\newtheorem{thm}{Theorem}[section]
\newtheorem{prop}[thm]{Proposition}
\newtheorem{lem}[thm]{Lemma}
\newtheorem{cor}[thm]{Corollary}
\theoremstyle{definition}
\newtheorem{defn}[thm]{Definition}
\newtheorem{rem}[thm]{Remark}
\newtheorem{ex}[thm]{Example}
\theoremstyle{definition}
\numberwithin{thm}{section}
\numberwithin{equation}{section}
\newcommand{\el}{\mbox{$\mathcal{L}$}}
\newcommand{\dee}{\mbox{$\mathcal{D}$}}
\newcommand{\ar}{\mbox{$\mathcal R$}}
\newcommand{\leqel}{\mbox{$\leq _{\mathcal L }$}}
\newcommand{\leqar}{\mbox{$\leq _{\mathcal R}$}}
\newcommand{\leqarq}{\mbox{$\leq _{\mathcal{R}}^Q$}}
\newcommand{\leqelqs}{\mbox{$\leq _{\mathcal{L},S}^Q$}}
\newcommand{\leqarqs}{\mbox{$\leq _{\mathcal{R},S}^Q$}}
\newcommand{\els}{\mbox{${\mathcal L}^{\ast}$}}
\newcommand{\ars}{\mbox{${\mathcal R}^{\ast}$}}
\newcommand{\dom}{\textup{dom}\,}
\newcommand{\g}{\gamma}
\renewcommand{\a}{\alpha}
\renewcommand{\b}{\beta}
\newcommand{\im}{\operatorname{im}}
\renewcommand{\dom}{\operatorname{dom}}
\begin{document}

\title{Inverse semigroups of left I-quotients}
\author{Nassraddin Ghroda}
\author{Victoria Gould}
\address{Department of Mathematics\\ University of York\\ Heslington
\\ York\\ YO10 5DD\\ UK}
\email{varg1@york.ac.uk}
\email{ng521@york.ac.uk}

\date{\today}
\keywords{ample semigroup, I-order, I-quotients,
inverse hull}
\begin{abstract} We examine, in a general setting,  a notion of inverse
semigroup of left
quotients, which we call {\em left I-quotients}. This concept has appeared, and has been used, as far back as Clifford's seminal work describing bisimple inverse
monoids in terms of their right unit subsemigroups. As a consequence of our approach, we find a straightforward way of extending Clifford's work to bisimple inverse semigroups (a step that has previously proved to be awkward).  We also put some earlier work on Gantos into a wider and clearer context, and  pave the way for further progress.
\end{abstract}
\maketitle

\section*{Introduction}\label{sec:intro}
The notion of quotient plays an important role in algebra.
As far as semigroup theory is concerned, it occurs in its
 simplest form as the concept of a {\em group} $G$ of left 
quotients of a subsemigroup $S$, which requires that
every $g\in G$ can be written as $g=a^{-1}b$ where
$a,b\in S$. A well known result of Ore and Dubreil
\cite{cliffordpreston} says that a semigroup $S$ has a group of 
left quotients if
and only if it is right reversible and cancellative, where {\em right
reversible}
means that for any $a,b\in S$, $Sa\cap Sb\neq \emptyset$. 
The notion of group of left quotients was extended to that of 
 semigroup of 
left quotients
 by Fountain and Petrich in \cite{fountainpetrich:1986}; this
idea has been
extensively developed by a number of authors.
If $Q$ is a semigroup of
quotients of a subsemigroup $S$, then every $q\in G$ can be written 
as $q=a^{\sharp}b$ where $a,b\in S$ and $a^{\sharp}$
is the inverse of $a$ {\em in a subgroup of } $Q$. It is 
a hard problem  to
obtain a general characterisation of those semigroups possessing 
a semigroup of left quotients.  The best results in this direction
 consider semigroups having a semigroup of left quotients {\em 
lying in a particular class}; the result of Ore and Dubreil being such
an example.

The object of this paper is to begin a systematic review of an
alternative, but very natural,
notion of quotient. The focus here will
be on inverse semigroups, and we aim to develop a concept of quotient
that
will utilise the natural involution that an inverse 
semigroup possesses.
For an element $a$ of an inverse semigroup
$Q$, $a^{-1}$ will always denote the inverse of $a$ in the sense of inverse
semigroup theory. If $a$ lies in a subgroup, then
$a^{\sharp}=a^{-1}$, but $a^{-1}$ may exist without $a$ lying in a
subgroup.

\begin{defn}\label{defn:leftiorder} Let $S$ be a subsemigroup of an
inverse
semigroup $Q$. Then $S$ is a {\em left I-order} in $Q$ and
$Q$ is a {\em semigroup of left I-quotients} of $S$ if every $q\in Q$ can
be written as $q=a^{-1}b$ where $a,b\in S$. 
\end{defn}

We stress that this notion is not new - it has been 
effectively defined by a number of
authors, without being made fully explicit. Perhaps the first time this
idea appeared was in \cite{clifford:1953}, an article that (in our
terminology) considered right
cancellative monoids as left I-orders in bisimple inverse monoids. 
The results of \cite{clifford:1953} are arrived at via explicit construction
of quotients from equivalence classes of ordered pairs of elements of
$S$.
An alternative approach uses inverse hulls of right cancellative
monoids, and was pursued (and taken further)
in, for example, \cite{nivat:1970,mcalister:1976} and
\cite{petrich:1987}.  For inverse semigroups that do not have an
identity, one cannot take the right unit subsemigroup as a natural right cancellative submonoid. To overcome this, Reilly \cite{reilly:1968}, introduced the notion of quotient of an RP-system, where an RP-system corresponds (according to his result) to an
$\ar$-class of a bisimple inverse semigroup $Q$. Reilly's approach is subsumed in Lawson's
use of category actions to construct inverse semigroups \cite{lawson:1999}.
It is easy to 
see (and we show this enroute in Section~\ref{sec:preliminaries}) 
that if $Q$ is a bisimple inverse semigroup 
and $R$ is an $\mathcal{R}$-class of $Q$,
then every element of $Q$ 
can be written as $a^{-1}b$ where $a$ and $b$ lie in $R$; 
from the above, $R$ is an
RP-system but will not, in general, be a subsemigroup. Nevertheless,
Reilly successfully used RP-systems to characterise congruences
on bisimple inverse semigroups \cite{reilly:1971}.
In a different direction, Gantos \cite{gantos:1971} extended the work of
Clifford
to semilattices of right cancellative monoids.

The above mentioned articles, and others, all consider
left I-orders in particular classes of semigroups. Here,
after Section~\ref{sec:preliminaries} of preliminaries, 
we begin in Section~\ref{sec:iorders} in a rather more abstract way, by asking
the natural questions that arise when one introduces a notion of 
quotient. For example, if $S$ is a left
$I$-order in $Q$, under what conditions is $a^{-1}b=c^{-1}d$
where $a,b,c,d\in S$? 
If $S$ possesses a semigroup of left I-quotients, when is this unique?
 We then apply our findings in a
number
of different ways. We first show that Brandt semigroups of
left I-quotients of a given semigroup $S$ are unique up to isomorphism, thus extending
the result of \cite{fountainpetrich:1985} that states that
Brandt semigroups of left quotients of a given $S$ are unique. 

In Section~\ref{sec:inversehulls} we focus on left ample semigroups.
A semigroup $S$ is left ample if and only if it embeds into an inverse
semigroup $Q$ in such a way that if $a\in S$, then $aa^{-1}\in S$. 
Right cancellative monoids are
precisely left ample semigroups possessing a single idempotent.
A left ample semigroup $S$  has a natural representation as partial one-one
maps of $S$, from which we can construct its inverse hull $\Sigma(S)$. We
find necessary and sufficient conditions for 
a left ample semigroup to be a left I-order in its inverse hull, namely
that
for any $a,b\in S$, $Sa\cap Sb=Sc$ for some $c\in S$; we call
this Condition (LC). Thus, (LC) is a rather stronger condition 
than being right reversible. 
Our result
corresponds exactly to that of Clifford for right cancellative monoids.

We divert a little in Section~\ref{sec:semilattices} to look at strong
semilattices  of left ample
semigroups with (LC). Let $S$ be a strong semilattice $Y$ of
left ample semigroups $S_{\alpha}, \alpha\in Y$,
such that each $S_{\alpha}$ has (LC);
using the general results of Section~\ref{sec:iorders}, we show that 
$S$ is a left I-order in $Q$, where $Q$ a strong semilattice $Y$ of the inverse hulls
$\Sigma(S_{\alpha})$ of the semigroups $S_{\alpha},\alpha\in Y$,
if and only if the connecting morphisms are {\em LC-preserving}, and
this is equivalent to $S$ having (LC). In this case, $Q$ is the inverse hull of $S$.
Part of our result extends that of \cite{gantos:1971} in a rather simple
way.

In the final section, we consider left I-orders in bisimple inverse
semigroups. Building on the work of Section~\ref{sec:inversehulls},
we define a category $\mathbf{LAC}$ of left ample semigroups 
with (LC), and the category $\mathbf{BIS}$ of bisimple inverse semigroups, and
show that $\mathbf{LAC}$ and $\mathbf{BIS}$ are equivalent. This result may be 
be specialised to show that the corresponding category of right 
cancellative monoids is equivalent to the category of bisimple inverse
monoids.

It will be useful in future work to make adjustments to Definition~\ref{defn:leftiorder}
in the case where $Q$ has a zero. To avoid complications in the current
paper
we make no further mention of this. 

\section{Preliminaries and inverse hulls}\label{sec:preliminaries}

For any semigroup $Q$ we
denote the quasi-orders associated with Green's
relations $\ar$ and $\el$ by $\leqar$ and $\leqel$,
respectively. To avoid ambiguity
we may use the superscript $Q$ to
indicate that a relation applies to $Q$, so that, for example,
$a\leqarq b$ if and only if $aQ^1\subseteq bQ^1$.

The relation $\ars$ is defined on a semigroup $S$ by the rule that for any
$a,b\in S$, $a\,\ars\, b$ in $S$ if and only if $a\,\ar\, b$ in some
oversemigroup
of $S$. The following alternative characterisation of $\ars$ is
well known.

\begin{lem}\label{lem:ars} The following are equivalent for elements
$a,b$ of a semigroup $S$:

(i) $a\,\ars\, b$;

(ii) for all $x,y\in S^1$,
\[xa=ya
\mbox{ if and only if }xb=yb.\]\end{lem}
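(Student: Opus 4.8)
The plan is to prove the two implications separately, the forward one being routine and the converse carrying the real content. For (i) $\Rightarrow$ (ii) I would start from $a\,\ar\, b$ in some oversemigroup $T$ of $S$. Then $aT^1=bT^1$ supplies elements $u,v\in T^1$ with $b=au$ and $a=bv$. Since $S^1\subseteq T^1$, for any $x,y\in S^1$ the equality $xa=ya$ gives $xb=x(au)=(xa)u=(ya)u=yb$, and symmetrically $xb=yb$ gives $xa=ya$ via $v$; this is precisely condition (ii).

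The substantive direction is (ii) $\Rightarrow$ (i), where I must exhibit an oversemigroup in which $a$ and $b$ become $\ar$-related. My approach is to use the right regular representation of $S^1$ on itself. Let $\mathcal{T}(S^1)$ denote the full transformation monoid of the set $S^1$, with maps written on the right and composed left to right, and for each $s\in S^1$ define $\rho_s\colon S^1\to S^1$ by $x\rho_s=xs$. A direct check gives $\rho_{st}=\rho_s\rho_t$, so $s\mapsto\rho_s$ is a homomorphism, and it is injective since $1\rho_s=s$ separates elements; hence $S$ embeds in $\mathcal{T}(S^1)$, which is therefore an oversemigroup of $S$ (this is in effect the same faithful representation that we will use to build inverse hulls later).

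It then remains to show $\rho_a\,\ar\,\rho_b$ in $\mathcal{T}(S^1)$. The key fact I would invoke is the standard description of $\ar$ on a full transformation monoid: two maps are $\ar$-related if and only if they have equal kernels, where the kernel of $f$ is the equivalence $\{(x,y):xf=yf\}$. Now the kernel of $\rho_a$ is $\{(x,y)\in S^1\times S^1:xa=ya\}$, and condition (ii) asserts exactly that this coincides with the kernel of $\rho_b$; hence $\rho_a\,\ar\,\rho_b$, which says $a\,\ars\,b$. The main point to get right is the kernel characterisation of $\ar$ in $\mathcal{T}(S^1)$ — one direction requires defining a map on the image of $\rho_a$ by $(x\rho_a)\mapsto x\rho_b$, which is well defined precisely because the kernels agree, and extending it arbitrarily — together with verifying that $\rho$ genuinely separates the points of $S$ so that $\mathcal{T}(S^1)$ really is an oversemigroup.
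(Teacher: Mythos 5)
Your proof is correct. The paper itself gives no argument for this lemma (it is stated as ``well known''), so there is nothing internal to compare against; your two-step argument --- the routine transfer of right multipliers $u,v\in T^1$ for (i) $\Rightarrow$ (ii), and for (ii) $\Rightarrow$ (i) the faithful right regular representation $s\mapsto\rho_s$ of $S^1$ in the full transformation monoid on $S^1$ combined with the kernel characterisation of $\ar$ there --- is precisely the standard proof of this folklore result, with the only implicit step (identifying $S$ with its isomorphic copy $\{\rho_s : s\in S\}$ so that the transformation monoid counts as an oversemigroup) being harmless.
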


It is easy to see that $\ars$ is a left congruence,
$\ar\subseteq\ars$
and  $\ar=\ars$  if $S$ is regular. In general, however,
the 
inclusion can be  strict.

In a semigroup with commuting idempotents,  it is clear that any 
$\ars$-class contains at most one idempotent.
Where it exists we denote the (unique)
idempotent  in the $\ars$-class
of $a$ by $a^+$.
 If {\em every} $\ars$-class
contains an idempotent, $^+$ is then a unary operation on $S$ and
we may regard $S$ as an algebra of type $(2,1)$; as
such, morphisms must preserve the unary operation of $^+$ (and hence
the relation $\ars$). We may refer
to such morphisms as `$(2,1)$-morphisms' if there is danger
of ambiguity.  Of course, any
semigroup isomorphism must preserve $^+$.
We remark here that if $S$ is inverse, then
$a^+=aa^{-1}$ for all $a\in S$.

\begin{defn} A semigroup $S$ is
{\em left ample} if $E(S)$ is a semilattice, every $\ars$-class contains
a (necessarily unique) idempotent $a^+$ and
the {\em left ample} identity (AL) holds:
\[xy^+=(xy^+)^+x\,\,\,\,\,\,\,\mbox{(AL)}.\]
\end{defn}

It is easy to see that a semigroup is left ample and
unipotent (that is, it contains exactly one idempotent) if and only if
it a right cancellative monoid.

\begin{rem}\label{rem:qv1} The class of left 
ample semigroups forms a
quasi-variety of algebras of type $(2,1)$.
\end{rem}

Since any inverse semigroup is left ample, any subsemigroup of
such that is closed under $^+$ must therefore be left ample.
The converse is also true: left ample semigroups are (up to
isomorphism),
precisely
the submonoids of (symmetric) inverse semigroups closed under $^+$. As the
representation we use is needed in some later sections, we give a
brief outline. Further details can be found in, for example, 
 \cite{gould:notes}. It is useful to recall that in a symmetric inverse semigroup
 $\mathcal{I}_X$ we have 
 $\alpha\,\ar\, \beta\mbox{ if and only if }\dom\alpha=\dom\beta$,
 and
  $\alpha\,\el\, \beta\mbox{ if and only if }\im\alpha=\im\beta.$

Let $S$ be left ample. We construct an embedding of $S$ into
the symmetric inverse semigroup $\mathcal{I}_S$ as follows. For
each $a\in S$ we let $\rho_a\in \mathcal{I}_S$ be given by 
\[\dom \rho_a=Sa^+\mbox{ and }\im \rho_a=Sa\]
and for any $x\in \dom \rho_a$.
\[x\rho_a=xa.\]
Then the map $\theta_S:S\rightarrow \mathcal{I}_S$ is a $(2,1)$-embedding.

\begin{defn}\label{defn:inversehull} Let $S$ be a left ample semigroup.
Then the {\em inverse hull} $\Sigma(S)$ of $S$ is the inverse subsemigroup 
of $\mathcal{I}_S$ generated by $\im \theta_S$.
\end{defn}

We pause to consider a special case. Let $M$ be a right cancellative
monoid. Then for any $a\in M$, we have that $\rho_a:M\rightarrow Ma$, so
that $\dom \rho_a=M=\dom I_M$, giving that $\im \theta_S\subseteq R_1$, where
$R_1$ is the $\ar$-class of $I_M$ in $\mathcal{I}_M$.

\section{Left I-orders}\label{sec:iorders}

The classical notion of quotients in \cite{fountainpetrich:1986},
developed in a number of further articles, tells us that if we want 
close
relationship between a left order and its semigroup of left quotients,
then we may need to insist that the left order be {\em straight}
\cite{gould:2003}. Extrapolating this idea gives us the following:

\begin{defn}\label{defn:straight} Let $S$ be a left I-order in $Q$. Then
$S$ is {\em straight} in $Q$ if every $q\in Q$ can be written as
$q=a^{-1}b$ where $a,b\in S$ and $a\,\ar\, b$ in $Q$.
\end{defn}

If $S$ is a left I-order in $Q$ and $S$ has a right identity, then
this must be an identity of $Q$ (and hence of $S$).  
For any $q\in Q$ we have that
$q=a^{-1}b$ where $a,b\in S$, so that
\[qe=a^{-1}be=a^{-1}b=q\]
and
\[eq=ea^{-1}b=(ae)^{-1}b=a^{-1}b=q.\]

We remark that if a semigroup $S$ is a left order in $Q$ in the sense of
\cite{fountainpetrich:1986}, then it is certainly a left I-order. For, if
$S$ is a left order in $Q$, then we insist that any $q\in Q$ can
be written as $q=a^{\sharp}b$ where $a,b\in S$ and
$a^{\sharp}$ is the inverse of $a$ {\em in a subgroup of }
$S$.
Our notion of left I-order is more general, as we now
demonstrate with an easy example. 

\begin{ex}\label{ex:bicyclic} Let $B$ be the Bicyclic Semigroup and let $S=R_{(0,0)}$, the
$\mathcal{R}$-class of the identity. It is clear that $S$ is
a subsemigroup of $B$. For any $(a,b)\in B$ we have that
\[(a,b)=(a,0)(0,b)=(0,a)^{-1}(0,b)\]
so that $S$ is  a left I-order in $B$. On the other hand, the only
element of $S$ lying in a subgroup is $(0,0)$ and
$(0,0)^{\sharp}(0,n)=(0,n)$, for any $(0,n)\in S$. Thus
$S$ is not a left order in $B$.
\end{ex}

The fact that $R_{(0,0)}$ is a left I-order in $B$ is a very special 
case of the result of \cite{clifford:1953}
 mentioned in the Introduction, which we shall revisit. The semigroup
$B$ is bisimple and we shall see that bisimple inverse semigroups play
an important role in this theory. Suppose that $Q$ is bisimple and we
pick an $\ar$-class $R=R_e$ of $Q$, where $e\in E(S)$. Let $q\in Q$. As
$Q$ is bisimple we can find mutually inverse elements
$x,x^{-1}\in Q$   such that 
$xx^{-1}=e$ and $x^{-1}x=qq^{-1}$.
Then $q=x^{-1}xq$ and $xq\,\ar\, xqq^{-1}=x\,\ar\, e$. Thus
any element of $Q$ can be written as a quotient of elements
{\em chosen from any $\ar$-class}.

For an example of a different flavour, we present the following. For
later
purposes, it is useful to recall that if $B$ is a Brandt semigroup
and $a,e\in B$ with $e=e^2$, then $ea\neq 0\, (ae\neq 0)$ implies that
$ea=a\, (ae=a)$.

\begin{ex}\label{ex:brandt} Let $H$ be a left order in a group $G$, and let $\mathcal{B}^0=\mathcal{B}^0(G,I)$ be a Brandt semigroup over  $G$
where $|I|\geq 2$. Fix $i\in I$ and let \[S_i=\{ (i,h,j):h\in H, j\in I\}\cup \{ 0\}.\] Then $S_i$ is a straight left I-order in $\mathcal{B}^0$.

To see this, notice that $S_i$ is a subsemigroup, $0=0^{-1}0$, and for any
$(j,g,k)\in \mathcal{B}^0$, we may write $g=a^{-1}b$ where $a,b\in H$ and then
\[(j,g,k)=(i,a,j)^{-1}(i,b,k)\]
where $(i,a,j), (i,b,k)\in S_i$. 

Again, it is easy to see that $S_i$ is not a left order in $\mathcal{B}^0$.
\end{ex}

Notice that if $S$ is  a left I-order in $Q$ and
$a,b\in S$ with $a\,\ar\, b$ in $Q$, then $a^{-1}\,\ar\, a^{-1}b\,\el\, b$
in $Q$, so that
if $S$ is straight in $Q$, then {\em $S$ intersects every $\el$-class of
$Q$. }

In this initial article we will be primarily interested in left ample semigroups
that are
left I-orders. In such cases {\em the relation $\ars$ will always refer
to the left I-order}.

\begin{lem}\label{lem:amplestraight} Let $S$ be a left ample semigroup,
embedded (as a $(2,1)$-algebra) in an inverse semigroup $Q$. If
$S$ is a left I-order in $Q$, then $S$ is straight. 
\end{lem}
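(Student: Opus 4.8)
The plan is to start from an arbitrary factorisation $q=a^{-1}b$ with $a,b\in S$, which exists because $S$ is a left I-order in $Q$, and to replace the pair $(a,b)$ by a new pair $(c,d)$ of elements of $S$ that yields the same quotient but for which $c\,\ar\, d$ in $Q$. Since $Q$ is inverse, $c\,\ar\, d$ amounts to $cc^{-1}=dd^{-1}$, so the real content is to arrange for the two left factors to share a common idempotent $cc^{-1}=dd^{-1}$ while leaving the quotient unchanged.

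The key idea is to symmetrise using the idempotent $e=a^+b^+$, where $a^+=aa^{-1}$ and $b^+=bb^{-1}$ are computed in $Q$. First I would observe that $e$ lies in $S$: because $S$ is embedded as a $(2,1)$-algebra, the images of $a^+,b^+\in E(S)$ are exactly $aa^{-1}$ and $bb^{-1}$, and since $S$ is left ample, $E(S)$ is a semilattice contained in $S$, so $e=a^+b^+\in E(S)\subseteq S$. Next I would set $c=ea$ and $d=eb$, both of which lie in $S$. A short computation in $Q$, using only $a^{-1}aa^{-1}=a^{-1}$, $bb^{-1}b=b$ and the fact that $e$ is a self-inverse idempotent, gives $c^{-1}d=(ea)^{-1}(eb)=a^{-1}e\,eb=a^{-1}eb=a^{-1}b=q$, so the new pair reproduces $q$.

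It then remains to check the $\ar$-condition. Here I would compute $cc^{-1}=ea\,a^{-1}e=ea^+e=a^+b^+a^+=a^+b^+=e$ and, symmetrically, $dd^{-1}=eb\,b^{-1}e=eb^+e=e$, using that idempotents commute in $Q$. Hence $cc^{-1}=dd^{-1}=e$, so $c\,\ar\, d$ in $Q$ and $q=c^{-1}d$ is a straight factorisation. I expect the only delicate point to be the first one, namely verifying that the correcting idempotent $e$ genuinely belongs to $S$; this is precisely where left ampleness (guaranteeing that the idempotents of $S$ are available and form a semilattice) and the $(2,1)$-nature of the embedding (so that $a^+$ really is $aa^{-1}$ in $Q$) are both needed. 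The remaining manipulations are routine inverse-semigroup arithmetic.
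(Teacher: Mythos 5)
Your proof is correct and is essentially the same as the paper's: since idempotents commute and $a^+a=a$, $b^+b=b$, your pair $(ea,eb)=(a^+b^+a,\,a^+b^+b)$ simplifies to $(b^+a,\,a^+b)$, which is exactly the factorisation $q=(b^+a)^{-1}(a^+b)$ that the paper uses. The only cosmetic difference is in how the $\mathcal{R}$-relation is checked: you compute $cc^{-1}=dd^{-1}=e$ directly, whereas the paper observes that both elements are $\mathcal{R}^*$-related (hence $\mathcal{R}$-related in $Q$) to the common idempotent $a^+b^+=b^+a^+$.
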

\begin{proof} Let $q=a^{-1}b\in Q$ where $a,b\in S$. Then
\[q=(a^+a)^{-1}(b^+b)=a^{-1}a^+b^+b=a^{-1}b^+a^+b=(b^+a)^{-1}(a^+b).\]
We have
\[a^+b\,\ars\, a^+b^+=b^+a^+\,\ars\, b^+a\]
and so $a^+b\,\ar\, b^+a$ in $Q$ and $S$ is straight.
\end{proof}

We know from the classical case that a semigroup may be a left I-order in non-isomorphic
semigroups of left I-quotients (see, for example, 
\cite{easdowngould:1996}).
For the remainder of this section we concentrate on determining when two
semigroups of straight left I-quotients of a given semigroup are
isomorphic. More generally we introduce the following notion.

\begin{defn}\label{defn:lifting} Let $S$ be a subsemigroup of $Q$ and let $\phi:S\rightarrow
P$ be a morphism from $S$ to a semigroup $P$. If there
is a morphism $\overline{\phi}:Q\rightarrow P$ such that
$\overline{\phi}|_S=\phi$, then we say that
$\phi$ {\em lifts} to  domain $Q$ and $\phi$ {\em lifts} to $\overline{\phi}$. If $\phi$ lifts to an isomorphism, then
we say that $Q$ and $P$ are {\em isomorphic over $S$}.
\end{defn}

To achieve our goal, we must first examine when two quotients
$a^{-1}b$ and $c^{-1}d$ are equal,
where $a,b,c,d\in S$ and $S$ is a left I-order in $Q$. Notice that if
$a\,\ar^Q\, b$ and $c\,\ar^Q\, d$ and $a^{-1}b=c^{-1}d$, then
an earlier remark gives that
$a\,\el^Q\, c$ and $b\,\el^Q\, d$. In 
Lemma~\ref{lem:equality} below, we give
conditions on $S$ such that $a^{-1}b=c^{-1}d$; the use of Green's
relations
in $Q$ in our conditions will be `internalised' to $S$ at a later point.
First, a preliminary remark that, given its usefulness, more than merits
the name of lemma.

\begin{lem}\label{lem:nassersuperlemma} Let $b,c,x,y$
be elements of an inverse semigroup $Q$ such that
$x\,\ar\, y$. If $bc^{-1}=x^{-1}y$, then $xb=yc$. 
\end{lem}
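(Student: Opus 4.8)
The plan is to argue entirely inside the inverse semigroup $Q$, using the standard description of Green's relation $\ar$: for $x,y\in Q$ we have $x\,\ar\, y$ if and only if $xx^{-1}=yy^{-1}$. The strategy is to extract from the single hypothesis $bc^{-1}=x^{-1}y$ two complementary one-sided identities, one obtained by multiplying on the left and one by multiplying on the right, and then to combine them, absorbing the leftover idempotents using the fact that idempotents commute in an inverse semigroup.

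First I would multiply $bc^{-1}=x^{-1}y$ on the left by $x$. Using $xx^{-1}=yy^{-1}$ and $yy^{-1}y=y$ this gives
\[xbc^{-1}=xx^{-1}y=yy^{-1}y=y,\]
and multiplying on the right by $c$ yields
\[xbc^{-1}c=yc.\]

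Next I would multiply $bc^{-1}=x^{-1}y$ on the right by $y^{-1}$. Using $yy^{-1}=xx^{-1}$ and $x^{-1}xx^{-1}=x^{-1}$ this gives
\[bc^{-1}y^{-1}=x^{-1}yy^{-1}=x^{-1}xx^{-1}=x^{-1};\]
taking inverses gives $ycb^{-1}=x$, and multiplying on the right by $b$ gives
\[ycb^{-1}b=xb.\]

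Finally I would combine the two displayed conclusions. Substituting $yc=xbc^{-1}c$ into $xb=ycb^{-1}b$ produces $xb=xbc^{-1}cb^{-1}b$; since the idempotents $c^{-1}c$ and $b^{-1}b$ commute and $xbb^{-1}b=xb$, the trailing idempotents collapse to give $xb=xbc^{-1}c$, whence $xb=yc$ by the first identity. The main obstacle is precisely this last step: neither one-sided identity alone removes the spurious idempotent ($c^{-1}c$ in the first, $b^{-1}b$ in the second), and the identity $xbc^{-1}=y$ on its own does not force $xbc^{-1}c=xb$. The point is that the $\ar$-hypothesis must be used twice, once on each side, and it is the interplay of the two resulting identities --- together with commutativity of idempotents --- that cancels both idempotents simultaneously.
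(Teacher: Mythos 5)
Your proof is correct; every step is valid in an arbitrary inverse semigroup, and the final absorption $xbc^{-1}cb^{-1}b=xb\,(b^{-1}b)(c^{-1}c)=xbc^{-1}c$ is a legitimate use of commuting idempotents. It is close in spirit to the paper's argument but organizes the work differently. The paper's pivot is the single computation
\[
bc^{-1}cb^{-1}=(bc^{-1})(bc^{-1})^{-1}=(x^{-1}y)(x^{-1}y)^{-1}=x^{-1}yy^{-1}x=x^{-1}x,
\]
which equates the idempotent $qq^{-1}$ computed from the two expressions for $q=bc^{-1}=x^{-1}y$; inserting $b=bb^{-1}b$ and commuting idempotents then gives $bc^{-1}c=bc^{-1}cb^{-1}b=x^{-1}xb$, whence $xbc^{-1}c=xb$, and the conclusion follows from $y=xbc^{-1}$ exactly as in your last line. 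You instead extract the two dual one-sided identities $xbc^{-1}=y$ and $ycb^{-1}=x$ (the second by right-multiplying by $y^{-1}$ and taking inverses) and obtain the same absorption $xb=xbc^{-1}c$ by substituting one into the other. The toolkits are identical --- the hypothesis in the form $xx^{-1}=yy^{-1}$, the inverse axioms, and commutativity of idempotents, each invoked the same number of times --- so neither route is more elementary; yours is a touch more symmetric in its treatment of $x$ and $y$, while the paper's idempotent computation isolates in one line precisely the obstruction (removing the spurious $c^{-1}c$) that you diagnose at the end of your write-up.
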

\begin{proof} We have that
\[bc^{-1}cb^{-1}=(bc^{-1})(bc^{-1})^{-1}=(x^{-1}y)(x^{-1}y)^{-1}=
x^{-1}yy^{-1}x=x^{-1}x\]
as $x\,\ar\, y$. Hence
\[bc^{-1}c=bb^{-1}bc^{-1}c=bc^{-1}cb^{-1}b=x^{-1}xb\]
and so $xbc^{-1}c=xb$. From $y=xbc^{-1}$ we have
\[xb=xbc^{-1}c=yc.\]
\end{proof}

\begin{lem}\label{lem:equality} Let $S$ be a straight left I-order in
$Q$. Let $a,b,c,d\in S$ with $a\,\ar^Q\, b$ and $c\,\ar^Q\, d$ in $Q$.
Then $a^{-1}b=c^{-1}d$ if and only if
there exist $x,y\in S$ with $xa=yc$ and
$xb=yd$ and such that $a\,\ar^Q\, x^{-1}$,
$x\,\ar^Q\, y$ and $y\,\el^Q\, c^{-1}$ in $Q$. 

\end{lem}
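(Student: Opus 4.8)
The plan is to prove the two implications separately, and I expect the converse (the ``if'' direction) to be the quick one, so I would dispatch it first by a direct computation inside $Q$. Assume we are given $x,y\in S$ with $xa=yc$, $xb=yd$, and $a\,\ar^Q\, x^{-1}$, $x\,\ar^Q\, y$, $y\,\el^Q\, c^{-1}$. The three Green's conditions translate into idempotent equalities via the standard formulas for $\ar$ and $\el$ in an inverse semigroup: $a\,\ar^Q\,x^{-1}$ gives $aa^{-1}=x^{-1}x$, and $y\,\el^Q\,c^{-1}$ gives $y^{-1}y=cc^{-1}$. Taking inverses in $xa=yc$ yields $a^{-1}x^{-1}=c^{-1}y^{-1}$, and then the single chain
\[a^{-1}b=a^{-1}(x^{-1}x)b=(a^{-1}x^{-1})(xb)=(c^{-1}y^{-1})(yd)=c^{-1}(y^{-1}y)d=c^{-1}d\]
does everything: the first equality uses $x^{-1}x=aa^{-1}$ (so $a^{-1}x^{-1}x=a^{-1}$), the third substitutes $a^{-1}x^{-1}=c^{-1}y^{-1}$ and $xb=yd$, and the last uses $y^{-1}y=cc^{-1}$ (so $c^{-1}y^{-1}y=c^{-1}$). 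Notice this direction does not even need $x\,\ar^Q\,y$.

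For the forward direction I would start from $a^{-1}b=c^{-1}d=:q$ and first record the consequences that the hypotheses $a\,\ar^Q\,b$ and $c\,\ar^Q\,d$ (that is, $aa^{-1}=bb^{-1}$ and $cc^{-1}=dd^{-1}$) force on $q$: computing $qq^{-1}$ and $q^{-1}q$ two ways gives $a^{-1}a=c^{-1}c$ and $b^{-1}b=d^{-1}d$. These equalities will be exactly what is needed to certify the Green's relations later. The crux is the \emph{choice} of $x$ and $y$: I would apply straightness of $S$ to the element $ac^{-1}\in Q$, writing $ac^{-1}=x^{-1}y$ with $x,y\in S$ and $x\,\ar^Q\,y$ in $Q$. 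This immediately secures $x\,\ar^Q\,y$, and feeding $ac^{-1}=x^{-1}y$ into Lemma~\ref{lem:nassersuperlemma} (with its $b,c$ instantiated to our $a,c$) delivers the first desired equation $xa=yc$ for free.

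It then remains to verify $xb=yd$ together with the two relations $a\,\ar^Q\,x^{-1}$ and $y\,\el^Q\,c^{-1}$, all by routine manipulation of $ac^{-1}=x^{-1}y$. From $x(ac^{-1})=xx^{-1}y=yy^{-1}y=y$ I get the handy identity $y=xac^{-1}$, whence $yd=xa(c^{-1}d)=xaq=xa\,a^{-1}b=x(aa^{-1})b=x(bb^{-1})b=xb$, using $aa^{-1}=bb^{-1}$; this gives the second equation. For the relations I compute $x^{-1}x=(x^{-1}y)(x^{-1}y)^{-1}=a(c^{-1}c)a^{-1}$ and $y^{-1}y=(x^{-1}y)^{-1}(x^{-1}y)=c(a^{-1}a)c^{-1}$, and then substitute $c^{-1}c=a^{-1}a$ to collapse these to $x^{-1}x=aa^{-1}$ (i.e.\ $a\,\ar^Q\,x^{-1}$) and $y^{-1}y=cc^{-1}$ (i.e.\ $y\,\el^Q\,c^{-1}$).

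The main obstacle, and the one genuinely clever step, is recognising that one should apply straightness to $ac^{-1}$ rather than to $q$ itself; once $x^{-1}y$ is produced in that form, Lemma~\ref{lem:nassersuperlemma} and the preliminary equalities $a^{-1}a=c^{-1}c$, $b^{-1}b=d^{-1}d$ make every remaining verification mechanical. I would keep an eye only on making sure the two-way computations of $x^{-1}x$ and $y^{-1}y$ use $x\,\ar^Q\,y$ correctly (so that $xx^{-1}=yy^{-1}$ can be cancelled), which is where that hypothesis earns its place.
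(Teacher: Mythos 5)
Your proof is correct and follows essentially the same route as the paper's: straightness is applied to $ac^{-1}$ to produce $x^{-1}y$ with $x\,\ar^Q\,y$, Lemma~\ref{lem:nassersuperlemma} then yields $xa=yc$, and the remaining equalities and Green's relations are verified by the same kind of idempotent manipulations (via $a^{-1}a=c^{-1}c$ and $b^{-1}b=d^{-1}d$). The only minor difference is that your ``if''-direction computation is a little more economical than the paper's, as it avoids using $x\,\ar^Q\,y$ and $a\,\ar^Q\,b$, which the paper's version invokes to recover $a=x^{-1}yc$ and $b=x^{-1}yd$.
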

\begin{proof} Suppose first that $a^{-1}b=c^{-1}d$.
Then as above,
$b\,\el^Q\, d$  and $a\,\el^Q\, c$. Let $x,y\in S$ be such that $ac^{-1}=
x^{-1}y$ and $x\,\ar^Q\, y$. Then
\[b=ac^{-1}d=x^{-1}yd\]
so that $xb=yd$. From Lemma~\ref{lem:nassersuperlemma},
$xa=yc$. Also,
\[a\,\ar^Q\, aa^{-1}\,\ar^Q\, ac^{-1}=x^{-1}y\,\ar^Q\, x^{-1},\]
and 
\[y=xac^{-1}\,\el^Q\, a^{-1}ac^{-1}=c^{-1}cc^{-1}=c^{-1}\]
as required.

Conversely, if $xa=yc$, $xb=yd$ for some $x,y\in S$ with
$x\,\ar^Q\, y$, $a\,\ar^Q\, x^{-1}$ and $y\,\el^Q\, c^{-1}$, then
$a=x^{-1}yc, b=x^{-1}yd$ and 
\[\begin{array}{rcll}
a^{-1}b&=&(x^{-1}yc)^{-1}(x^{-1}yd)\\
&=&c^{-1}y^{-1}xx^{-1}yd\\
&=&c^{-1}y^{-1}yd&\mbox{ as }x\,\ar^Q\, y\\
&=&c^{-1}cc^{-1}d&\mbox{ as }y\,\el^Q\, c^{-1}\\
&=&c^{-1}d.\end{array}\]
\end{proof}

Let $S$ be a subsemigroup of an inverse semigroup
$Q$. We use Green's relations on $Q$ to 
define binary relations $\leqarqs,\ar^Q_S,
\leqelqs$ and $\el^Q_S$ and a ternary
relation
$\mathcal{T}^Q_S$ on $S$ by the rules that:
\[\leqarqs=\leq_{\mathcal{R}}^Q\cap (S\times S)\mbox{ and }\leqelqs
=\leq_{\mathcal{L}}^Q\cap 
(S\times
S),\]
so that $\leqarqs$ and $\leqelqs$ are, respectively, left and right
compatible quasi-orders. We then define $\ar^Q_S$ and
$\el^Q_S$ to be the associated equivalence relations, so that
\[\ar^Q_S=\ar^Q\cap (S\times S)\mbox{ and } \el^Q_S=\el^Q\cap (S\times S).\]
Consequently, $\ar^Q_S$ and $\el^Q_S$ are left and right compatible.
We define $\mathcal{T}^Q_S$ by the rule that for any $a,b,c\in S$,
\[(a,b,c)\in\mathcal{T}^Q_S\mbox{ if and only if }ab^{-1}Q\subseteq
c^{-1}Q.\]

\begin{lem}\label{lem:ttoleqar} Let $S$ and $T$ be subsemigroup
 of inverse
semigroups
$Q$ and $P$ respectively, and let $\phi:S\rightarrow T$
be a morphism. If
for all $a,b,c\in S$,
\[(a,b,c)\in \mathcal{T}^Q_S\Rightarrow (a\phi,b\phi,c\phi)\in 
\mathcal{T}^P_{T},\]
then for all $u,v\in S$,
\[u\leq_{\mathcal{R}}^Q v^{-1}\Rightarrow 
u\phi\leq_{\mathcal{R}}^P (v\phi)^{-1}.\]
\end{lem}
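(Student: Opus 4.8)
The plan is to recognise that both the premise $u\leq_{\mathcal{R}}^Q v^{-1}$ and the desired conclusion are nothing more than disguised instances of the ternary relation $\mathcal{T}$, so that the assumed implication can be applied essentially verbatim. Concretely, I would first prove the bridging claim that, for any $u,v\in S$,
\[(u,u,v)\in\mathcal{T}^Q_S\iff u\leq_{\mathcal{R}}^Q v^{-1},\]
and then simply read off the result.

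To establish this claim I would unwind the definition of $\mathcal{T}^Q_S$: the triple $(u,u,v)$ lies in $\mathcal{T}^Q_S$ precisely when $uu^{-1}Q\subseteq v^{-1}Q$. The first routine ingredient is the standard inverse-semigroup fact that $u\,\ar\, uu^{-1}$ in $Q$, which gives $uu^{-1}Q=uQ$ (from $uu^{-1}u=u$ one inclusion, and from $uu^{-1}q=u(u^{-1}q)$ the other). The second is that in an inverse semigroup every principal right ideal collapses, $aQ^1=aQ$, because $a=a(a^{-1}a)\in aQ$; applying this to $u$ and to $v^{-1}$ shows that $u\leq_{\mathcal{R}}^Q v^{-1}$, which by definition is $uQ^1\subseteq v^{-1}Q^1$, amounts to $uQ\subseteq v^{-1}Q$. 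Chaining these, $(u,u,v)\in\mathcal{T}^Q_S\iff uu^{-1}Q\subseteq v^{-1}Q\iff uQ\subseteq v^{-1}Q\iff u\leq_{\mathcal{R}}^Q v^{-1}$. Since $P$ is also inverse and $T$ is a subsemigroup of $P$, the identical argument yields $(u\phi,u\phi,v\phi)\in\mathcal{T}^P_T\iff u\phi\leq_{\mathcal{R}}^P (v\phi)^{-1}$.

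With the two equivalences in place the lemma falls out as a three-link chain: assuming $u\leq_{\mathcal{R}}^Q v^{-1}$, the $Q$-equivalence gives $(u,u,v)\in\mathcal{T}^Q_S$; the hypothesis of the lemma then gives $(u\phi,u\phi,v\phi)\in\mathcal{T}^P_T$; and the $P$-equivalence finally delivers $u\phi\leq_{\mathcal{R}}^P (v\phi)^{-1}$. As the whole argument is a sequence of definitional reductions together with elementary facts about principal right ideals in inverse semigroups, I anticipate no real difficulty. The one genuinely creative step, and the crux of the proof, is spotting that the diagonal specialisation $(a,b,c)=(u,u,v)$ is exactly what converts an $\mathcal{R}$-order statement into a $\mathcal{T}$-statement; once that is seen, everything else is bookkeeping.
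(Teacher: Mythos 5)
Your proof is correct and follows essentially the same route as the paper's: both hinge on the diagonal specialisation $(u,u,v)$ of the ternary relation together with the identity $uu^{-1}Q=uQ$ (coming from $u\,\ar\,uu^{-1}$), applied once in $Q$ and once in $P$. The only cosmetic difference is that you package this as a two-way equivalence between $(u,u,v)\in\mathcal{T}^Q_S$ and $u\leq_{\mathcal{R}}^Q v^{-1}$, whereas the paper uses just the one direction needed on each side.
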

\begin{proof} Suppose that $u,v\in S$ and $uQ\subseteq v^{-1}Q$.
Then $uu^{-1}Q\subseteq v^{-1}Q$, so that $(u,u,v)\in \mathcal{T}^Q_S$.
By assumption, $(u\phi, u\phi,v\phi)\in \mathcal{T}^P_{T}$,
so that
\[u\phi P=u\phi(u\phi)^{-1}P\subseteq (v\phi)^{-1}P\]
and $u\phi \leq_{\mathcal{R}}^P (v\phi)^{-1}$ as required.
\end{proof}

We use the relation $\mathcal{T}^Q_S$ to prove our rather general result
below.
{\em As in the classical case, $\mathcal{T}^Q_S$ can be avoided in some special
cases of interest.}

\begin{thm}\label{thm:homs} Let $S$ be a straight left I-order in
$Q$ and let $T$ be a subsemigroup of an inverse
semigroup $P$. Suppose that $\phi:S\rightarrow T$
is a morphism. Then $\phi$
lifts
to a (unique) morphism $\overline{\phi}:Q\rightarrow P$ if and only if
for all $(a,b,c)\in S$:

(i) $(a,b)\in\ar^Q_S\Rightarrow (a\phi,b\phi)\in \ar^P_{T}$;

(ii) $(a,b,c)\in \mathcal{T}^Q_S\Rightarrow (a\phi,b\phi,c\phi)\in 
\mathcal{T}^P_{T}$.

If (i) and (ii) hold and $S\phi$ is a left I-order in $P$,
then $\overline{\phi}:Q\rightarrow P$ is onto. 
\end{thm}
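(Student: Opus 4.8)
The plan is to prove both implications, the bulk of the work lying in the ``if'' direction. For necessity, suppose $\phi$ lifts to a morphism $\overline{\phi}\colon Q\to P$. As $Q$ and $P$ are inverse, $\overline{\phi}$ preserves inverses, and any semigroup morphism preserves $\leq_{\mathcal{R}}$. Hence, if $(a,b)\in\ar^Q_S$ then $a\,\ar^Q\,b$ gives $a\phi\,\ar^P\,b\phi$ with $a\phi,b\phi\in T$, which is (i); and if $(a,b,c)\in\mathcal{T}^Q_S$, that is $ab^{-1}\leq_{\mathcal{R}}^Q c^{-1}$, then $(a\phi)(b\phi)^{-1}\leq_{\mathcal{R}}^P(c\phi)^{-1}$, which is (ii). For the converse, since $S$ is straight every $q\in Q$ has a representation $q=a^{-1}b$ with $a,b\in S$ and $a\,\ar^Q\,b$, and I would define $\overline{\phi}(a^{-1}b)=(a\phi)^{-1}(b\phi)$. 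This formula is forced, since any lift must send $a^{-1}b$ to $(a\overline{\phi})^{-1}(b\overline{\phi})=(a\phi)^{-1}(b\phi)$; this simultaneously disposes of uniqueness.

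The crux is that $\overline{\phi}$ is well defined. Suppose $a^{-1}b=c^{-1}d$ are two straight representations. Lemma~\ref{lem:equality} supplies $x,y\in S$ with $xa=yc$, $xb=yd$, $a\,\ar^Q\,x^{-1}$, $x\,\ar^Q\,y$ and $y\,\el^Q\,c^{-1}$. The point is to transport these (partly inverse-involving) conditions into $P$. Condition (i) turns $x\,\ar^Q\,y$ into $x\phi\,\ar^P\,y\phi$ and $a\,\ar^Q\,b$ into $a\phi\,\ar^P\,b\phi$. The inverse-involving conditions are handled by Lemma~\ref{lem:ttoleqar} (precisely condition (ii) in usable form): from $a\leq_{\mathcal{R}}^Q x^{-1}$ we obtain $a\phi\leq_{\mathcal{R}}^P(x\phi)^{-1}$, and, rewriting $y\,\el^Q\,c^{-1}$ as $c\,\ar^Q\,y^{-1}$ by inverse-duality, from $c\leq_{\mathcal{R}}^Q y^{-1}$ we obtain $c\phi\leq_{\mathcal{R}}^P(y\phi)^{-1}$. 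Writing $A=a\phi$ and so on, these $\leq_{\mathcal{R}}^P$ relations give $X^{-1}XA=A$, $X^{-1}XB=B$ (the latter since $AA^{-1}=BB^{-1}$) and $Y^{-1}YC=C$; applying $\phi$ to $xa=yc$, $xb=yd$ yields $XA=YC$, $XB=YD$, so $A=X^{-1}YC$ and $B=X^{-1}YD$. A short computation mimicking the converse part of Lemma~\ref{lem:equality}, namely $A^{-1}B=(X^{-1}YC)^{-1}(X^{-1}YD)=C^{-1}Y^{-1}XX^{-1}YD=C^{-1}Y^{-1}YD=C^{-1}D$, using $XX^{-1}=YY^{-1}$ and $Y^{-1}YC=C$, completes this step.

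I would then check $\overline{\phi}|_S=\phi$ and multiplicativity. For the former, given $s\in S$ choose a straight $s=a^{-1}b$; then $b=as$, so $\overline{\phi}(s)=(a\phi)^{-1}(a\phi)(s\phi)$, and since $s\leq_{\mathcal{R}}^Q a^{-1}$, Lemma~\ref{lem:ttoleqar} gives $s\phi\leq_{\mathcal{R}}^P(a\phi)^{-1}$, whence $(a\phi)^{-1}(a\phi)(s\phi)=s\phi$. For multiplicativity, take straight $p=a^{-1}b$, $q=c^{-1}d$ and reduce the middle factor: write $bc^{-1}=u^{-1}v$ with $u\,\ar^Q\,v$, so $pq=(ua)^{-1}(vd)$. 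Using $aa^{-1}=bb^{-1}$, $cc^{-1}=dd^{-1}$ and $ub=vc$ (from Lemma~\ref{lem:nassersuperlemma}), one checks $(ua)(ua)^{-1}=(ub)(ub)^{-1}=(vc)(vc)^{-1}=(vd)(vd)^{-1}$, so $(ua)^{-1}(vd)$ is again straight and $\overline{\phi}(pq)=(a\phi)^{-1}(u\phi)^{-1}(v\phi)(d\phi)$. To match this with $\overline{\phi}(p)\overline{\phi}(q)=(a\phi)^{-1}(b\phi)(c\phi)^{-1}(d\phi)$, I would use that $bc^{-1}\,\ar^Q\,u^{-1}$ gives $(b,c,u)\in\mathcal{T}^Q_S$, so (ii) yields $(b\phi)(c\phi)^{-1}\leq_{\mathcal{R}}^P(u\phi)^{-1}$ and hence $(b\phi)(c\phi)^{-1}=(u\phi)^{-1}(v\phi)(c\phi)(c\phi)^{-1}$; substituting this and replacing $(c\phi)(c\phi)^{-1}$ by $(d\phi)(d\phi)^{-1}$ (via $c\phi\,\ar^P\,d\phi$ from (i)) collapses the product to $\overline{\phi}(pq)$.

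Finally, if $S\phi$ is a left I-order in $P$, then every $p\in P$ is $(s\phi)^{-1}(t\phi)$ for some $s,t\in S$, and as $\overline{\phi}$ is a morphism extending $\phi$ we get $p=\overline{\phi}(s)^{-1}\overline{\phi}(t)=\overline{\phi}(s^{-1}t)$, so $\overline{\phi}$ is onto. I expect the main obstacle to be the one-sidedness of the hypotheses: conditions (i) and (ii) transfer $\leq_{\mathcal{R}}$-information only in a single direction, so the delicate part, both in the well-definedness and in the multiplicativity arguments, is to isolate exactly which one-sided relations are needed and to recover the $\el$-type data indirectly, through the inverse-duality between $\ar$ and $\el$ and through the ternary relation $\mathcal{T}^Q_S$.
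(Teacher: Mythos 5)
Your proof is correct and follows essentially the same route as the paper's: necessity via preservation of inverses and Green's relations, then defining $\overline{\phi}$ on straight representations, establishing well-definedness via Lemma~\ref{lem:equality} together with Lemma~\ref{lem:ttoleqar}, checking the restriction to $S$, proving multiplicativity by reducing $bc^{-1}=u^{-1}v$ via Lemma~\ref{lem:nassersuperlemma} and condition (ii), and concluding surjectivity exactly as the paper does. The only (harmless) deviations are that you transfer the relations $a\phi\leq_{\mathcal{R}}^P(x\phi)^{-1}$ and $c\phi\leq_{\mathcal{R}}^P(y\phi)^{-1}$ where the paper uses $b\phi$ and $d\phi$, and that you make the uniqueness of the lift explicit.
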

\begin{proof} If $\phi$ lifts to a morphism $\overline{\phi}$, then 
as morphisms between inverse semigroups preserve inverses and Green's
relations, it is easy to see that $(i)$ and $(ii)$ hold.

Conversely, suppose that $(i)$ and $(ii)$ hold. We define
$\overline{\phi}:Q\rightarrow P$ by the rule that
\[(a^{-1}b)\overline{\phi}=(a\phi)^{-1}b\phi\]
where $a,b\in S$ and $a\,\ar^Q\, b$. 

To show that $\overline{\phi}$ is well defined, suppose that
\[a^{-1}b=c^{-1}d\]
where $a,b,c,d\in S$, $a\,\ar^Q\, b$ and $c\,\ar^Q\, d$. Then
by Lemma~\ref{lem:equality}, 
there exist $x,y\in S$ with $xa=yc$ and
$xb=yd$ and such that $a\,\ar^Q\, x^{-1}$,
$x\,\ar^Q\, y$ and $y\,\el^Q\, c^{-1}$. Applying
$\phi$, we have that $x\phi\, a\phi=y\phi\, c\phi$
and $x\phi\, b\phi=y\phi\, d\phi$. By $(i)$ we also
have that $x\phi\,\ar^P\, y\phi,
a\phi\,\ar^P\, b\phi$ and $c\phi\, \ar^P\, d\phi$,
and by $(ii)$ and
Lemma~\ref{lem:ttoleqar}, it follows that 
$b\phi \leq^P_{\mathcal{R}} (x\phi)^{-1}$
and $d\phi\leq^P_{\mathcal{R}}
(y\phi)^{-1}$. 

From $x\phi\, b\phi=y\phi\, d\phi$ we can now deduce
that $b\phi=(x\phi)^{-1}y\phi\, d\phi$ so that
\[\begin{array}{rcl}
(a\phi)^{-1}b\phi &=& (a\phi)^{-1}(x\phi)^{-1}
y\phi\, d\phi\\
&=& (x\phi\, a\phi)^{-1}y\phi\, d\phi\\
&=&(y\phi\, c\phi)^{-1} y\phi\, d\phi\\
&=& (c\phi)^{-1}(y\phi)^{-1}y\phi\, d\phi\\
&=&(c\phi)^{-1}d\phi,\end{array}\]
so that $\overline{\phi}$ is well defined.
 
To see that $\overline{\phi}$ lifts $\phi$, let $h\in S$;
then $h=k^{-1}\ell$ for some $k,\ell\in S$ with $k\,\ar^Q\, \ell$.
We have that  $kh=\ell$ and $h\leqarq k^{-1}$, so that
$k\phi h\phi=\ell\phi$ and by Lemma~\ref{lem:ttoleqar},
$h\phi\leq^P_{\mathcal{R}} (k \phi)^{-1}$. It follows that
$h\phi= (k\phi)^{-1}\ell \phi=h\overline{\phi}$.

We need to show that $\overline{\phi}$ is a morphism. To this end,
let $a^{-1}b,c^{-1}d\in Q$ with $a\,\ar^Q\, b$ and $c\,\ar^Q\, d$. 
By $(i)$ we have that $c\phi\,\ar^P\, d\phi$. Now
$bc^{-1}=u^{-1}v$ for some $u,v\in S$ with $u\,\ar^Q\, v$. By
Lemma~\ref{lem:nassersuperlemma}, $ub=vc$, so that
$u\phi\, b\phi=v\phi\, c\phi$. Further, $(b,c,u)\in \mathcal{T}^Q_S$,
so by assumption $(ii)$, we
have that $(b\phi,c\phi,u\phi)\in \mathcal{T}^P_{T}$.
Then from $u\phi\, b\phi (c\phi)^{-1}=v\phi\, c\phi (c\phi)^{-1}$
we obtain $b\phi (c\phi)^{-1}=(u\phi)^{-1} v\phi\, c\phi (c\phi)^{-1}$.

Multiplying, we have
\[(a^{-1}b)(c^{-1}d)=a^{-1}(bc^{-1})d=a^{-1}(u^{-1}v)d=(a^{-1}u^{-1})(vd)=(ua)^{-1}vd,\]
and
\[ua\,\ar^Q\, ub=vc\,\ar^Q\, vd.\]
Hence
\[\begin{array}{rcll}
((a^{-1}b)(c^{-1}d))\overline{\phi}&=&((ua)^{-1}vd)\overline{\phi}\\
&=&((ua)\phi)^{-1}(vd)\phi\\
&=&(a\phi)^{-1}(u\phi)^{-1}v\phi\, d\phi\\
&=&(a\phi)^{-1}(u\phi)^{-1}v\phi\, c\phi\, (c\phi)^{-1}d\phi\\
&=&(a\phi)^{-1}b\phi (c\phi)^{-1}d\phi\\
&=&(a^{-1}b)\overline{\phi}(c^{-1}d)\overline{\phi},
\end{array}\]
so that $\overline{\phi}$ is a morphism as required.

If $(i)$ and $(ii)$ hold and $S\phi$ is a left I-order in $P$,
then for any $p\in P$ we have $p=(a\phi)^{-1}b\phi$ for some
$a,b\in S$, so that $p=(a^{-1}b)\overline{\phi}$.
\end{proof} 

\begin{cor}\label{cor:iso} Let $S$ be a straight left I-order
in $Q$ and let $\phi:S\rightarrow P$ be an embedding of $S$
into an inverse semigroup $P$ such that $S\phi$ is a straight left
I-order in $P$. Then $Q$ is isomorphic to $P$ over $S$ if and only if
for any $a,b,c\in S$:

$(i)$ $(a,b)\in \ar^Q_S\Leftrightarrow (a\phi,b\phi)\in \ar^P_{S\phi}$;
  and

$(ii)$ $(a,b,c)\in \mathcal{T}^Q_S\Leftrightarrow (a\phi,b\phi,c\phi)\in
\mathcal{T}^P_{S\phi}$.

\end{cor}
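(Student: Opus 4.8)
The plan is to derive the Corollary from Theorem~\ref{thm:homs} together with the uniqueness of lifts it guarantees. The point is that each biconditional in the statement bundles the forward implication needed to lift $\phi$ with the forward implication needed to lift $\phi^{-1}$, and composing the two resulting lifts will produce mutually inverse maps.

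For the forward direction I would assume $Q$ is isomorphic to $P$ over $S$, so $\phi$ lifts to an isomorphism $\overline{\phi}\colon Q\to P$. Being an isomorphism of inverse semigroups, $\overline{\phi}$ is a bijection preserving products and inverses, hence preserves $\ar$ and $\el$ and sends each principal right ideal bijectively onto a principal right ideal. Applying $\overline{\phi}$ and its inverse then gives both implications of $(i)$ and $(ii)$ at once: for $a,b\in S$, $a\,\ar^Q\, b$ holds iff $a\phi\,\ar^P\, b\phi$ (as $\overline{\phi}$ restricts to $\phi$ and is bijective), which since $a\phi,b\phi\in S\phi$ is exactly $(a,b)\in\ar^Q_S\Leftrightarrow(a\phi,b\phi)\in\ar^P_{S\phi}$; the inclusion $ab^{-1}Q\subseteq c^{-1}Q$ transfers in the same way to $a\phi(b\phi)^{-1}P\subseteq(c\phi)^{-1}P$, giving the biconditional in $(ii)$.

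For the converse I would read $(i)$ and $(ii)$ left-to-right and apply Theorem~\ref{thm:homs} to get a morphism $\overline{\phi}\colon Q\to P$ lifting $\phi$, which is onto because $S\phi$ is a left I-order in $P$. To build an inverse I would apply Theorem~\ref{thm:homs} a \emph{second} time, to the morphism $\phi^{-1}\colon S\phi\to S$, with $S\phi$ now in the role of the straight left I-order (inside $P$) and $S$ in the role of the subsemigroup of an inverse semigroup (inside $Q$). The hypotheses of the theorem for $\phi^{-1}$ are precisely the right-to-left implications in $(i)$ and $(ii)$, so I obtain a morphism $\psi\colon P\to Q$ lifting $\phi^{-1}$, and it is onto since $S=S\phi\phi^{-1}$ is a left I-order in $Q$.

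Finally, uniqueness of lifts finishes the argument. The composite $\overline{\phi}\psi\colon Q\to Q$ restricts on $S$ to $\phi\phi^{-1}=\mathrm{id}_S$, and $\mathrm{id}_Q$ is another morphism $Q\to Q$ with the same restriction; the uniqueness clause of Theorem~\ref{thm:homs} forces $\overline{\phi}\psi=\mathrm{id}_Q$, and symmetrically $\psi\overline{\phi}=\mathrm{id}_P$. Hence $\overline{\phi}$ is an isomorphism and $Q$ is isomorphic to $P$ over $S$. I expect the only delicate point to be the bookkeeping in the second application of Theorem~\ref{thm:homs}: one must check that $\phi^{-1}$ really meets the theorem's hypotheses once the roles of the two ambient inverse semigroups are swapped, and that the converse implications of $(i)$ and $(ii)$ supply exactly conditions $(i)$ and $(ii)$ of the theorem for $\phi^{-1}$. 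That verification, rather than any new idea, is the main obstacle.
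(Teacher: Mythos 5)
Your proposal is correct and takes essentially the same approach as the paper: the forward direction follows from Theorem~\ref{thm:homs} applied to the isomorphism, and the converse is obtained by lifting $\phi$ and $\phi^{-1}$ separately via two applications of Theorem~\ref{thm:homs} and checking the two lifts are mutually inverse. The only (inessential) variation is that the paper concludes mutual inverseness directly from the explicit formulas $(a^{-1}b)\overline{\phi}=(a\phi)^{-1}b\phi$ and $((a\phi)^{-1}b\phi)\overline{\phi^{-1}}=a^{-1}b$, whereas you invoke the uniqueness clause of Theorem~\ref{thm:homs}; both are valid.
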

\begin{proof} If $Q$ is isomorphic to $P$ over $S$
then $(i)$ and $(ii)$ hold from Theorem~\ref{thm:homs}.

Suppose now that $(i)$ and $(ii)$ hold. From
 Theorem~\ref{thm:homs}, $\phi$ lifts to a morphism
$\overline{\phi}:Q\rightarrow P$, where
$(a^{-1}b)\overline{\phi}=(a\phi)^{-1}b\phi$. Dually,
$\phi^{-1}:S\phi\rightarrow Q$ lifts to a morphism
$\overline{\phi^{-1}}:P\rightarrow Q$, where
$((a\phi)^{-1}b\phi)\overline{\phi^{-1}}=a^{-1}b$. Clearly
 $\overline{\phi}$ and $\overline{\phi^{-1}}$ are mutually inverse.
\end{proof}

Where $S$ is left ample, and $\phi$ preserves $^+$, then we note
that $(i)$ in Theorem~\ref{thm:homs} and Corollary~\ref{cor:iso}
is redundant. Further redundancies become apparent in the next section.

For an alternative use of Theorem~\ref{thm:homs}, we consider the 
case of left
I-orders in Brandt semigroups.

\begin{thm}\label{thm:brandhoms} Let $S$ be a left I-order in a Brandt
semigroup $\mathcal{B}^0=\mathcal{B}^0(G,I)$. Then $S$ 
contains a zero and is
straight in $\mathcal{B}^0$.

If $\phi:S\rightarrow T$ is an isomorphism where
$T$ is a left I-order in  $\mathcal{B}^0_1=\mathcal{B}^0(H,J)$, then
$\phi$ lifts to  an isomorphism $\overline{\phi}:\mathcal{B}^0\rightarrow
\mathcal{B}^0_1$.

\end{thm}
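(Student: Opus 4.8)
The plan is to break the statement into its two assertions and tackle them separately, relying heavily on the special structure of Brandt semigroups recalled just before Example~\ref{ex:brandt}, namely that for an idempotent $e$ and any element $a$, $ea\neq 0$ forces $ea=a$. First I would establish that $S$ contains a zero and is straight. Since $0$ is the zero of $\mathcal{B}^0$ and every element of $S$ is of the form $a^{-1}b$ with $a,b\in S$, I would locate $0$ inside $S$ by writing it as a quotient: pick any $q\in S$ with $q=a^{-1}b$ and observe that products in $\mathcal{B}^0$ frequently yield $0$, so one can exhibit a concrete element of $S$ equal to $0$ (for instance, if $(j,g,k),(j',g',k')\in S$ with $k\neq j'$, their product is $0$). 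Once $0\in S$, straightness should follow from Lemma~\ref{lem:amplestraight} if $S$ is left ample, but since $S$ need only be a subsemigroup I would instead argue directly: given $q=a^{-1}b$, I would use the computation in the proof of Lemma~\ref{lem:amplestraight}, rewriting $q=(b^+a)^{-1}(a^+b)$, and check that in a Brandt semigroup $b^+a\,\ar\,a^+b$, so that $S$ is straight.

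For the second and harder assertion, the strategy is to apply Corollary~\ref{cor:iso}: I would verify that the isomorphism $\phi:S\to T$ satisfies conditions $(i)$ and $(ii)$ of that corollary, so that $\phi$ lifts to an isomorphism $\overline{\phi}:\mathcal{B}^0\to\mathcal{B}^0_1$. Condition $(i)$ concerns whether $\ar^{\mathcal{B}^0}_S$ is reflected and preserved by $\phi$; the key point is that $\ar^Q_S$ and $\ar^P_T$ can be characterised \emph{internally} in $S$ and $T$ using only the multiplication, because in a Brandt semigroup $a\,\ar\,b$ (for $a,b\neq 0$) is detectable from products within the semigroup. Since $\phi$ is an isomorphism of $S$ onto $T$, any such internal characterisation is automatically preserved in both directions, giving $(i)$. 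The same philosophy applies to the ternary relation $\mathcal{T}^Q_S$ in condition $(ii)$: I would show that $(a,b,c)\in\mathcal{T}^{\mathcal{B}^0}_S$, defined by $ab^{-1}\mathcal{B}^0\subseteq c^{-1}\mathcal{B}^0$, is equivalent to a condition expressible purely through the semigroup operation on $S$ (using the Brandt structure to translate the containment of principal right ideals into a statement about $\mathcal{L}$- and $\mathcal{R}$-classes of the coordinates), whence $\phi$ being an isomorphism forces $(ii)$ to hold in both directions as well.

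The main obstacle I expect is the internalisation step: making precise that $\ar^Q_S$ and especially $\mathcal{T}^Q_S$ depend only on the abstract semigroup $S$ and not on the particular embedding into $\mathcal{B}^0$. For the $\ar$-relation this is clean, since $a\,\ar\,b$ in $\mathcal{B}^0$ holds precisely when $a$ and $b$ have the same first coordinate (both nonzero), and one can recognise this from whether certain products with elements of $S$ vanish. The ternary relation is more delicate, because $ab^{-1}$ need not lie in $S$; here I would exploit the left I-order hypothesis to write the relevant quotients in terms of $S$ and then decode the ideal inclusion coordinate-wise, showing it reduces to matching of the appropriate indices and group components, all of which are isomorphism invariants. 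Once both conditions are shown to be internal to $S$ and $T$, the isomorphism $\phi$ transports them verbatim, $(i)$ and $(ii)$ of Corollary~\ref{cor:iso} follow, and the desired lift $\overline{\phi}$ is an isomorphism over $S$.
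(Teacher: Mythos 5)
Your plan reproduces the paper's proof in outline: establish $0\in S$ and straightness from the coordinate form of a Brandt semigroup, then lift $\phi$ via Corollary~\ref{cor:iso} by arguing that $\ar^{\mathcal{B}^0}_S$ and $\mathcal{T}^{\mathcal{B}^0}_S$ are encoded in the vanishing pattern of products inside $S$, hence transported by any isomorphism. However, there is a genuine gap at precisely the step you call ``clean'': internalising $\ar$. To recognise from inside $S$ that two nonzero elements $a,b$ share their first coordinate, you need a witness $c\in S$ with $ca\neq 0$ (then $a\,\ar\,b$ in $\mathcal{B}^0$ forces $cb\neq 0$, and the isomorphism carries this pattern over). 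The existence of such a witness, i.e.\ $Sa\neq\{0\}$ for every nonzero $a\in S$, is not automatic for a subsemigroup of a Brandt semigroup, and without it your argument collapses: if $Sa=Sb=\{0\}$, then the multiplication of $S$ carries no information whatsoever about the rows of $a$ and $b$, so no internal characterisation of $\ar^{\mathcal{B}^0}_S$ can exist. The paper proves this non-degeneracy as a separate preliminary step, and the proof genuinely needs the left I-order hypothesis: writing $bb^{-1}=a^{-1}c$ with $a,c\in S\setminus\{0\}$, categoricity at zero yields $ab\neq 0$, so $Sb\neq\{0\}$. This little lemma is the engine of the whole lifting argument (it also feeds into the detection of $\el$, which you need for the $\mathcal{T}$ condition), and your sketch never identifies it.

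Two smaller points. First, your location of $0$ inside $S$ is completable but not complete: ``products frequently yield $0$'' is not an argument, and the left I-order hypothesis must enter somewhere, since a nonzero group $\mathcal{H}$-class is a subsemigroup with no vanishing products and no zero. Your idea of writing $0=a^{-1}b$ with $a,b\in S$ does use that hypothesis and can be finished: $a^{-1}b=0$ means the first coordinates of $a=(i,g,j)$ and $b$ differ, so either $ab=0$, or $j$ equals the first coordinate of $b$, whence $j\neq i$ and $a^2=0$; either way $0\in S$ because $S$ is a subsemigroup. (The paper instead derives a contradiction from the assumption that $S$ lies in a group $\mathcal{H}$-class.) Second, your straightness argument via $q=(b^+a)^{-1}(a^+b)$ needs $b^+a,\,a^+b\in S$, which you do not check; in a Brandt semigroup these elements equal $a,b$ (when $q\neq 0$) or $0$, so the rewriting is correct but degenerates into the paper's one-line observation that $a^{-1}b\neq 0$ already forces $a\,\ar\,b$ in $\mathcal{B}^0$.
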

\begin{proof} Let $S$ be a left I-order in $\mathcal{B}^0$. Clearly
$S\neq \{ 0\}$. Suppose that
$S$ is contained within a non-zero group $\mathcal{H}$-class of 
$\mathcal{B}^0$, say $S\subseteq H_{(i,1,i)}$ (where $1$ is the identity
of
$G$). Then $S^{-1}S=\{ a^{-1}b:a,b\in S\}\subseteq H_{(i,1,i)}$,
a contradiction as we must have $0\in S^{-1}S$. It follows that either
$0\in S$, or there exists $(i,g,j)\in S$ for some $i,j\in I$ with $i\neq
j$ and $g\in G$. But in the latter case, we again have $
0=(i,g,j)(i,g,j)\in S$.

Clearly $0=0^{-1}0$. For any $(i,g,j)\in \mathcal{B}^0$, we have
$(i,g,j)=a^{-1}b$ for some $a,b\in S$. We must have that
\[a^{-1}=(i,u,\ell)\mbox{ and }b=(\ell, v,j)\]
so that $a=(\ell,u^{-1},i)\,\ar\, (\ell,v,j)=b$ in $\mathcal{B}^0$. Thus
$S$ is straight in $\mathcal{B}^0$.

Suppose now that $b\in S$ and $b\neq 0$. Then $bb^{-1}\neq 0$ and
$bb^{-1}=a^{-1}c$ for some $a,c\in S\setminus\{ 0\}$. Since
$aa^{-1}\neq 0$ and $\mathcal{B}^0$ is categorical at zero, we
have $ab\neq 0$. We deduce that $Sb\neq \{ 0\}$. 

Let $\phi:S\rightarrow T$ be as given. Let $a,b\in S\setminus\{ 0\}$
with
$a\,\ar\, b$ in $\mathcal{B}^0$. Then there exists $c\in S$ with
$ca\neq 0$ and hence $cb\neq 0$. It follows 
 that $(c\phi)(a\phi), (c\phi)(b\phi)$ are non-zero
in $\mathcal{B}^0_1$, so that $a\phi\,\ar\, b\phi$ in
$\mathcal{B}^0_1$. 

We also show that $\phi$ preserves $\mathcal{L}$; for if
$a,b\in S\setminus\{ 0\}$ and $a\,\el\, b$ in $\mathcal{B}^0$,
then $ba^{-1}\neq 0$, whence $ba^{-1}=u^{-1}v$ for some
$u,v\in S\setminus\{ 0\}$. It follows that $ub=va\neq 0$ and so
$(u\phi)(b\phi)=(v\phi)(a\phi)\neq 0$ in $T$. Consequently,
$a\phi\,\el\, b\phi$ in $\mathcal{B}^0_1$. 

It remains to show that $\phi$ preserves $\mathcal{T}^{\mathcal{B}^0}_S$. Suppose
therefore that
$a,b,c\in S$ and $ab^{-1}\mathcal{B}^0
\subseteq c^{-1}\mathcal{B}^0$. Then either $ab^{-1}=0$, 
or $ab^{-1}\,\ar\, c^{-1}$ in $ \mathcal{B}^0$. In the former case,
$a$ and $b$ are not $\el$-related in $\mathcal{B}^0$ and so by the
previous paragraph (applying the argument to $\phi^{-1}$), 
$a\phi$ and $b\phi$ are not $\el$-related in $\mathcal{B}^0_1$, giving
$(a\phi)(b\phi)^{-1}=0$ and so $(a\phi)(b\phi)^{-1}\mathcal{B}^0_1
\subseteq (c\phi)^{-1}\mathcal{B}^0$. On the
other hand, if $ab^{-1}\neq 0$, then we have
$a\,\el\, b$ and $a\,\ar\, c^{-1}$ in $\mathcal{B}^0$. It follows
that $ca\neq 0$ and so $a\phi\,\el\, b\phi$ and $(c\phi)(a\phi)
\neq 0$ in $\mathcal{B}^0_1$. Consequently,
\[(a\phi)(b\phi)^{-1}\mathcal{B}^0
=(a\phi)\mathcal{B}^0=(c\phi)^{-1}\mathcal{B}^0_1.\] 

Since $\phi$ (and, dually, $\phi^{-1}$) preserve both $\ar$ and $\mathcal{T}$, 
it follows from Corollary~\ref{cor:iso} that
$\phi$ lifts to an isomorphism $\overline{\phi}:
\mathcal{B}^0\rightarrow \mathcal{B}^0_1$.
\end{proof}

A characterisation of left I-orders in Brandt semigroups appears
as a consequence of  the study of left I-orders in primitive inverse semigroups
in
\cite{ghroda:primitive}. The statement of the result in the case
of Brandt
semigroups
was also communicated to the second author by A. Cegarra \cite{cegarra:private}.

\section{Inverse hulls of left ample semigroups}\label{sec:inversehulls}

Let $S$ be a left ample semigroup. Where convenient we identify $S$ with its image under
$\theta$ in $\Sigma(S)$. We begin with four simple but useful
observations.

\begin{rem}\label{rem:quotients} First observe that for any
$a,b\in S$, 
\[\begin{array} {rcl}
\dom \rho_a^{-1}\rho_b&=& (\im \rho_{a}^{-1}\cap \dom \rho_b)(\rho_a^{-1})^{-1}\\
&=&(\dom\rho_a\cap \dom\rho_b)\rho_a\\
&=&(Sa^+\cap Sb^+)\rho_a\\
&=&(Sa^+b^+)\rho_a\\
&=&Sb^+a\end{array}\]
and $\im \rho_a^{-1}\rho_b=(Sb^+a)\rho_a^{-1}\rho_b=Sb^+a^+\rho_b
=Sa^+b$, and for any $yb^+a\in Sb^+a$, 
\[(yb^+a)\rho_a^{-1}\rho_b=(yb^+a^+)\rho_b=ya^+b.\]
It follows that if $a\,\ars\, b$, then 
\[\dom\rho_a^{-1}\rho_b=Sa,\, \im \rho_a^{-1}\rho_b=Sb
\mbox{ and }
(ya) \rho_a^{-1}\rho_b= yb.\]
\end{rem}

\begin{rem}\label{rem:el}
We also observe that for any $a,b\in S$,
\[\begin{array}{rcl}
\rho_a\,\el\, \rho_b\mbox{ in }\Sigma(S)&\Leftrightarrow&
\im \rho_a=\im \rho_b\\
&\Leftrightarrow&Sa=Sb\\
&\Leftrightarrow&a\,\el\, b\mbox{ in }S.\end{array}\]
\end{rem}

\begin{rem}\label{rem:reverse} If $b,c\in S$ and $Sb\cap Sc=Sw$, where
$ub=vc=w$ and $ub^+=u, vc^+=v$, then
\[\dom \rho_b\rho_c^{-1}=(\im\rho_b\cap \dom\rho_c^{-1})\rho_b^{-1}=
(Sb\cap Sc)\rho_b^{-1}=Sw\rho_b^{-1}=(Sub)\rho^{-1}_b=Sub^+=Su,\]
and for any $su\in Su$, 
\[(su)\rho_b\rho_c^{-1}=(sub)\rho_c^{-1}=(svc)\rho_c^{-1}=
svc^+=sv,\]
so that in particular, $\im \rho_b\rho_c^{-1}=Sv$. Notice that
\[u=ub^+\,\ars\, ub=vc\,\ars\, vc^+=v.\]It follows
from Remark~\ref{rem:quotients} that $\rho_b\rho_c^{-1}=
\rho_u^{-1}\rho_v$.
\end{rem}

It is known \cite{clifford:1953}, although our terminology is new,
 that a right cancellative monoid is a left
I-order in its inverse hull if and only if it satisfies Condition (LC), which we
now define for arbitrary semigroups.

\begin{defn}\label{defn:lc} We say that a semigroup $S$
satisfies Condition (LC) if for any $a,b\in S$, there
exists $c\in S$ with $Sa\cap Sb=Sc$.
\end{defn}

Before proving the analogue of Clifford's result, we give two preliminary lemmas,  of which we will make much use.

\begin{lem}\label{lem:elcircars} For
any semigroup $S$,
$\ars\circ\el=\el\circ\ars$.
\end{lem}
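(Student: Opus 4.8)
The plan is to prove the two inclusions $\ars\circ\el\subseteq\el\circ\ars$ and $\el\circ\ars\subseteq\ars\circ\el$ separately. Since the two arguments are mirror images of one another, it really suffices to carry out one of them in full. Throughout I would lean on exactly two facts recorded earlier: that $\ars$ is a left congruence, and the cancellation characterisation of $\ars$ supplied by Lemma~\ref{lem:ars}.

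For the first inclusion, suppose $a\,\ars\, c$ and $c\,\el\, b$, and choose $s,t\in S^1$ with $sc=b$ and $tb=c$. The natural candidate for the intermediate element is $d=sa$, and I would verify that $a\,\el\, d\,\ars\, b$. The relation $d\,\ars\, b$ is immediate: as $\ars$ is a left congruence, $a\,\ars\, c$ gives $sa\,\ars\, sc$, that is, $d\,\ars\, b$. For $a\,\el\, d$, one half $d\leqel a$ is clear from $d=sa\in S^1a$; the substance is the reverse. This is precisely where the classical Green's-lemma factorisation is unavailable, since $\ars$ does not supply equations of the form $a=c\,u$. Instead I would combine $sc=b$ and $tb=c$ into $(ts)c=c$, and then transport this equality across the pair $a\,\ars\, c$ by applying Lemma~\ref{lem:ars} with $x=ts$ and $y=1$: from $tsc=c$ we deduce $tsa=a$, whence $a=t(sa)=td\in S^1d$. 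Thus $a\,\el\, d$ and so $(a,b)\in\el\circ\ars$.

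The reverse inclusion follows the dual recipe. Given $a\,\el\, d$ and $d\,\ars\, b$, pick $p,q\in S^1$ with $pd=a$ and $qa=d$, and set $c=pb$. Left compatibility of $\ars$ turns $d\,\ars\, b$ into $pd\,\ars\, pb$, i.e.\ $a\,\ars\, c$; and from $qpd=d$ together with $d\,\ars\, b$, Lemma~\ref{lem:ars} (with $x=qp$, $y=1$) yields $qpb=b$, so that $b=q(pb)=qc\in S^1c$ and hence $c\,\el\, b$. Therefore $(a,b)\in\ars\circ\el$, which completes the proof.

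I expect the only genuine obstacle to be the step producing the $\el$-relation on the witness element $d$ (respectively $c$): because $\ars$-related elements need not be linked by $\mathcal{R}$-style factorisations, the membership $a\in S^1d$ cannot be read off directly and must instead be extracted from a left-cancellation identity pushed through the $\ars$-related pair by means of Lemma~\ref{lem:ars}. Once that transfer principle is isolated, the left-congruence property of $\ars$ disposes of the remaining relation with no further work.
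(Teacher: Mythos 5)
Your proof is correct and takes essentially the same route as the paper's: you use the same witness element (your $sa$ is the paper's $va$), the same appeal to the left-congruence property of $\ars$ for one relation, and the same transfer of the identity $(ts)c=c$ across the $\ars$-related pair via Lemma~\ref{lem:ars} to obtain the $\el$-relation. The only cosmetic differences are that you work uniformly with $S^1$ (where the paper splits off the case $c=b$ to keep its coefficients in $S$) and that you write out the dual inclusion which the paper leaves as ``very similar''.
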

\begin{proof} Let $a,b\in S$ with $a\,\ars\circ\el\, b$. Then
there exists an element $c\in S$ with $a\,\ars\,c\,\el\, b$. Either
$c=b$ in which case $a\,\el\, a\,\ars\, b$ or there exist
$u,v\in S$ with $c=ub$ and $b=vc$. Hence
$c=ub=uvc$ so that as $a\,\ars\, c$ we deduce
$a=uva$ and thus $a\,\el\, va$. But $va\,\ars\, vc=b$ so
that $a\, \el\circ\ars b$ and $\ars\circ\el\subseteq
\el\circ \ars$. The proof of the dual inclusion is very similar.
\end{proof}

\begin{lem}\label{lem:unionarclasses} 
Let $S$ be a left ample semigroup that is a left I-order
in an inverse semigroup $Q$, such that $S$ is a union of $\ar$-classes
of $Q$. Then

(i) $S$ is a $(2,1)$-subalgebra of $Q$;

(ii) for $a,b\in S$ with $a\,\ars\, b$, $a^{-1}b$ is idempotent if and
only if $a=b$;

(iii) for any $a,b\in S$, $Sa\subseteq Sb$ if and only if
$Qa\subseteq Qb$;

(iv) for any $a,b,c\in S$, $Sa\cap Sb=Sc$ if and only if
$Qa\cap Qb=Qc$;

(v) $S$ satisfies Condition (LC);

(vi) $Q$ is bisimple if and only if
\[\el^S\circ \ars=S\times S;\]
and

(vii) $Q$ is simple if and only if for all $a,b\in S$ there exists
$c\in S$ with 
\[a\,\ars\, c\leq^S_{\mathcal{L}} b.\]

\end{lem}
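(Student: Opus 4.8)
The plan is to establish each part of Lemma~\ref{lem:unionarclasses} in an order that lets the earlier parts feed the later ones, exploiting throughout the standing hypothesis that $S$ is a union of $\ar$-classes of $Q$. The key observation to extract first is that if $a\in S$ then its whole $\ar^Q$-class lies in $S$; since $Q$ is inverse, $aa^{-1}=a^+$ is $\ar^Q$-related to $a$, so $a^+\in S$, which immediately gives part~(i): $S$ is closed under $^+$, hence a $(2,1)$-subalgebra. This is the linchpin, because it means the idempotent $a^+$ computed in $Q$ coincides with the one required by the left ample structure of $S$, and it lets me pass freely between $\ar^Q$ on elements of $S$ and the abstract $\ars$ of the left I-order.

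**For the middle parts** I would use the concrete description of quotients. For~(ii), if $a\,\ars\, b$ with $a,b\in S$, then by Lemma~\ref{lem:amplestraight} (straightness) $a\,\ar^Q\, b$, so $a^{-1}b\,\el^Q\, b$ and $a^{-1}b\,\ar^Q\, a^{-1}\,\ar^Q\, a^{-1}a=b^{-1}b$-type reasoning applies; concretely $a^{-1}b$ is idempotent iff $a^{-1}b=(a^{-1}b)(a^{-1}b)^{-1}=a^{-1}aa^{-1}b=\dots$, and unwinding this forces $b=aa^{-1}b$ and $a=ba^{-1}a$-style equalities, collapsing to $a=b$ by left amplitude (the idempotent $a^+=b^+$ and cancellation via $\ars$). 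For~(iii), the forward direction $Sa\subseteq Sb\Rightarrow Qa\subseteq Qb$ is routine; the converse uses that $a=a^+a$ with $a^+\in S$, so $a\in Qa\cap S$ and one pulls a witness $q\in Q$ with $a=qb$ back into $S$ by writing $q=u^{-1}v$ and using that elements of $S$ are closed under $^+$ to land inside $Sb$. Parts~(iv) and~(v) then follow quickly: (iv) is (iii) applied to the meet (intersection of principal ideals translates between $S$ and $Q$ because idempotents and the relevant products stay in $S$), and (v) is the special case of (iv) guaranteeing a single generator $c$ for $Sa\cap Sb$, which exists because in the inverse semigroup $Q$ the intersection $Qa\cap Qb$ is principal (generated by $b^+a^+\cdot$something, via the computation in Remark~\ref{rem:quotients}).

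**The bisimplicity and simplicity criteria**, parts~(vi) and~(vii), are where the real content sits and where I expect the main obstacle. For~(vi), $Q$ is bisimple iff it is a single $\dee$-class iff any two idempotents are $\dee$-related; since every element of $Q$ is $a^{-1}b$ with $a\,\ar^Q\,b$ in $S$, and since $S$ meets every $\el^Q$-class (by the remark preceding Lemma~\ref{lem:amplestraight}), I would translate $\dee^Q=\ar^Q\circ\el^Q$ restricted to idempotents into a statement about $S$. The claim is that $a\,\dee^Q\, b$ for all $a,b\in S$ is equivalent to $\el^S\circ\ars=S\times S$; the direction to watch is showing that the abstract composite $\el^S\circ\ars$ (using $S$'s own $\el$, which by Remark~\ref{rem:el} agrees with $\el^Q_S$) captures connectivity of idempotents in $Q$. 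For~(vii), simplicity of $Q$ means $\jay^Q=Q\times Q$, i.e.\ $QaQ=QbQ$ for all $a,b$; reducing this to the stated one-sided condition $a\,\ars\, c\leq^S_{\mathcal L} b$ requires showing that the two-sided ideal structure of $Q$ is controlled by the $\ar$- and $\el$-behaviour of $S$. \textbf{The hard part} will be handling the asymmetry: $S$ is a union of $\ar$-classes but not of $\el$-classes, so $\el^S$ and $\ars$ do not play symmetric roles, and I must be careful that the composite relations in (vi) and (vii) are exactly the restrictions of the corresponding Green relations on $Q$. I would lean on Lemma~\ref{lem:elcircars} ($\ars\circ\el=\el\circ\ars$) to reorder the composites into whichever form matches the $Q$-side computation, and on straightness to guarantee that a quotient $a^{-1}b$ connecting two idempotents can always be chosen with $a,b$ in a prescribed $\ar$-class, thereby anchoring the abstract condition to genuine $\dee^Q$- or $\jay^Q$-chains in $Q$.
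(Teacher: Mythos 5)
Your overall strategy---prove (i) first from the union-of-$\ar^Q$-classes hypothesis, then use straightness and the commutation Lemma~\ref{lem:elcircars} for the structural parts---is exactly the paper's, and your (i) and (ii) are essentially right. But several middle steps have genuine gaps. In (iii), the converse is where all the work is, and the mechanism you name (``elements of $S$ are closed under $^+$'') is not sufficient: from $a=qb$ with $q=h^{-1}k$, $h,k\in S$, one must show the coefficient of $b$ can be taken in $S$, and this requires putting an \emph{inverse} into $S$. The paper rewrites $a=((kb)^+h)^{-1}h^+kb$ and observes that $((kb)^+h)^{-1}\,\ar^Q\,a$, so it is the union-of-$\ar^Q$-classes hypothesis, not mere $^+$-closure, that forces $((kb)^+h)^{-1}\in S$. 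In (iv) you assert rather than prove the translation: the forward direction ($Sa\cap Sb=Sc\Rightarrow Qa\cap Qb=Qc$) is not just ``(iii) applied to the meet''; one must take $h^{-1}ka=u^{-1}vb\in Qa\cap Qb$, apply straightness to $hu^{-1}=s^{-1}t$ to land $ska=tvb$ inside $Sa\cap Sb=Sc$, and then pull back. In (v) the gap is sharper: knowing that $Qa\cap Qb$ is principal in $Q$ is useless for invoking (iv), because (iv) requires a generator \emph{lying in $S$}, whereas your candidate generator (a product of idempotents such as $a^{-1}ab^{-1}b$) need not be in $S$; also Remark~\ref{rem:quotients} concerns $\Sigma(S)$, not a general $Q$. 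The missing step is to write $ab^{-1}=s^{-1}t$ with $s,t\in S$, $s\,\ar^Q\,t$, by straightness, whence $Qa\cap Qb=Qab^{-1}b=Qs^{-1}tb=Qtb$ with $tb\in S$; only then does (iv) deliver $Sa\cap Sb=Stb$.

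For (vi) and (vii) you correctly identify the plan (translate $\dee^Q$ between idempotents into a statement about $S$, then reorder with Lemma~\ref{lem:elcircars}), and for (vi) this can be carried through essentially as you describe, since every element $a^{-1}b$ of $Q$ is $\dee^Q$-related to elements of $S$ and the $\el^Q$-to-$\el^S$ conversion is covered by (iii). But for (vii) you stop at ``$\jay^Q$ universal must be controlled by the one-sided behaviour of $S$'' without the tool that makes this tractable: the paper invokes the standard characterisation (Clifford--Preston, Theorem 8.33) that an inverse semigroup is simple if and only if for all idempotents $e,f$ there exists $q$ with $qq^{-1}=e$ and $q^{-1}q\leq f$. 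Combined with your (ii) (every idempotent of $Q$ has the form $a^{-1}a$ with $a\in S$), this converts simplicity into: for all $a,b\in S$ there exist $c,d\in S$ with $c\,\ars\,d$, $Sa=Sc$ and $Sd\subseteq Sb$, which the commutation lemma rewrites as the stated condition $a\,\ars\,c\leq^S_{\mathcal{L}}b$. Without that characterisation (or a substitute for it), your proposed reduction of $\jay^Q$-universality has no concrete route to completion.
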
 
\begin{proof} $(i)$ We need only show that if $a\in S$, then
$aa^{-1}=a^+$.
We have that $a\,\ar^Q\, aa^{-1}$ and $S$ is a union of
$\ar^Q$-classes, giving $aa^{-1}\in S$. As
$a\,\ars\, aa^{-1}$ we must have that $aa^{-1}=a^+$.

$(ii)$ If $a^{-1}b$ is idempotent, then
as $a^{-1}b\,\ar^Q\, a^{-1}a$, we must have
that $a^{-1}b=a^{-1}a$. Multiplying with $a$ on the
left gives $b=bb^{-1}b=aa^{-1}b=aa^{-1}a=a$. The converse is clear.

$(iii)$ If $a,b\in S$ and $Sa\subseteq Sb$, then clearly $Qa\subseteq
Qb$. On the other hand, if $Qa\subseteq Qb$, then we have that
$a=h^{-1}kb$ for some $h,k\in S$. It follows
that $a=((kb)^+h)^{-1}h^+kb$ and
\[((kb)^+h)^{-1}\,\ar^Q\, ((kb)^+h)^{-1}((kb)^+h)
\,\ar^Q\, ((kb)^+h)^{-1}h^+kb=a,\]
so that as $S$ is a union of $\ar^Q$-classes, 
$((kb)^+h)^{-1}\in S$. It follows that $Sa\subseteq Sb$.

$(iv)$ Suppose that $a,b\in S$ and $Sa\cap Sb=Sc$. Then
$c\in Sa\cap Sb\subseteq Qa\cap Qb$, so that $Qc\subseteq
Qa\cap Qb$. Conversely, if $h^{-1}ka=u^{-1}vb\in Qa\cap Qb$,
where $h,k,u,v\in S$, $h\,\ar\, k$ and $u\,\ar\, v$ in $Q$,
then $ka=hu^{-1}vb$ and
$hu^{-1}=s^{-1}t$ say, where $s,t\in S$ and $s\,\ar\, t$ in $Q$.
This gives that
\[ska=tvb\in Sa\cap Sb=Sc,\]
and so  $ska=tvb=xc$, where
$x\in S$. Now
\[ka=hu^{-1}vb=s^{-1}tvb=s^{-1}xc\]
and then $h^{-1}ka=h^{-1}s^{-1}xc\in Qc$. Hence
$Qa \cap Qb\subseteq Qc$ so that
$Qa\cap Qb=Qc$.

Conversely, suppose that $a,b\in S$ and $Qa\cap Qb=Qc$. From
$Qc\subseteq Qa$ and $Qc\subseteq Qb$, $(iii)$ gives that 
$Sc\subseteq Sa\cap Sb$. On the other hand, if $u=xa=yb\in Sa\cap Sb$
 for some
$x,y\in S$, then $u=qc$ for some $q\in Q$, whence
$Qu\subseteq Qc$. Again from $(iii)$, $Su\subseteq Sc$ so
that $Sa\cap Sb\subseteq Sc$ and we have
$Sa\cap Sb=Sc$ as required.

$(v)$  Let $a,b\in S$.
Then 
\[Qa\cap Qb=Qa^{-1}a\cap Qb^{-1}b=Qa^{-1}ab^{-1}b=Qab^{-1}b,\]
but $ab^{-1}=s^{-1}t$ for some $s,t\in S$ 
with $s\,\ar\, t$ in $Q$, and so
\[Qa\cap Qb=Qs^{-1}tb=Qtb.\]
From $(iv)$ we now have that
 $Sa\cap Sb=Stb$ and $S$ has Condition (LC).

$(vi)$ 
 We have observed in 
Lemma~\ref{lem:amplestraight} that $S$ is straight in $Q$. Let
$a^{-1}b,c^{-1}d\in Q$,  where
$a\,\ars\, b$ and $c\,\ars\, d$. Then
\[\begin{array}{rcl}
a^{-1}b\,\mathcal{D}^Q\, c^{-1}d
&\Leftrightarrow&a^{-1}b\,\ar^Q\, x^{-1}y\,\el^Q\, c^{-1}d
\mbox{ for some }x,y\in S\mbox{ with } x\,\ars\, y\\
&\Leftrightarrow& a^{-1}\,\ar^Q\, x^{-1}\mbox{ and }y\,\el^Q\, d
\mbox{ for some }x,y\in S\mbox{ with }x\,\ars\, y\\
&\Leftrightarrow& a\,\el^Q\, x\mbox{ and }y\,\el^Q\, d
\mbox{ for some }x,y\in S\mbox{ with }x\,\ars\, y\\
&\Leftrightarrow&a\,\el^S\, x\,\ars\, y\, \el^S\, d
\mbox{ for some }x,y\in S.
\end{array}\]
It follows that $Q$ is bisimple if and only if 
$\el^S\circ \ars\circ\el^S$ is universal. But from
Lemma~\ref{lem:elcircars}, $\el$ and $\ars$ commute on $S$, so
that $Q$ is bisimple if and only if $\el^S\circ \ars=S\times S$.

$(vii)$
Since $Q$ is inverse, it follows from 
\cite[Theorem 8.33]{cliffordpreston} that
$Q$ is simple if and only if for any $e,f\in E(Q)$, there is an element $q\in Q$
with $e=qq^{-1}$ and $q^{-1}q\leq f$. Let $e,f\in Q$
so that by $(ii)$, $e=a^{-1}a,f=b^{-1}b$
for some $a,b\in S$. Then $Q$ is simple if and only if there
exists $q=c^{-1}d$ (where $u,v\in S$ and $c\,\ars\, d$) such that
\[e=qq^{-1}=c^{-1}c\mbox{ and }d^{-1}d=q^{-1}q\leq f.\]
It follows that $Q$ is simple if and only if for any $a,b\in S$  there exist $c,d\in S$ with
$c\,\ars\, d$ such that $Sa=Sc$ and $Sd\subseteq Sb$. Again using
Lemma~\ref{lem:elcircars}, we obtain the given condition.
\end{proof}

We can now extend from right cancellative monoids to left ample semigroups the 
classic result for inverse hulls.

\begin{thm}\label{thm:leftampleorders} Let $S$ be a left ample
semigroup.
Then $S\theta_S$ is a left I-order in its inverse hull if and only if $S$ has
Condition (LC). 

If Condition (LC) holds, then $S\theta_S$ is a union of 
$R^{\Sigma(S)}$-classes.

\end{thm}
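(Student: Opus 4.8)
The plan is to prove the two implications of the equivalence separately, and then to read off the union-of-$\ar$-classes assertion from the construction used in the `if' direction. Throughout I identify $S$ with $S\theta_S$ and write $\Sigma(S)$ for its inverse hull inside $\mathcal{I}_S$, recalling that $\theta_S$ is a homomorphism, so $\rho_u\rho_a=\rho_{ua}$.

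For the direction \emph{(LC) $\Rightarrow$ left I-order}, I would set $T=\{\rho_a^{-1}\rho_b:a,b\in S\}\subseteq\Sigma(S)$ and show that $T$ is an inverse subsemigroup of $\mathcal{I}_S$ containing $\im\theta_S$. Since $\Sigma(S)$ is the smallest such subsemigroup, this forces $T=\Sigma(S)$, which is exactly the left I-order property. Containment of the generators is immediate from $\rho_a=\rho_{a^+}^{-1}\rho_a$ (as $\rho_{a^+}$ is the identity map on $Sa^+=\dom\rho_a$), and $T$ is visibly closed under inversion. The only real work is closure under products: to handle $(\rho_a^{-1}\rho_b)(\rho_c^{-1}\rho_d)$ I must rewrite the middle factor $\rho_b\rho_c^{-1}$, and here is the one place (LC) is used. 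It supplies $w\in S$ with $Sb\cap Sc=Sw$; choosing $u,v$ as in Remark~\ref{rem:reverse} gives $\rho_b\rho_c^{-1}=\rho_u^{-1}\rho_v$, whence the product collapses to $(\rho_u\rho_a)^{-1}(\rho_v\rho_d)=\rho_{ua}^{-1}\rho_{vd}\in T$.

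For \emph{left I-order $\Rightarrow$ (LC)}, I would use that $S$ is straight by Lemma~\ref{lem:amplestraight}. Given $a,b\in S$, write $\rho_a\rho_b^{-1}=\rho_s^{-1}\rho_t$ with $\rho_s\,\ar\,\rho_t$ in $\Sigma(S)$, which amounts to $s\,\ars\,t$. Comparing domains gives $\dom(\rho_a\rho_b^{-1})=(Sa\cap Sb)\rho_a^{-1}$ on the one hand and $\dom(\rho_s^{-1}\rho_t)=Ss$ on the other (Remark~\ref{rem:quotients}); in particular $Ss\subseteq\dom\rho_a$. Applying $\rho_a$ and using that $\rho_a^{-1}\rho_a$ is the identity on $Sa\supseteq Sa\cap Sb$, I obtain $Sa\cap Sb=(Ss)\rho_a=S(sa)$, so $c=sa$ witnesses (LC). (Alternatively, once the union-of-$\ar$-classes statement below is established, (LC) follows at once from Lemma~\ref{lem:unionarclasses}(v).)

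Finally, for the union-of-$\ar$-classes assertion, note first that $\ar$ restricts from $\mathcal{I}_S$, so $\rho_x\,\ar\,\rho_y$ in $\Sigma(S)$ iff $Sx^+=Sy^+$, i.e. iff $x^+=y^+$, i.e. iff $x\,\ars\,y$. Suppose $\sigma\in\Sigma(S)$ with $\sigma\,\ar\,\rho_c$ for some $c\in S$. By the first part $\sigma=\rho_a^{-1}\rho_b$ with $a\,\ars\,b$, and $\sigma\sigma^{-1}=\rho_a^{-1}\rho_a$ is the identity on $Sa$, so the $\ar$-condition forces $Sa=Sc^+$; thus $Sa=Se$ for the idempotent $e=c^+$, giving $a\,\el\,e$ in $S$. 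Writing $e=pa$ (possible as $e\in Se=Sa$), I set $f:=pb$. Then $f\,\ars\,pa=e$ since $\ars$ is a left congruence and $b\,\ars\,a$, so $f^+=e$ and $\dom\rho_f=Se=Sa=\dom\sigma$. Moreover $af=(ap)b=b$: indeed $(ap)a=ae=a=a^+a$, so Lemma~\ref{lem:ars} yields $(ap)b=a^+b=b$. Hence $\im\rho_f=Sf=Sb=\im\sigma$, and for $x=za\in Sa$ we get $x\rho_f=zaf=zb=x\sigma$; therefore $\rho_f=\sigma\in S\theta_S$, so $S\theta_S$ is a union of $\ar$-classes. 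I expect the last paragraph to be the main obstacle: one must pin down exactly which $\ar$-classes of $\Sigma(S)$ meet $S\theta_S$ (those with domain $Se$, $e$ idempotent) and then produce the single element $f=pb$ together with the identity $af=b$, whose verification rests on $\ars$ being a left congruence; the product-closure in the first paragraph is routine once Remark~\ref{rem:reverse} is available.
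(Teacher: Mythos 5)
Your proposal is correct, and for the equivalence it coincides with the paper's own proof: the same candidate subsemigroup $\{\rho_a^{-1}\rho_b : a,b\in S\}$, the same identity $\rho_a=\rho_{a^+}^{-1}\rho_a$ to capture the generators, closure under products via Remark~\ref{rem:reverse}, and, for the converse, the same comparison of $\dom(\rho_a\rho_b^{-1})=(Sa\cap Sb)\rho_a^{-1}$ with $\dom(\rho_s^{-1}\rho_t)=Ss$ followed by applying $\rho_a$. The one genuine divergence is in the union-of-$\ar$-classes statement. The paper argues: from $\dom\sigma=Sa=Se$ with $e\in E(S)$ it follows that $a$ is regular in $S$, so $a$ has an inverse $c$ in $S$; by uniqueness of inverses in $\Sigma(S)$, $\rho_c=\rho_a^{-1}$, whence $\sigma=\rho_a^{-1}\rho_b=\rho_{cb}\in S\theta_S$. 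You instead build the preimage explicitly: writing $e=pa$ you set $f=pb$, and verify $\dom\rho_f=\dom\sigma$, the key identity $af=b$ (via Lemma~\ref{lem:ars}), and pointwise agreement of $\rho_f$ with $\sigma$. The two routes are essentially equivalent --- they even produce the same element, since the paper's inverse $c=pap$ satisfies $cb=(pap)b=pb=f$ by Lemma~\ref{lem:ars} --- but the paper's regularity argument is shorter, while yours is more self-contained, avoiding the appeal to uniqueness of inverses and making explicit which $\ar$-classes of $\Sigma(S)$ meet $S\theta_S$. Your parenthetical alternative (deducing (LC) from the union statement via Lemma~\ref{lem:unionarclasses}(v)) is also sound and non-circular, because your argument for the union statement uses only the left I-order property and straightness, never (LC); the only point of precision is that writing $\sigma=\rho_a^{-1}\rho_b$ with $a\,\ars\,b$ there requires Lemma~\ref{lem:amplestraight}, not merely the first part.
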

\begin{proof} Suppose that $S$ is a left I-order
in $\Sigma(S)$. The for any $b,c\in S$,
$\rho_b\rho_c^{-1}=\rho_u^{-1}\rho_v$ where
$u\,\ars\, v$. By Remark~\ref{rem:quotients}
$\dom (\rho_b\rho_c^{-1})=Su$, so that
\[Su=(\im \rho_b\cap \dom \rho_c^{-1})\rho_b^{-1}=
(Sb\cap Sc)\rho_b^{-1}.\]
But $\rho_b^{-1}\rho_b$ is the identity on $Sb=\im\rho_b$, and
so 
\[Sub=(Su)\rho_b=(Sb\cap Sc)\rho_b^{-1}\rho_b=Sb\cap Sc,\]
and $S$ has Condition (LC).

Conversely, suppose that $S$ has Condition (LC). Let
\[Q=\{ \rho_a^{-1}\rho_b:a,b\in S\}\subseteq \Sigma(S).\] Observe
that for any $a\in S$, $\rho_a=\rho_{a^+a}
=\rho_{a^+}^{-1}\rho_a$, so that $S\theta_S\subseteq Q$.

Consider $b,c\in S$. By Condition (LC), there exist
$u,v\in S$ with $Sb\cap Sc=Sub$ and $ub=vc$ with $ub^+=u$ and
$vc^+=v$. By Remark~\ref{rem:reverse}, $\rho_b\rho_c^{-1}=
\rho_u^{-1}\rho_v$. 

It follows that if $\rho_a^{-1}\rho_b,\rho_c^{-1}\rho_d\in 
Q$, then
\[(\rho_a^{-1}\rho_b)(\rho_c^{-1}\rho_d)=
\rho_a^{-1}(\rho_b\rho_c^{-1})\rho_d=
\rho_a^{-1}(\rho_u^{-1}\rho_v)\rho_d=
(\rho_u\rho_a)^{-1}(\rho_v\rho_d)=
\rho_{ua}^{-1}\rho_{vd},\]
so that $Q$ is closed under multiplication. Clearly $Q$ is closed under
taking inverses, so that $\Sigma(S)\subseteq Q$ from definition of inverse hull,
and 
so $Q=\Sigma(S)$ as required.

Finally, if $e\in E(S)$ and $\rho_e\,\ar^{\Sigma(S)}\, \rho_a^{-1}\rho_b$, where 
$a,b\in S$ and $a\,\ars\, b$, then
$\dom \rho_e=\dom \rho_a^{-1}\rho_b$, so that $Se=Sa$ 
and $a$ is regular in $S$. Any inverse $c$ of 
 $a$ in $S$ must be such that $\rho_c$ is the unique inverse of $\rho_a$
in $Q$, so that
$\rho_a^{-1}\in S\theta_S$ and hence $\rho_a^{-1}\rho_b\in S\theta_S$.
\end{proof}

\begin{cor}\label{cor:bisimple} The following conditions
are equivalent for a left ample semigroup $S$:

(i) $\Sigma(S)$ is bisimple;

(ii) $S$ has Condition (LC) and
$\ars\circ\el=S\times S$;

(iii) $S$ is a left I-order in $\Sigma(S)$ and $\ars\circ\el=S\times S$.

\end{cor}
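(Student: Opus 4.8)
The plan is to close the cycle $(ii)\Leftrightarrow(iii)$, $(iii)\Rightarrow(i)$, $(i)\Rightarrow(iii)$, drawing the routine implications straight from Theorem~\ref{thm:leftampleorders} and Lemma~\ref{lem:unionarclasses} and concentrating the real work on $(i)\Rightarrow(iii)$. The equivalence $(ii)\Leftrightarrow(iii)$ is immediate: Theorem~\ref{thm:leftampleorders} says that $S\theta_S$ is a left I-order in $\Sigma(S)$ exactly when $S$ has Condition (LC), so the first clauses of $(ii)$ and $(iii)$ are interchangeable, while the second clause $\ars\circ\el=S\times S$ is common to both.

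For $(iii)\Rightarrow(i)$ I would argue as follows. Assuming $(iii)$, Condition (LC) holds, so by the final assertion of Theorem~\ref{thm:leftampleorders} the image $S\theta_S$ is a union of $\ar$-classes of $\Sigma(S)$. Thus $S$ (identified with $S\theta_S$) meets the hypotheses of Lemma~\ref{lem:unionarclasses} with $Q=\Sigma(S)$, and part (vi) of that lemma gives that $\Sigma(S)$ is bisimple if and only if $\el\circ\ars=S\times S$. By Lemma~\ref{lem:elcircars} we have $\el\circ\ars=\ars\circ\el$, and the latter equals $S\times S$ by $(iii)$; hence $\Sigma(S)$ is bisimple.

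The substance lies in $(i)\Rightarrow(iii)$, where from bisimplicity alone I must recover that $S\theta_S$ is a left I-order (equivalently, that (LC) holds); the second clause then follows exactly as in the previous paragraph via Lemma~\ref{lem:unionarclasses}(vi). The key observation is that each generator $\rho_a$ is a left-$S$-equivariant partial bijection, that is $(sx)\rho_a=s(x\rho_a)$, and since equivariance is preserved by composition and by taking inverses, every element of $\Sigma(S)$ is left-$S$-equivariant. Now fix $e\in E(S)$; then $\rho_e$ is idempotent with $\dom\rho_e=Se$, and I claim the $\ar$-class $R_{\rho_e}$ of $\Sigma(S)$ lies inside $S\theta_S$. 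Indeed, if $\beta\in\Sigma(S)$ has $\dom\beta=Se$ and we set $c:=e\beta$, then equivariance forces $x\beta=xc$ for all $x\in Se$, so $\beta$ agrees with $\rho_c$ on $Sc^+\subseteq Se$; comparing images (both equal $Sc$) and using injectivity of $\beta$ forces $Sc^+=Se$, whence $c^+=e$ and $\beta=\rho_c\in S\theta_S$. Finally, since $\Sigma(S)$ is bisimple, the remark following Example~\ref{ex:bicyclic} lets us write every element of $\Sigma(S)$ as $\beta^{-1}\gamma$ with $\beta,\gamma\in R_{\rho_e}$; as $R_{\rho_e}\subseteq S\theta_S$, this exhibits each element in the form $\rho_c^{-1}\rho_d$, so $S\theta_S$ is a left I-order and $(iii)$ holds.

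The main obstacle is precisely this last direction: converting the abstract hypothesis of bisimplicity into the combinatorial Condition (LC). The device that makes it work is the equivariance of the maps in $\Sigma(S)$, which pins down $R_{\rho_e}$ as exactly $\{\rho_c:c^+=e\}$; the one delicate point in the verification is the domain-matching step $Sc^+=Se$, where injectivity of $\beta$ together with the fact that $\rho_c$ already carries $Sc^+$ onto $\im\beta$ is what prevents $\dom\beta$ from being strictly larger than $Sc^+$.
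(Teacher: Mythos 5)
Your proposal is correct, and while the routine implications match the paper, your treatment of the substantive direction is genuinely different. Like the paper, you dispatch (ii)$\Leftrightarrow$(iii) via Theorem~\ref{thm:leftampleorders} and (iii)$\Rightarrow$(i) via Lemma~\ref{lem:unionarclasses}(vi) together with Lemma~\ref{lem:elcircars}. For the hard direction, however, the paper proves (i)$\Rightarrow$(ii): from bisimplicity it takes an arbitrary $\alpha\in\Sigma(S)$, uses $\alpha\,\dee\,\rho_e$ to produce $\beta$ with $\alpha\,\ar\,\beta\,\el\,\rho_e$, and uses equivariance to get $\dom\beta=S(e\beta^{-1})$, so every domain in $\Sigma(S)$ is principal; applying this to $\rho_a\rho_b^{-1}$, whose domain is $(Sa\cap Sb)\rho_a^{-1}$, gives (LC) at once, and the left I-order property then comes from Theorem~\ref{thm:leftampleorders}. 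You instead prove (i)$\Rightarrow$(iii): using the same equivariance plus injectivity you identify the $\ar$-class of $\rho_e$ in $\Sigma(S)$ as $\{\rho_c:c^+=e\}\subseteq S\theta_S$ --- a fact the paper only establishes, in the last part of Theorem~\ref{thm:leftampleorders}, under the hypothesis (LC) --- and then invoke the remark following Example~\ref{ex:bicyclic} to write every element of $\Sigma(S)$ as a quotient of elements of this $\ar$-class, obtaining the left I-order property directly and recovering (LC) afterwards from Theorem~\ref{thm:leftampleorders}. Both arguments turn on the same key ingredient (elements of $\Sigma(S)$ are $S$-equivariant partial bijections with left-ideal domains), but the paper converts bisimplicity into principality of domains and hence (LC), whereas you convert it into quotient representability; your route buys the structural description of $R_{\rho_e}$ with no (LC) hypothesis, while the paper's is slightly more economical. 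Two small steps you should spell out: the containment $Sc^+\subseteq Se$ requires deducing $ec^+=c^+$ from $ec=c$ (Lemma~\ref{lem:ars} plus commuting idempotents), and $Sc^+=Se$ forces $c^+=e$ because $\el$-related idempotents in a semilattice coincide.
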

\begin{proof} We recall that the embedding of
$S$ into $\Sigma(S)$ is via what, in the terminology of
\cite{mcalister:1976}, are called one-one {\em partial right translations}.
It follows that $\Sigma(S)$ is an inverse
subsemigroup of the inverse semigroup
$\hat{S}$ of one-one partial right translations.
Thus for any $\alpha\in \Sigma(S)$, $\dom \alpha$ is a left
ideal and for any $a\in \dom\alpha$ and $x\in S$, 
$(xa)\alpha=x(a\alpha)$.

$(ii)\Rightarrow (iii)$ and $(iii)\Rightarrow (i)$ are immediate from
Lemma~\ref{lem:unionarclasses} and
Theorem~\ref{thm:leftampleorders}.

 $(i)\Rightarrow (ii)$. Suppose that $\Sigma(S)$ is bisimple, and let 
$e\in E(S)$.
For
any $\alpha\in\Sigma(S)$, we know that $\alpha\,\dee\, \rho_e$,
so that $\alpha\,\ar\, \beta\,\el\, \rho_e$ in
$\Sigma(S)$. Then
$\dom\alpha=\dom\beta$ and $\im\beta=Se$. It follows that
$\dom\beta=S(e\beta^{-1})$ so that $\dom \alpha$ is principal. Now
let $a,b\in S$; then 
\[\dom (\rho_a\rho_b^{-1})=(Sa\cap Sb)\rho_a^{-1}=Sw\]
for some $w\in S$,
and so
\[Swa=Sa\cap Sb\]
and $S$ has (LC). From Theorem~\ref{thm:leftampleorders},
$\ars\circ\el$ is universal on $S$.
\end{proof}

We recall that a left ample semigroup $S$ is {\em proper} if
$\ars\,\cap\, \sigma=\iota$, where $\sigma$ is the least right
cancellative
congruence on $S$, and where $\sigma$ is given by the formula that for any $a,b\in S$,
\[a\,\sigma\, b\Leftrightarrow ea=eb\mbox{ for some }e\in E(S).\]
Clearly, if $S$ is a subsemigroup in an inverse semigroup $Q$,
then if $a\,\sigma\, b$ in $S$, we have that $a\,\sigma\, b$ in
$Q$, but the converse may not be true. In other words, 
there is a natural morphism from
$S/\sigma$ to $Q/\sigma$, but this may not be an embedding.

\begin{thm}\label{thm:eunitary} Let $S$ be
a left ample semigroup such that $S$ is a left I-order in $Q$
where $S$ is a union of $\ar$-classes of $Q$. Then
the following conditions are equivalent:

(i) 
 $Q$ is E-unitary;

(ii) $S$ is proper and  $S/\sigma$ embeds naturally in $Q/\sigma$;

(iii) $S$ is proper and $S/\sigma$ is cancellative.

\end{thm}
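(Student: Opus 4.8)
The plan is to prove the cycle (i)$\Rightarrow$(ii)$\Rightarrow$(iii)$\Rightarrow$(i), relying throughout on two consequences of $S$ being a union of $\ar^Q$-classes and (by Lemma~\ref{lem:amplestraight}) straight. First, $S$ is a $(2,1)$-subalgebra with $a^+=aa^{-1}$ (Lemma~\ref{lem:unionarclasses}(i)), so $E(S)\subseteq E(Q)$, $\sigma_S\subseteq\sigma_Q\cap(S\times S)$, and $\ars=\ar^Q\cap(S\times S)$. Second, since a straight left I-order meets every $\el^Q$-class, every idempotent of $Q$ has the form $c^{-1}c$ with $c\in S$; consequently, for $a,b\in S$ one has the key reformulation
\[a\,\sigma_Q\,b\iff ca=cb\text{ for some }c\in S,\]
because with $f=c^{-1}c$ the equation $c^{-1}ca=c^{-1}cb$ is equivalent, after left multiplication by $c$, to $ca=cb$. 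I shall also use the standard fact that $Q$ is E-unitary iff $E(Q)$ is a single $\sigma_Q$-class, equivalently $\sigma_Q\cap\ar^Q=\iota$, and that $Q/\sigma_Q$ is a group.

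For (i)$\Rightarrow$(ii), properness is immediate: from $\sigma_Q\cap\ar^Q=\iota$ together with $\ars\subseteq\ar^Q$ and $\sigma_S\subseteq\sigma_Q$ we get $\ars\cap\sigma_S=\iota$. For the embedding I must show $\sigma_Q\cap(S\times S)\subseteq\sigma_S$. Given $a,b\in S$ with $a\,\sigma_Q\,b$, the left-congruence property gives $a^{-1}b\,\sigma_Q\,a^{-1}a\in E(Q)$, so E-unitarity forces $a^{-1}b\in E(Q)$. Then the idempotent $e=aa^{-1}bb^{-1}\in E(S)$ satisfies $ea=a(a^{-1}b)(b^{-1}a)=a(a^{-1}b)=aa^{-1}b$ (using that $a^{-1}b$ is idempotent, hence self-inverse, so $(a^{-1}b)(b^{-1}a)=(a^{-1}b)^2=a^{-1}b$) and $eb=aa^{-1}bb^{-1}b=aa^{-1}b$, whence $ea=eb$ and $a\,\sigma_S\,b$. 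Thus the natural morphism is injective.

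Step (ii)$\Rightarrow$(iii) is short: the natural morphism embeds $S/\sigma_S$ into the group $Q/\sigma_Q$, and a submonoid of a group is cancellative, so $S/\sigma_S$ is cancellative. For (iii)$\Rightarrow$(i) I first recover the embedding from cancellativity: if $a\,\sigma_Q\,b$ then $ca=cb$ for some $c\in S$ by the reformulation above, so in the cancellative monoid $S/\sigma_S$ we cancel the class of $c$ to obtain $a\,\sigma_S\,b$; hence $\sigma_Q\cap(S\times S)=\sigma_S$. Now take any $q\in Q$ with $q\,\sigma_Q\,e$ for some $e\in E(Q)$ and write $q=a^{-1}b$ with $a\,\ar^Q\,b$ (straightness). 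Since idempotents lie in the identity $\sigma_Q$-class of the group $Q/\sigma_Q$, we get $(a\sigma_Q)^{-1}(b\sigma_Q)=1$, i.e. $a\,\sigma_Q\,b$, and therefore $a\,\sigma_S\,b$ by the embedding. As also $a\,\ars\,b$, properness yields $a=b$, so $q=a^{-1}a\in E(Q)$. Thus $E(Q)$ is a single $\sigma_Q$-class and $Q$ is E-unitary.

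The main obstacle is the embedding statement, and the two observations that dissolve it: the description $E(Q)=\{c^{-1}c:c\in S\}$, which converts $\sigma_Q$ restricted to $S$ into the purely multiplicative relation ``$ca=cb$ for some $c\in S$'', and the choice of witnessing idempotent $aa^{-1}bb^{-1}\in E(S)$ that promotes $a^{-1}b\in E(Q)$ to $a\,\sigma_S\,b$. Everything else is bookkeeping with the inclusions $\ars\subseteq\ar^Q$ and $\sigma_S\subseteq\sigma_Q$, together with the fact that $S/\sigma_S$ is automatically right cancellative.
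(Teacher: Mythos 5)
Your proof is correct. Your (i)$\Rightarrow$(ii) and (ii)$\Rightarrow$(iii) essentially coincide with the paper's: the paper also deduces properness from $\ar^Q\cap\sigma_Q=\iota$, also establishes the embedding by producing the witnessing idempotent $a^+b^+=aa^{-1}bb^{-1}\in E(S)$ (it gets there via $b^+a\,\sigma_Q\,a^+b$ and $b^+a\,\ar^Q\,a^+b$, forcing $b^+a=a^+b$, where you instead force $a^{-1}b\in E(Q)$; these are parallel uses of E-unitarity), and also reads (iii) off from the embedding of $S/\sigma_S$ into the group $Q/\sigma_Q$. Where you genuinely diverge is (iii)$\Rightarrow$(i), the hardest implication. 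The paper verifies $\ar^Q\cap\sigma_Q=\iota$ head-on: given $a^{-1}b\,(\ar\cap\sigma)\,c^{-1}d$ in straight form it deduces $a\,\el\,c$ in $S$, manufactures mutually inverse elements $u,v\in S$ with $a=uc$, $c=va$, and pushes through a chain of identities (cancelling in $S/\sigma$ at $yb=yud$, then invoking properness to get $b=ud$) to conclude $a^{-1}b=c^{-1}d$. You instead use the characterisation that $Q$ is E-unitary iff $E(Q)$ is a single $\sigma_Q$-class: for $q\,\sigma_Q\,e$ written straight as $q=a^{-1}b$, the group $Q/\sigma_Q$ gives $a\,\sigma_Q\,b$; your reformulation $a\,\sigma_Q\,b\Leftrightarrow ca=cb$ for some $c\in S$ (valid since straightness makes every idempotent of $Q$ equal to $c^{-1}c$ with $c\in S$ --- the same device the paper uses with $x^{-1}x$) together with cancellativity of $S/\sigma_S$ pulls this down to $a\,\sigma_S\,b$, and properness finishes. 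Your route buys a cleaner argument and the reusable observation that (iii) already forces $\sigma_Q\cap(S\times S)=\sigma_S$, i.e.\ the embedding in (ii) drops out for free; the cost is reliance on the standard (here unproved) facts that $Q/\sigma_Q$ is a group and that E-unitarity is equivalent to $E(Q)$ being a $\sigma_Q$-class, though this is on the same footing as the paper's unproved use of the $\ar\cap\sigma=\iota$ criterion. One cosmetic slip: $S/\sigma_S$ is a cancellative semigroup, not necessarily a monoid, but your cancellation step needs only left cancellation, so nothing breaks.
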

\begin{proof}

$(i)\Rightarrow (ii)$ Suppose that $Q$ is
E-unitary, and $a,b\in S$ are such that $a\,\sigma\, b$ {\em in }
$Q$. Then $ea=eb$ for some $e\in Q$, so that
$eb^+a=ea^+b$. But $b^+a\,\ar^Q\, a^+b$ and so $b^+a=a^+b$.
This gives that $a\,\sigma\, b$ {\em in } $S$. 

Clearly, if $a,b\in S$ and $a\, (\ars\cap\sigma)\, b$ in $S$, then
$a\, (\ar\cap\sigma)\, b$ in $Q$, whence $a=b$ and $S$ is proper.

$(ii)\Rightarrow (iii)$ This is clear.

$(iii)\Rightarrow (i)$ Let $a^{-1}b,c^{-1}d\in Q$, where $a,b,c,d\in S$,
$a\,\ars\, b$ and $c\,\ars\, d$. Suppose that $a^{-1}b
\, (\ar\cap\sigma)\, c^{-1}d$ in $Q$. Then there exists $x\in S$
such that 
\[x^{-1}xa^{-1}b=x^{-1}xc^{-1}d\]
and $a^{-1}b\,\ar^Q\, c^{-1}d$. From the former, 
$xa^{-1}b=xc^{-1}d$ and from the latter, $a\,\el\, c$ in $Q$. Hence
$a\,\el\, c$ in $S$ and so there exist $u,v\in S$ with
$a=uc$ and $c=va$. We may choose $u,v$ such that
$a^+u=u$ and $c^+v=v$. Now $a=uc=uva$ so that
$a^+=uva^+$, whence $u=a^+u=uva^+u=uvu$. Similarly, $v=vuv$,
so that $u$ and $v$ are mutually inverse in both $S$ and $Q$. 

From $xa^{-1}b=xc^{-1}d$ we have that
\[xa^{-1}b=x(va)^{-1}d=xa^{-1}v^{-1}d=xa^{-1}ud.\]
But $xa^{-1}\,\el^Q\, y$ for some $y\in S$, so that
$yb=yud$ and as $S/\sigma$ is cancellative, $b\,\sigma\, ud$ in $S$.
Also, $b\,\ars\, a=uc\,\ars\, ud$ so that as $S$ is proper,
$b=ud$. Now 
\[a^{-1}b=a^{-1}ud=a^{-1}v^{-1}d=(va)^{-1}d=c^{-1}d\]
and $Q$ is $E$-unitary as required.

\end{proof}

We remark that if the conditions of Theorem~\ref{thm:eunitary} hold,
then for any $q=[a^{-1}b]\in Q/\sigma$, we have that
$q=[a]^{-1}[b]$ and so the cancellative monoid $S/\sigma$ is a left
order in the group $Q/\sigma$.

The following result is classic;
most of it follows from 
Theorem~\ref{thm:leftampleorders} 
and Corollary~\ref{cor:bisimple}.

\begin{cor}\label{cor:rcleftorders}\cite{clifford:1953,nivat:1970,mcalister:1976}. The
following conditions are equivalent for a right cancellative monoid $S$:

(i) $\Sigma(S)$ is bisimple;

(ii) $S$ has Condition (LC);

(iii) $S$ is a left I-order in $\Sigma(S)$.

If the above conditions hold, then
$S$ is the $\ar$-class of the identity of $Q$. Further, $\Sigma(S)$ is
$E$-unitary if and only if $S$ is cancellative.

Conversely, the $\ar$-class of the identity of any bisimple inverse
monoid
is right cancellative with Condition (LC).
\end{cor}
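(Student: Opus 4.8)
The plan is to specialise the machinery of the previous sections; the only genuine content is to see how the general hypotheses collapse for a right cancellative monoid $S$. Such an $S$ is exactly a left ample semigroup with a single idempotent, its identity $1$, so $E(S)=\{1\}$. The key simplification is that $\ars$ is the \emph{universal} relation on $S$: by Lemma~\ref{lem:ars}, $a\,\ars\,b$ means $xa=ya\Leftrightarrow xb=yb$ for all $x,y\in S^1$, and right cancellativity makes each side equivalent to $x=y$; hence $\ars=S\times S$ and, in particular, $\ars\circ\el=S\times S$ automatically.

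Granting this, the equivalence of (i)--(iii) is immediate: since $\ars\circ\el=S\times S$ holds for free, conditions (ii) and (iii) of Corollary~\ref{cor:bisimple} become precisely the present (ii) and (iii), so Corollary~\ref{cor:bisimple} gives (i)$\Leftrightarrow$(ii)$\Leftrightarrow$(iii). (Equivalently, (ii)$\Leftrightarrow$(iii) is Theorem~\ref{thm:leftampleorders} and (i)$\Leftrightarrow$(ii) is Corollary~\ref{cor:bisimple}.) To identify $S$ with the $\ar$-class of the identity, I would invoke the computation from Section~\ref{sec:preliminaries}: $\dom\rho_a=S$ for every $a\in S$, so $\rho_a\rho_a^{-1}=\rho_1$ for all $a$ and every generator lies in the single $\ar^{\Sigma(S)}$-class of the identity $\rho_1$; thus $S\theta_S$ is contained in that class. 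By Theorem~\ref{thm:leftampleorders}, $S\theta_S$ is moreover a union of $\ar^{\Sigma(S)}$-classes, and a nonempty union of $\ar$-classes lying inside a single class must equal it, whence $S\theta_S$ is exactly the $\ar$-class of the identity of $\Sigma(S)$.

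For the $E$-unitary clause I would compute $\sigma$. As $E(S)=\{1\}$, the defining condition $a\,\sigma\,b\Leftrightarrow ea=eb$ for some $e\in E(S)$ reduces to $a=b$, so $\sigma=\iota$. Hence $S$ is automatically proper, since $\ars\cap\sigma=\ars\cap\iota=\iota$, and $S/\sigma\cong S$. As $S$ is a left I-order in $Q=\Sigma(S)$ and, by Theorem~\ref{thm:leftampleorders}, a union of $\ar$-classes, Theorem~\ref{thm:eunitary} applies; its equivalence (i)$\Leftrightarrow$(iii) now reads: $\Sigma(S)$ is $E$-unitary if and only if $S/\sigma$ is cancellative, that is, if and only if $S$ is cancellative.

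For the converse, let $Q$ be a bisimple inverse monoid and $R=R_1$ the $\ar$-class of its identity $1$. Here $a\,\ar\,1$ means $aa^{-1}=1$, so $R$ is a submonoid (as $(ab)(ab)^{-1}=a(bb^{-1})a^{-1}=aa^{-1}=1$) and is right cancellative, since $xa=ya$ forces $x=xaa^{-1}=yaa^{-1}=y$. Thus $R$ is a right cancellative monoid, hence left ample; it is a left I-order in the bisimple semigroup $Q$ by the observation recalled in Section~\ref{sec:preliminaries}, and being a single $\ar$-class it is trivially a union of $\ar^Q$-classes. Lemma~\ref{lem:unionarclasses}(v) then supplies Condition (LC). No step poses a real obstacle; the whole proof hinges on the two collapses---$\ars$ becoming universal and $\sigma$ becoming trivial---which are precisely what reduce the general theory to Clifford's classical statement.
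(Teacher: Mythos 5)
Your proof is correct and follows essentially the same route as the paper: the equivalence of (i)--(iii) via Corollary~\ref{cor:bisimple} together with the universality of $\ars$ on a right cancellative monoid, the identification $S\theta_S=R^{\Sigma(S)}_{\rho_1}$ by combining the domain computation (the paper's Section~\ref{sec:preliminaries} remark) with the union-of-$\ar$-classes conclusion of Theorem~\ref{thm:leftampleorders}, the $E$-unitary clause via $\sigma=\iota$ and Theorem~\ref{thm:eunitary}, and the converse via the bisimplicity remark and Lemma~\ref{lem:unionarclasses}. Your version merely spells out a few verifications (right cancellativity of $R_1$, the collapse of $\ars$ and $\sigma$) that the paper leaves implicit.
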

\begin{proof} The equivalence of $(i),(ii)$ and $(iii)$ 
follows from Corollary~\ref{cor:bisimple} and the fact
that $\ars$ is universal on $S$.

Suppose that $(i),(ii)$ and $(iii)$ hold. Let $e$ be the identity of $S$. 
As remarked in Section~\ref{sec:preliminaries},
$\Sigma(S)$ is a monoid with identity $e$. Since $S$ is a single
$\ars$-class, and the embedding of $S$ into
$\Sigma(S)$ is a $(2,1)$-embedding, we have
$S\subseteq R^{\Sigma(S)}_e$. Again by
Theorem~\ref{thm:leftampleorders},
we have that $R^{\Sigma(S)}_e\subseteq S$, so that
$S=R^{\Sigma(S)}_e$. 

Since $\sigma=\iota$ on $S$, it is clear that $S$ is proper and
$S/\sigma\cong S$. From Theorem~\ref{thm:eunitary}
$\Sigma(S)$ is $E$-unitary if and only if $S$ is cancellative.

Conversely, let $R$ be the $\ar$-class of the identity of a bisimple
inverse monoid $Q$. It is easy to see that $R$ is a right cancellative
monoid, and from a comment in Section~\ref{sec:preliminaries},
we have that $R$ is a left I-order in $Q$.
Lemma~\ref{lem:unionarclasses}
tells us that $R$ has (LC).
\end{proof}

We now give a promised simplification of Theorem~\ref{thm:homs}.
First, we say that a $(2,1)$-morphism $\phi:S\rightarrow T$, where 
$S$ and $T$ are
 left ample
semigroups with Condition (LC) is {\em (LC)-preserving}
if, for any $b,c\in S$ with $Sb\cap Sc=Sw$, we have that
\[T(b\phi)\cap T(c\phi)=T(w\phi).\]
This condition is not new: it appeared originally in 
\cite{warne:1964} for right cancellative monoids with (LC), where it was called 
an {\em sl homomorphism} and subsequently (or variations thereof, and under different names) in, 
for example, \cite{gantos:1971} and \cite{mcalister:1976}. Using the
fact that for idempotents $e,f$ of an inverse semigroup $Q$, we have
that $Qe\cap Qf=Qef$, it is easy to verify that any morphism between
inverse semigroups is (LC)-preserving.

The following result was first proved in the special case of $S$ and $T$ being right cancellative in \cite{warne:1964}.

\begin{thm}\label{thm:hullhoms} Let $S$ and $T$ be left ample
semigroups with Condition (LC) and let $Q$ and $P$ be their inverse
hulls. Suppose that $\phi:S\rightarrow T$ is a $(2,1)$-morphism. Then
$\phi$ lifts to a morphism $\overline{\phi}:Q\rightarrow P$ if
and only if $\phi$ is (LC)-preserving.
\end{thm}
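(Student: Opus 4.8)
The plan is to reduce everything to Theorem~\ref{thm:homs}. Since $S$ and $T$ satisfy Condition (LC), Theorem~\ref{thm:leftampleorders} tells us that, identifying $S$ with $S\theta_S$ and $T$ with its image in $P$, $S$ is a left I-order in $Q=\Sigma(S)$ and $T$ is a left I-order in $P=\Sigma(T)$, with $S$ a union of $\ar^Q$-classes and $T$ a union of $\ar^P$-classes. Straightness of both then follows from Lemma~\ref{lem:amplestraight}, so the hypotheses of Theorem~\ref{thm:homs} are in force and $\phi$ lifts to a morphism $\overline{\phi}:Q\to P$ if and only if conditions (i) and (ii) of that theorem hold. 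Because $\phi$ is a $(2,1)$-morphism, condition (i) is automatic: as $E(S)$ is a semilattice, $a\,\ar^Q\,b$ forces $Sa^+=Sb^+$ and hence $a^+=b^+$, so $\ar^Q_S$ is just equality of $\,^+$, and since $\phi$ preserves $\,^+$ this is carried to $\ar^P_T$ (the redundancy noted after Corollary~\ref{cor:iso}). Thus the whole statement reduces to showing that (ii) is equivalent to $\phi$ being (LC)-preserving.

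The key step, and the one I expect to be the main obstacle, is an intrinsic rewriting of the ternary relation $\mathcal{T}^Q_S$. Given $a,b\in S$, use (LC) to write $Sa\cap Sb=Sw$ with $w=ua=vb$, $ua^+=u$ and $vb^+=v$; by Remarks~\ref{rem:quotients} and~\ref{rem:reverse} we then have $\rho_a\rho_b^{-1}=\rho_u^{-1}\rho_v$ and $\dom\rho_a\rho_b^{-1}=Su$. Now $ab^{-1}Q\subseteq c^{-1}Q$ is equivalent, in an inverse semigroup, to $(ab^{-1})(ab^{-1})^{-1}\leq(c^{-1})(c^{-1})^{-1}$ in the natural partial order of idempotents; since idempotents of $\Sigma(S)\subseteq\mathcal{I}_S$ are identities on their domains, the left-hand idempotent is the identity on $\dom\rho_a\rho_b^{-1}=Su$ and the right-hand one is the identity on $\dom\rho_c^{-1}=\im\rho_c=Sc$. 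Hence
\[(a,b,c)\in\mathcal{T}^Q_S\iff Su\subseteq Sc,\]
and the same translation holds verbatim in $P$. The delicate point is that the relevant witness is the domain generator $u$ of $\rho_a\rho_b^{-1}$, not the meet generator $w$ of $Sa\cap Sb$; keeping these distinct is where care is needed.

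Assume first that $\phi$ is (LC)-preserving, and take $(a,b,c)\in\mathcal{T}^Q_S$, i.e. $Su\subseteq Sc$ with the notation above. Applying the $(2,1)$-morphism $\phi$ to $w=ua=vb$ gives $w\phi=(u\phi)(a\phi)=(v\phi)(b\phi)$ with $(u\phi)(a\phi)^+=u\phi$ and $(v\phi)(b\phi)^+=v\phi$, while (LC)-preservation gives $T(a\phi)\cap T(b\phi)=T(w\phi)$. Feeding this into the computation of Remark~\ref{rem:reverse} inside $P$ identifies $u\phi$ as the domain generator of $\rho_{a\phi}\rho_{b\phi}^{-1}$, so by the translation applied in $P$ we have $(a\phi,b\phi,c\phi)\in\mathcal{T}^P_T$ if and only if $T(u\phi)\subseteq T(c\phi)$. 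Since $u=u^+u\in Su\subseteq Sc$, we may write $u=xc$ for some $x\in S$, whence $u\phi=(x\phi)(c\phi)$ and $T(u\phi)\subseteq T(c\phi)$. Thus (ii) holds and $\phi$ lifts, by Theorem~\ref{thm:homs}.

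Conversely, suppose $\phi$ lifts to a morphism $\overline{\phi}:Q\to P$ of inverse semigroups, and let $Sb\cap Sc=Sw$ in $S$. By Lemma~\ref{lem:unionarclasses}(iv) this gives $Qb\cap Qc=Qw$; rewriting the principal left ideals via their generating idempotents and using $Qe\cap Qf=Qef$, this is the idempotent equality $b^{-1}bc^{-1}c=w^{-1}w$ in $Q$. Applying $\overline{\phi}$ and using $\overline{\phi}|_S=\phi$ yields $(b\phi)^{-1}(b\phi)(c\phi)^{-1}(c\phi)=(w\phi)^{-1}(w\phi)$ in $P$, so that $P(b\phi)\cap P(c\phi)=P(w\phi)$. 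A final application of Lemma~\ref{lem:unionarclasses}(iv), this time for $T\leq P$, gives $T(b\phi)\cap T(c\phi)=T(w\phi)$, which is exactly (LC)-preservation. This completes the equivalence and hence the proof.
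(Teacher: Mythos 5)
Your proof is correct, and its overall skeleton is the paper's: reduce to Theorem~\ref{thm:homs} (after securing straightness via Theorem~\ref{thm:leftampleorders} and Lemma~\ref{lem:amplestraight}), observe that condition (i) of that theorem is automatic for a $(2,1)$-morphism, and prove that condition (ii) is equivalent to (LC)-preservation. Your forward direction is essentially the paper's: both hinge on Remark~\ref{rem:reverse} giving $ab^{-1}=u^{-1}v$ with $\dom(\rho_a\rho_b^{-1})=Su$, the only real difference being that you express membership in $\mathcal{T}^Q_S$ as $Su\subseteq Sc$ via the natural partial order on idempotents of $\mathcal{I}_S$, where the paper passes through $ab^{-1}Q=u^{-1}Q$ and Lemma~\ref{lem:unionarclasses}. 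The converse is where you genuinely depart. The paper invokes (LC) in $T$ to produce witnesses $h,k$ with $b\phi(c\phi)^{-1}=h^{-1}k$ and $T(b\phi)\cap T(c\phi)=Tz$, and then compares $u\phi$ with $h$ (and $w\phi$ with $z$) using the $\el$-relation and Lemma~\ref{lem:unionarclasses}. You instead encode $Sb\cap Sc=Sw$ as the idempotent equation $b^{-1}bc^{-1}c=w^{-1}w$ in $Q$ (combining Lemma~\ref{lem:unionarclasses}(iv) with the identity $Qe\cap Qf=Qef$), push that equation through $\overline{\phi}$ (any semigroup morphism between inverse semigroups preserves inversion), and decode it in $P$ by a second application of Lemma~\ref{lem:unionarclasses}(iv), whose hypotheses for the pair $(T,P)$ you correctly secure from Theorem~\ref{thm:leftampleorders}. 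Your route is shorter and more conceptual: it makes transparent that the (LC) data is equationally definable in the inverse hull and hence must be preserved by any lift, with no witness-chasing. The paper's argument, by contrast, stays entirely inside the quotient calculus ($bc^{-1}=u^{-1}v$) it has already developed and never needs the intersection-of-idempotent-ideals identity, so it reads as a direct continuation of the surrounding machinery; but as a self-contained verification yours is the cleaner of the two.
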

\begin{proof} For ease in this proof we identify $S$ and $T$ with
$S\theta_S$ and $T\theta_T$, respectively. We have remarked that any such $\phi$ preserves
$\ars$, and since $(\ars)^S=\ar^Q\cap(S\times S)$ and 
$(\ars)^T=\ar^P\cap (T\times T)$,  $(i)$ of Theorem~\ref{thm:homs} holds. It remains
to show that $(ii)$ of that theorem holds if and only if
$\phi$ is (LC)-preserving.

Suppose first that $\phi$ is (LC)-preserving. If $(a,b,c)\in \mathcal{T}^Q_S$,
then $ab^{-1}Q\subseteq c^{-1}Q$. Now $S$ has (LC) so that
$Sa\cap Sb=Sw$ for some $w\in S$ and $ua=vb=w$ for some
$u,v\in S$ with $ua^+=u$ and $vb^+=v$. From Remark~\ref{rem:reverse},
$ab^{-1}=u^{-1}v$ and $u\,\ars\, v$. Hence $u^{-1}vQ\subseteq c^{-1}Q$
so that $Su\subseteq Sc$ from Lemma~\ref{lem:unionarclasses}. Clearly, $T(u\phi)
\subseteq T(v\phi), u\phi a\phi =v\phi b\phi=w\phi,
u\phi (a\phi)^+=u\phi$ and $v\phi (b\phi)^+=v\phi$. As $\phi$ is (LC)-preserving,
$T(a\phi)\cap T(b\phi)=T(w\phi)$ whence $a\phi(b\phi)^{-1}
=(u\phi)^{-1}v\phi$ and it follows that $(a\phi,b\phi,c\phi)\in
\mathcal{T}^P_T$.

Conversely, suppose that $(ii)$ of Theorem~\ref{thm:homs} holds, so that
$\phi$ lifts to a morphism $\overline{\phi}:Q\rightarrow P$. Supppose
that $b,c\in S$ and $Sb\cap Sc=Sw$. We have $ub=vc=w$ for some
$u,v\in S$ with $ub^+=u,vc^+=v$ and $u\,\ars\, v$. This gives
that $bc^{-1}=u^{-1}v$ and so, applying $\overline{\phi}$,
$b\phi (c\phi)^{-1}=(u\phi)^{-1}v\phi$. As $T$ has (LC), we certainly
have that $b\phi (c\phi)^{-1}=h^{-1}k$ for some $h,k\in T$ with
$h(b\phi)=k (c\phi)=z, T(b\phi)\cap T(c\phi)=Tz$
and $h\,\ars\, k$ in $T$. From Lemma~\ref{lem:unionarclasses},
$u\phi\,\el\, h$ in $T$, so that $w\phi=(ub)\phi=u\phi\, b\phi\,\el\, h(b\phi)=z$
in $T$. We now have that
\[T(b\phi)\cap T(c\phi)=Tz=T(w\phi)\]
and $\phi$ is (LC)-preserving.
\end{proof}

The above result could (via a series of intermediate steps) be deduced 
from 
Theorem 2.6 of \cite{mcalister:1976}. For, the ample condition ensures 
that a left ample semigroup is embedded in the semigroup $\hat{S}$
 of one-to-one partial right translations of $S$ via the right regular 
representation described in Section~\ref{sec:preliminaries}. Further,
the image of $S$ is contained in $J(S)$, the set of join irreducible elements of
$\hat{S}$. By \cite[Proposition 1.14]{mcalister:1976}, if $S$ has (LC), then $J(S)$ is an inverse semigroup, which is isomorphic to our $\Sigma(S)$. The
restriction of $\theta$ in \cite[Theorem 2.6]{mcalister:1976} to $J(S)$,
with a slight adaptation of the notion of {\em permissible homomorphism}, will now
give our Theorem~\ref{thm:hullhoms}.

\section{Semilattices of inverse semigroups}\label{sec:semilattices}

We begin by setting up our notation. Let $Y$ be a semilattice and
let $S$ be a semigroup such that $S$ is the disjoint union of
subsemigroups $S_{\alpha},\alpha\in Y$, and is such that for any
$\alpha,\beta\in Y$, $S_{\alpha}S_{\beta}\subseteq S_{\alpha\beta}$.
Then we say that $S$ is a {\em semilattice $Y$ of subsemigroups
$S_{\alpha},\alpha\in Y$} and write
$S=\mathcal{S}\big(Y;S_{\alpha}\big)$.
{\em We make the convention that if we write $x_{\alpha}\in 
S$ (for any symbol $x$ and any $\alpha\in Y$), then
we mean that $x_{\alpha}\in S_{\alpha}$.}

If there exists a set of morphisms
$\phi_{\alpha,\beta}:S_{\alpha}\rightarrow
S_{\beta}$ for $\alpha\geq \beta$ such that

$(i)$ $\phi_{\alpha,\alpha}=I_{S_{\alpha}}$ for all $\alpha\in Y$;\\
and

$(ii)$ $\phi_{\alpha,\beta}\phi_{\beta,\gamma}=\phi_{\alpha, \gamma}$
for all
$\alpha,\beta,\gamma\in Y$ with $\alpha\geq\beta\geq \gamma$,
such that the binary operation in $S$ is given by the rule that
\[a_{\alpha}b_{\beta}=(a_{\alpha}\phi_{\alpha,\alpha\beta})(b_{\beta}\phi_{\beta,
\alpha\beta}),\]
where  the last
product is taken in $S_{\alpha\beta}$, then we say that
$S$ is a {\em strong semilattice $Y$ of semigroups
$S_{\alpha},\alpha\in Y$, with connecting morphisms
$\phi_{\alpha,\beta},
\alpha\geq \beta$} and write
$S=\mathcal{S}\big(Y;S_{\alpha};\phi_{\alpha,\beta}\big)$.

Let $Q=\big(Y;Q_{\alpha};\psi_{\alpha,\beta}\big)$
be a strong semilattice of {\em bisimple inverse monoids}
$Q_{\alpha},\alpha\in Y$, such that the connecting morphisms are
monoid morphisms. It follows that
the set $E$ of identities $E=\{ e_{\alpha}:\alpha\in Y\}$ forms
a subsemigroup, indeed a semilattice isomorphic
to $Y$.  Let $R_{\alpha}$ denote the $\ar$-class of
$e_{\alpha}$ in $Q_{\alpha}$. Then it is easy to see
that $S=\mathcal{S}\big(Y;R_{\alpha};\phi_{\alpha,\beta}\big)$ is a strong semilattice of right
cancellative monoids, where $\phi_{\alpha,\beta}=
\psi_{\alpha,\beta}|_{R_{\alpha}}$. In \cite{gantos:1971}, 
Gantos showed how to recover the structure of $Q$ from that of $S$; in
our
terminology, $Q$ is a semigroup of left I-quotients of $S$
and the morphisms $\phi_{\alpha,\beta}$ satisfy Condition
(LC).

In this section we revisit and generalise Gantos's result. We believe
that its correct context is that of inverse hulls of left ample
semigroups, and we show that his result can be naturally extended to
strong
semilattices
of left ample semigroups. Gantos uses an explicit construction of
quotients
involving ordered pairs subject to an equivalence relation - we avoid
all such technicalities by using our results concerning lifting of morphisms.

We first
observe that the `strong' in Gantos's result is automatic. The proof of
the following is entirely routine, but we provide it for completeness.

\begin{lem}\label{lem:strong} 
Let $P=\mathcal{S}\big(Y;M_{\alpha}\big)$ where
each $M_{\alpha}$ is a monoid with identity
$e_{\alpha}$, such that $E=\{ e_{\alpha}:\alpha\in Y\} $
is a subsemigroup of $P$. Then $E$ is a semilattice isomorphic to $Y$
and $E$ is central in $P$.

If we define $\phi_{\alpha,\beta}:M_{\alpha}\rightarrow M_{\beta}$ by
$a_{\alpha}\phi_{\alpha,\beta}=a_{\alpha}e_{\beta}$, 
where $\alpha\geq \beta$, then each $\phi_{\alpha,\beta}$ is a monoid
morphism, and $P=\mathcal{S}\big(Y;M_{\alpha};\phi_{\alpha,\beta}\big)$.
\end{lem}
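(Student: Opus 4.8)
The statement bundles several routine claims: that $E$ is a semilattice isomorphic to $Y$, that $E$ is central in $P$, that each $\phi_{\alpha,\beta}$ is a monoid morphism, and that $P$ reconstitutes as a \emph{strong} semilattice via these maps. I would verify these in that order, since each later claim leans on the earlier ones.

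First I would establish that $E=\{e_\alpha:\alpha\in Y\}$ is a semilattice isomorphic to $Y$. By hypothesis $E$ is a subsemigroup, and since $M_\alpha M_\beta\subseteq M_{\alpha\beta}$ we have $e_\alpha e_\beta\in M_{\alpha\beta}$; but $e_\alpha e_\beta$ is an idempotent (as $E$ is closed and each $e_\gamma$ is the unique identity hence the unique idempotent of the monoid $M_\gamma$), so $e_\alpha e_\beta=e_{\alpha\beta}$. The map $\alpha\mapsto e_\alpha$ is thus a semigroup morphism onto $E$, and it is injective because the $M_\alpha$ are disjoint; this gives $E\cong Y$ as semilattices.

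\emph{Centrality} is the one step that needs a genuine (if short) argument, and I expect it to be the crux. Take $a_\alpha\in M_\alpha$ and $e_\beta\in E$. Both products $a_\alpha e_\beta$ and $e_\beta a_\alpha$ lie in $M_{\alpha\beta}$. The idea is that multiplying by the identity $e_\beta$ of $M_\beta$ acts, after landing in $M_{\alpha\beta}$, as multiplication by $e_{\alpha\beta}=e_\alpha e_\beta$, which is the identity of $M_{\alpha\beta}$. Concretely, $a_\alpha e_\beta=a_\alpha e_\alpha e_\beta=a_\alpha e_{\alpha\beta}$, and since $e_{\alpha\beta}$ is the identity of the monoid $M_{\alpha\beta}$ containing $a_\alpha e_\beta$, one checks $a_\alpha e_\beta$ is fixed on the appropriate side; carrying out the same computation on the other side gives $e_\beta a_\alpha=e_{\alpha\beta}a_\alpha$, and comparing the two via the monoid identity in $M_{\alpha\beta}$ forces $a_\alpha e_\beta=e_\beta a_\alpha$. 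This is where one must be careful that left and right multiplication by $e_{\alpha\beta}$ genuinely agree on elements of $M_{\alpha\beta}$, which they do precisely because $e_{\alpha\beta}$ is a \emph{two-sided} identity there.

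Granting centrality, the remaining steps are formal. Each $\phi_{\alpha,\beta}\colon a_\alpha\mapsto a_\alpha e_\beta$ is a morphism: for $a_\alpha,b_\alpha\in M_\alpha$, $(a_\alpha b_\alpha)e_\beta=a_\alpha(b_\alpha e_\beta)=a_\alpha(e_\beta b_\alpha e_\beta)=(a_\alpha e_\beta)(b_\alpha e_\beta)$ using centrality and $e_\beta=e_\beta^2$, and it sends $e_\alpha$ to $e_\alpha e_\beta=e_{\alpha\beta}=e_\beta$, so it is a monoid morphism. The strong-semilattice axioms are then immediate: $\phi_{\alpha,\alpha}=I_{M_\alpha}$ because $e_\alpha$ is the identity of $M_\alpha$; the cocycle condition $\phi_{\alpha,\beta}\phi_{\beta,\gamma}=\phi_{\alpha,\gamma}$ follows from $e_\beta e_\gamma=e_{\beta\gamma}=e_\gamma$ when $\beta\geq\gamma$; and finally the product formula is verified by $a_\alpha b_\beta=a_\alpha e_{\alpha\beta}e_{\alpha\beta}b_\beta=(a_\alpha\phi_{\alpha,\alpha\beta})(b_\beta\phi_{\beta,\alpha\beta})$, again invoking centrality of $E$ to insert and shuffle the idempotent $e_{\alpha\beta}$. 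This establishes $P=\mathcal{S}\big(Y;M_\alpha;\phi_{\alpha,\beta}\big)$ as required.
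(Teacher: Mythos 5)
Your proposal is correct in substance and follows essentially the same route as the paper: the crux in both is the sandwich computation $a_\alpha e_{\alpha\beta}=e_{\alpha\beta}(a_\alpha e_{\alpha\beta})=(e_{\alpha\beta}a_\alpha)e_{\alpha\beta}=e_{\alpha\beta}a_\alpha$, which works because $e_{\alpha\beta}$ is a two-sided identity for the component $M_{\alpha\beta}$ in which both products lie. (The paper first proves centrality in the case $\alpha\geq\beta$ and then reduces the general case to it; you reduce first, via $e_\alpha e_\beta=e_{\alpha\beta}$ --- an immaterial reorganisation. The formal verifications at the end, which the paper labels ``easy to see'', you carry out explicitly.)

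One justification is wrong as stated, however: you assert that $e_\gamma$, being the identity of $M_\gamma$, is the \emph{unique idempotent} of $M_\gamma$. That is false for monoids in general (a monoid may have many idempotents; any semilattice with an identity is an example), so you cannot deduce $e_\alpha e_\beta=e_{\alpha\beta}$ on those grounds. The deduction is nevertheless immediate from facts you already invoke elsewhere: $E$ is a subsemigroup, so $e_\alpha e_\beta\in E$, and $e_\alpha e_\beta\in M_\alpha M_\beta\subseteq M_{\alpha\beta}$; since the components $M_\delta$ are pairwise disjoint, $E\cap M_{\alpha\beta}=\{e_{\alpha\beta}\}$, whence $e_\alpha e_\beta=e_{\alpha\beta}$. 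With that one-line repair, everything else you wrote --- centrality, the morphism property of $\phi_{\alpha,\beta}$, the axioms $\phi_{\alpha,\alpha}=I_{M_\alpha}$ and $\phi_{\alpha,\beta}\phi_{\beta,\gamma}=\phi_{\alpha,\gamma}$, and the product formula --- goes through exactly as you have it.
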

\begin{proof} Let $a_{\alpha}\in M_{\alpha}$ and suppose first that $\alpha\geq
\beta$. Then
\[a_{\alpha}e_{\beta}=e_{\beta}(a_{\alpha}e_{\beta})=
(e_{\beta}a_{\alpha})e_{\beta}=e_{\beta}a_{\alpha}.\]
Now, for arbitrary $e_{\gamma}$,
\[a_{\alpha}e_{\gamma}=(a_{\alpha}e_{\gamma})e_{\alpha\gamma}=
a_{\alpha}(e_{\gamma}e_{\alpha\gamma})=
a_{\alpha}e_{\alpha\gamma}=e_{\alpha\gamma}a_{\alpha}=
(e_{\alpha\gamma}e_{\gamma})a_{\alpha}=
e_{\alpha\gamma}(e_{\gamma}a_{\alpha})=
e_{\gamma}a_{\alpha},\]
so that $E$ is central in $P$. 

It is easy to see that for $\alpha\geq \beta$, $\phi_{\alpha,\beta}
:M_{\alpha}\rightarrow M_{\beta}$ is a monoid morphism, 
$\phi_{\alpha,\alpha}=I_{M_{\alpha}}$ and for
$\alpha\geq\beta\geq \gamma$, 
$\phi_{\alpha,\gamma}=\phi_{\alpha,\beta}\phi_{\beta,\gamma}$.
Let $Q=\mathcal{S}\big(Y;M_{\alpha};
\phi_{\alpha,\beta}\big)$ and denote the binary operation
in $Q$ by $*$.

For $a_{\alpha},b_{\beta}\in M$ we
have
\[a_{\alpha}*b_{\beta}=(a_{\alpha}\phi_{\alpha,\alpha\beta})
(b_{\beta}\phi_{\beta,\alpha\beta})=
(a_{\alpha}e_{\alpha\beta})(b_{\beta}e_{\alpha\beta})=
(a_{\alpha}b_{\beta})e_{\alpha\beta}=
a_{\alpha}b_{\beta},\]
as required.
\end{proof}

\begin{prop}\label{prop:slample} Let $S=\mathcal{S}\big(
Y;S_{\alpha};\phi_{\alpha,\beta}\big)$,
where each $S_{\alpha}$ is left ample and the connecting morphisms are 
$(2,1)$-morphisms.

(i) The semigroup $S$ is left ample, and for any $a,b\in S$,
$a\,\ars\, b$ in $S$ if and only if $
a,b\in S_{\alpha}$ for some $\alpha\in Y$ and
$a\,\ars\, b$ in $S_{\alpha}$.

(ii) If each $S_{\alpha}$ has (LC), then $S$ has
(LC) if and only if every $\phi_{\alpha,\beta}, \alpha\geq
\beta$, is (LC)-preserving.
\end{prop}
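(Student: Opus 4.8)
I want to prove Proposition~\ref{prop:slample}(ii): assuming each $S_\alpha$ has (LC), the whole semilattice $S=\mathcal{S}\big(Y;S_\alpha;\phi_{\alpha,\beta}\big)$ has (LC) if and only if every connecting morphism $\phi_{\alpha,\beta}$ (for $\alpha\geq\beta$) is (LC)-preserving. The natural strategy is to reduce the computation of $Sa\cap Sb$ for arbitrary $a,b\in S$ to a computation inside a single block $S_\gamma$, where (LC) is already available, and then read off exactly what extra information is needed to conclude that the intersection is again principal. Throughout I will use part~(i) of the proposition freely: $S$ is left ample, and $a\,\ars\,b$ in $S$ exactly when $a,b$ lie in a common block $S_\alpha$ and are $\ars$-related there. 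I will also use the convention from the paper that $x_\alpha\in S$ means $x_\alpha\in S_\alpha$, and the multiplication formula $a_\alpha b_\beta=(a_\alpha\phi_{\alpha,\alpha\beta})(b_\beta\phi_{\beta,\alpha\beta})$.

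The key preliminary step is to describe the left ideal $Sa_\alpha$ for $a_\alpha\in S_\alpha$. An arbitrary element $x_\delta a_\alpha$ equals $(x_\delta\phi_{\delta,\delta\alpha})(a_\alpha\phi_{\alpha,\delta\alpha})$, which lies in $S_{\delta\alpha}(a_\alpha\phi_{\alpha,\delta\alpha})$. So I expect
\[
Sa_\alpha=\bigcup_{\gamma\leq\alpha}S_\gamma\,(a_\alpha\phi_{\alpha,\gamma}),
\]
the union being disjoint across the blocks $S_\gamma$ for $\gamma\leq\alpha$. Given a second element $b_\beta$, the intersection $Sa_\alpha\cap Sb_\beta$ then decomposes blockwise: in block $S_\gamma$ (for $\gamma\leq\alpha\beta$) it is
\[
S_\gamma(a_\alpha\phi_{\alpha,\gamma})\cap S_\gamma(b_\beta\phi_{\beta,\gamma}),
\]
an intersection of two principal left ideals inside the single left ample semigroup $S_\gamma$, which \emph{has} (LC). Thus each block-piece is principal in $S_\gamma$, generated by some $w_\gamma$ depending on $\gamma$. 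The whole question becomes whether these locally principal pieces assemble into a single principal left ideal of $S$.

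Now I would do both directions. For sufficiency, assume every $\phi_{\alpha,\beta}$ is (LC)-preserving. Compute $S_{\alpha\beta}(a_\alpha\phi_{\alpha,\alpha\beta})\cap S_{\alpha\beta}(b_\beta\phi_{\beta,\alpha\beta})=S_{\alpha\beta}w$ for some $w\in S_{\alpha\beta}$ using (LC) in $S_{\alpha\beta}$. The (LC)-preserving hypothesis applied to the morphisms $\phi_{\alpha\beta,\gamma}$ should then force the block-piece in each lower block $S_\gamma$ to be exactly $S_\gamma(w\phi_{\alpha\beta,\gamma})$, so that the full intersection is $Sw$ and $S$ has (LC). For necessity, assume $S$ has (LC) and take $b_\alpha,c_\alpha\in S_\alpha$ with $S_\alpha b_\alpha\cap S_\alpha c_\alpha=S_\alpha w_\alpha$; I want to show $S_\beta(b_\alpha\phi_{\alpha,\beta})\cap S_\beta(c_\alpha\phi_{\alpha,\beta})=S_\beta(w_\alpha\phi_{\alpha,\beta})$ for each $\beta\leq\alpha$. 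By (LC) in $S$, the intersection $Sb_\alpha\cap Sc_\alpha$ is principal, say $=Sd$; comparing its block-$\alpha$ piece with $S_\alpha w_\alpha$ identifies the generator (up to $\el$ in $S_\alpha$), and projecting down to $S_\beta$ via the ideal-description above should yield the desired (LC)-preserving equation.

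\textbf{The main obstacle.} The delicate point is the necessity direction: extracting a \emph{single} global generator from the hypothesis that $S$ has (LC) and then controlling its image in an arbitrary lower block. I must verify that the generator $d$ of $Sb_\alpha\cap Sc_\alpha$ can be taken in $S_\alpha$ itself (equivalently, that the top block $S_\alpha$ already contains the generator, which should follow because $d\in Sb_\alpha\subseteq\bigcup_{\gamma\leq\alpha}S_\gamma$ together with $d$ generating a left ideal meeting block $\alpha$), and that $d\phi_{\alpha,\beta}$ generates precisely the block-$\beta$ piece. Care is needed because the block-decomposition of $Sd$ in block $S_\beta$ is $S_\beta(d\phi_{\alpha,\beta})$ only when $d\in S_\alpha$, so pinning $d$ into the top block is exactly where the argument must be watertight; the left ample identity (AL) and the $\ars$-description from part~(i) are the tools I expect to need there.
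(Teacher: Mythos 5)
Your proposal is correct, and the steps you defer do go through: the blockwise decomposition $Sa_\alpha=\bigcup_{\gamma\leq\alpha}S_\gamma(a_\alpha\phi_{\alpha,\gamma})$ holds because $s_\gamma a_\alpha=s_\gamma(a_\alpha\phi_{\alpha,\gamma})$ for $\gamma\leq\alpha$ and the blocks are disjoint, and your pinning argument for the generator $d$ of $Sb_\alpha\cap Sc_\alpha$ (membership $d\in Sb_\alpha$ forces $d$'s block to lie below $\alpha$, while $w_\alpha\in Sd$ forces it to lie above) is exactly the argument the paper uses. Indeed the logical skeleton matches the paper's proof throughout: compute the intersection in the meet block $S_{\alpha\beta}$ using (LC) there, propagate the generator downward by (LC)-preservation of the connecting morphisms, and, for necessity, pin the global generator into the top block. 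The difference is organizational, and it is a real one: the paper has no analogue of your decomposition lemma; it works element-by-element, using $d=d^+d$ to push factors into the correct block, and in the necessity direction it first proves a stand-alone ``bridge'' equivalence --- for $a_\alpha,b_\alpha,c_\alpha\in S_\alpha$, one has $S_\alpha a_\alpha\cap S_\alpha b_\alpha=S_\alpha c_\alpha$ if and only if $Sa_\alpha\cap Sb_\alpha=Sc_\alpha$ --- and then invokes (LC) of the lower block $S_\beta$ to manufacture a generator $d_\beta$ of $S_\beta(a_\alpha\phi_{\alpha,\beta})\cap S_\beta(b_\alpha\phi_{\alpha,\beta})$, which it finally identifies with $c_\alpha\phi_{\alpha,\beta}$. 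Your route buys uniformity: one decomposition serves both directions, and in necessity the block-$\beta$ piece of $Sd$ is principal automatically (generated by $d\phi_{\alpha,\beta}$), so you never need (LC) of the lower blocks there. What the paper's route buys is the bridge equivalence as an explicitly isolated fact, which it re-uses later (in the proof of Theorem~\ref{thm:semilatticesleftample}); if you follow your organization, you would want to record that consequence of your decomposition separately for later use.
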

\begin{proof} $(i)$ Let $f_{\alpha},g_{\beta}\in E(S)$; then
\[f_{\alpha}g_{\beta}=(f_{\alpha}\phi_{\alpha,\alpha\beta})(
g_{\b}\phi_{\beta,\alpha\beta})=(g_{\beta}\phi_{\beta,\alpha\beta})(
f_{\alpha}\phi_{\alpha,\alpha\beta})=g_{\beta}f_{\alpha},\]
using the fact that $E(S_{\alpha\beta})$ is a semilattice. Thus
$E(S)$ is a semilattice.

Suppose now that $a_{\alpha}\,(\ars)^{S}\, b_{\beta}$. Let
$f_{\alpha}$ be the idempotent in the 
$(\ars)^{S_{\alpha}}$-class of $a_{\alpha}$.
Then as $f_{\alpha}a_{\alpha}=a_{\alpha}$ we must also
have that $f_{\alpha}b_{\beta}=
b_{\beta}$ so that $\beta\leq \alpha$. With the dual we
obtain that $\alpha=\beta$; clearly, then
$a_{\alpha}\,(\ars)^{S_{\alpha}}
\,
b_{\alpha}$.

Conversely, suppose that $a_{\alpha}\,(\ars)^{S_{\alpha}}
\,
b_{\alpha}$ and $x_{\gamma}a_{\alpha}=y_{\delta}a_{\alpha}$. Then
$\gamma\alpha=\delta\alpha=\mu$, say, and
$(x_{\gamma}\phi_{\gamma,\mu})(a_{\alpha}\phi_{\alpha,
\mu})
=(y_{\delta}\phi_{\delta,\mu})(a_{\alpha}\phi_{\alpha,\mu})$.
But $\phi_{\alpha,\mu}$ is a $(2,1)$-morphism, and 
$a_{\alpha}\,(\ars)^{S_{\alpha}}
\,
b_{\alpha}$, so that $a_{\alpha}\phi_{\alpha,\mu}\,(\ars)^{S_{\mu}}
\,
b_{\alpha}\phi_{\alpha,\mu}$. We thus obtain that 
$(x_{\gamma}\phi_{\gamma,\mu})(b_{\alpha}\phi_{\alpha,
\mu})
=(y_{\delta}\phi_{\delta,\mu})(b_{\alpha}\phi_{\alpha,\mu})$
and hence $x_{\gamma}b_{\alpha}=y_{\delta}b_{\alpha}$. Making an easy
adjustment
for $x_{\gamma}=1$ yields that 
$a_{\alpha}\, (\ars)^S\, b_{\alpha}$.

 Notice that from the above, there is no ambiguity
in the use of the superscript $^+$. 
To see that $S$ is left ample, let $a_{\alpha}\in S$ and
$f_{\beta}\in E(S)$. Then
\[(a_{\alpha}f_{\beta})^+a_{\alpha}=
((a_{\alpha}\phi_{\alpha,\alpha\beta})(f_{\beta}\phi_{\beta,\alpha\beta}))^+
(a_{\alpha}\phi_{\alpha,\alpha\beta})=
(a_{\alpha}\phi_{\alpha,\alpha\beta})(f_{\beta}\phi_{\beta,\alpha\beta}),\]
using the fact that $S_{\alpha\beta}$ is left ample, so that
$(a_{\alpha}f_{\beta})^+a_{\alpha}=a_{\alpha}f_{\beta}$ as
required.

$(ii)$ Suppose that each $S_{\alpha}$ has (LC). 

Assume first
that each $\phi_{\alpha,\beta}$ is (LC)-preserving. Let
$a_{\alpha},b_{\beta}\in S$ and let $\gamma=\alpha\beta$. As
$S_{\gamma}$ has (LC) we know that 
\[S_{\gamma}a_{\alpha}\cap S_{\g}b_{\beta}
=S_{\gamma}(a\phi_{\alpha,\gamma})
\cap S_{\gamma}(b\phi_{\beta, \gamma})=S_{\gamma}c_{\gamma},\]
for some $c_{\gamma}$. We claim that $Sa_{\alpha}\cap Sb_{\beta}=
Sc_{\gamma}$.

Certainly $c_{\gamma}=x_{\gamma}a_{\alpha}=y_{\gamma}b_{\beta}$
for some $x_{\gamma},y_{\gamma}\in S_{\g}$, so that
$c_{\g}\in Sa_{\a}\cap Sb_{\b}$ and so
\[Sc_{\g}\subseteq Sa_{\a}\cap Sb_{\b}.\] 

On the other hand, let $d\in Sa_{\a}\cap Sb_{\b}$; then there
are elements $u_{\mu}, v_{\nu}\in S$ with
$d=u_{\mu}a_{\a}=v_{\nu}b_{\b}$. Let $\tau=\mu\a=\nu\b$,
so that $\tau\leq \gamma$. Then
\[d=d_{\tau}=d_{\tau}^+d_{\tau}=
(d^+_{\tau}u_{\mu})a_{\a}=(d^+_{\tau}v_{\nu})b_{\b}\in S_{\tau}a_{\a}
\cap S_{\tau}b_{\b}.\] 
Now $\phi_{\g,\tau}$ is (LC)-preserving, so that
$S_{\tau}a_{\a}\cap S_{\tau}b_{\b}=S_{\tau}c_{\g}$. This
gives that $d=z_{\tau}c_{\g}\in Sc_{\g}$. Hence 
$Sa_{\alpha}\cap Sb_{\beta}=
Sc_{\gamma}$ as required.

\bigskip

Conversely, assume that $S$ has (LC) and suppose
that $\alpha\geq \beta$; we must show that
$\phi_{\a,\b}$ is (LC)-preserving. 

 We first show that for any $a_{\a},b_{\a},c_{\a}\in S$, 
 \[S_{\a}a_{\a}\cap S_{\a}b_{\a}=S_{\a}c_{\a}\Leftrightarrow
Sa_{\a}\cap Sb_{\a}=Sc_{\a}.\]

\noindent($\Leftarrow$) If $Sa_{\a}\cap Sb_{\a}=Sc_{\a}$, we have that
\[c_{\a}=ua_{\a}=vb_{\a}=(ua^+_{\a})a_{\a}=(vb^+_{\a})b_{\a}
\in S_{\a}a_{\a}\cap S_{\a}b_{\a},\]
so that $S_{\a}c_{\a}\subseteq S_{\a}a_{\a}\cap S_{\a}b_{\a}$.
On the other hand, if $x_{\a},y_{\a}\in S_{\a}$ and
\[x_{\a}a_{\a}=y_{\a}b_{\a}\in S_{\a}a_{\a}\cap S_{\a}b_{\a}
\subseteq Sa_{\a}\cap Sb_{\a},\]
then \[x_{\a}a_{\a}=y_{\a}b_{\a}=zc_{\a}
=(zc_{\a}^+)c_{\a}\in S_{\a}c_{\a}.\]
Thus
$S_{\a}a_{\a}\cap S_{\a}b_{\a}\subseteq S_{\a}c_{\a}$
and we have $S_{\a}a_{\a}\cap S_{\a}b_{\a}= S_{\a}c_{\a}$
as desired.

\noindent($\Rightarrow$) Conversely, suppose that $S_{\a}a_{\a}
\cap S_{\a}b_{\a}=S_{\a}c_{\a}$.
We also know that $Sa_{\a}\cap Sb_{\a}=Sd_{\beta}$ for some $d_{\b}\in S$.
As $d_{\b}\in Sa_{\a}$ we have that $\beta\leq \a$, but
$c_{\a}\in Sd_{\b}$, so that $\a=\b$. From ($\Leftarrow$)
we have that $S_{\a}a_{\a}
\cap S_{\a}b_{\a}=S_{\a}d_{\a}$, so that $c_{\a}\,\el\, d_{\a}$ in
$S_{\a}$ and hence in $S$. Consequently,
$Sa_{\a}\cap Sb_{\a}=Sc_{\a}$.

We now return to the argument that $\phi_{\a,\b}$ is (LC)-preserving (for
$\alpha\geq\b$). Suppose that 
 $a_{\a},b_{\a},c_{\a}\in S$ and 
 $S_{\a}a_{\a}\cap S_{\a}b_{\a}=S_{\a}c_{\a}$. We know that
$c_{\a}=x_{\a}a_{\a}=y_{\a}b_{\a}$ for some $x_{\a},y_{\a}$, so that
$c_{\a}\phi_{\a,\b}=(x_{\a}\phi_{\a,\b})(a_{\a}\phi_{\a,\b})=
(y_{\a}\phi_{\a,\b})(b_{\a}\phi_{\a,\b})$, giving that
\[c_{\a}\phi_{\a,\b}\in S_{\b}(a_{\a}\phi_{\a,\b})\cap 
S_{\b}(b_{\a}\phi_{\a,\b})=S_{\b}d_{\b}\] for some
$d_{\b}$. From the above, $Sa_{\a}\cap Sb_{\a}
=Sc_{\a}$. We have that for some $u_{\b},v_{\b}$,
\[d_{\b}=u_{\b}(a_{\a}\phi_{\a,\b})=
v_{\b}(b_{\a}\phi_{\a,\b})= u_{\b}a_{\a}=v_{\b}b_{\a}\in Sa_{\a}\cap
Sb_{\a}=Sc_{\a}\]
so that $d_{\b}=z_{\gamma}c_{\a}=((z_{\gamma}c_{\a})^+z_{\gamma})
(c_{\a}\phi_{\a,\b})\in S_{\b}(c_{\a}\phi_{\a,\b})$. 
It follows that
\[S_{\b}(a_{\a}\phi_{\a,\b})\cap 
S_{\b}(b_{\a}\phi_{\a,\b})=S_{\b}d_{\b}=S_{\b}(c_{\a}\phi_{\a,\b})\]
and $\phi_{\a,\b}$ has (LC).
\end{proof}

We can now give the main result of this section.

\begin{thm}\label{thm:semilatticesleftample} Let
$S=\mathcal{S}\big(Y;S_{\alpha};\phi_{\alpha,\beta}\big)$ be a 
strong semilattice of left ample semigroups $S_{\alpha}$,
such that  the connecting morphisms
are $(2,1)$-morphisms. Suppose that each
$S_{\alpha},\alpha\in Y$ has (LC) and that $S$ has (LC).

For each $\alpha\in Y$, let $Q_{\alpha}$ be the inverse hull of
$S_{\alpha}$. Then for any $\alpha,\beta\in Y$ with $\alpha\geq \beta$,
we have that $\phi_{\alpha,\beta}$ lifts to a morphism
$\overline{\phi_{\alpha,\beta}}:Q_{\alpha}
\rightarrow Q_{\beta}$. Further, 
$Q=\mathcal{S}\big(Y;Q_{\alpha};\overline{\phi_{\alpha,\beta}}\big)$
is a strong semilattice of inverse semigroups, such
that $S$ is a straight left I-order in $Q$. 

Moreover,
$Q$ is isomorphic to the inverse hull of $S$. 
\end{thm}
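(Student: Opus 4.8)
The plan is to build $Q$ out of the component inverse hulls $Q_\alpha$ using the liftings supplied by Theorem~\ref{thm:hullhoms}, check by hand that the resulting strong semilattice is an inverse semigroup containing $S$ as a straight left I-order, and then invoke Corollary~\ref{cor:iso} to identify $Q$ with $\Sigma(S)$. The real work is concentrated in the last step.

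First I would produce the connecting morphisms of $Q$. Since each $S_\alpha$ and $S$ itself have (LC), Proposition~\ref{prop:slample}(ii) shows every $\phi_{\alpha,\beta}$ (for $\alpha\geq\beta$) is (LC)-preserving; being also a $(2,1)$-morphism of left ample semigroups with (LC), Theorem~\ref{thm:hullhoms} gives a lift $\overline{\phi_{\alpha,\beta}}\colon Q_\alpha\to Q_\beta$. The uniqueness clause of Theorem~\ref{thm:homs} then does the bookkeeping: $I_{Q_\alpha}$ lifts $\phi_{\alpha,\alpha}=I_{S_\alpha}$, so $\overline{\phi_{\alpha,\alpha}}=I_{Q_\alpha}$, and for $\alpha\geq\beta\geq\gamma$ the composite $\overline{\phi_{\alpha,\beta}}\,\overline{\phi_{\beta,\gamma}}$ lifts $\phi_{\alpha,\gamma}$ and therefore equals $\overline{\phi_{\alpha,\gamma}}$. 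Hence $Q=\mathcal{S}\big(Y;Q_\alpha;\overline{\phi_{\alpha,\beta}}\big)$ is a genuine strong semilattice, and it is inverse: each $q_\alpha\in Q_\alpha$ keeps the inverse it has in $Q_\alpha$, while for idempotents $e\in E(Q_\alpha)$, $f\in E(Q_\beta)$ both $ef$ and $fe$ are computed in $Q_{\alpha\beta}$ from the idempotents $e\overline{\phi_{\alpha,\alpha\beta}}$ and $f\overline{\phi_{\beta,\alpha\beta}}$, so they agree.

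Next I would verify that $S$ sits in $Q$ as a straight left I-order. The component embeddings assemble to an embedding of $S$ into $Q$: because $\overline{\phi_{\alpha,\beta}}$ restricts to $\phi_{\alpha,\beta}$, the product computed in $S$ (inside $S_{\alpha\beta}$) agrees with that computed in $Q$ (inside $Q_{\alpha\beta}$), and $^+$ is preserved since $\ars$ lives inside single components by Proposition~\ref{prop:slample}(i). Any $q\in Q$ lies in some $Q_\alpha$, where Theorem~\ref{thm:leftampleorders} gives $q=a^{-1}b$ with $a,b\in S_\alpha\subseteq S$ and the inverse taken componentwise, i.e. in $Q$; so $S$ is a left I-order in $Q$, and it is straight by Lemma~\ref{lem:amplestraight} as $S$ is left ample and embedded as a $(2,1)$-subalgebra. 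Moreover $S$ is a union of $\ar^Q$-classes: $\ar^Q$-related elements have equal domain idempotents and so lie in a common $Q_\alpha$, within which $\ar^Q$ restricts to $\ar^{Q_\alpha}$, and each $S_\alpha$ is a union of $\ar^{Q_\alpha}$-classes by Theorem~\ref{thm:leftampleorders}. Thus Lemma~\ref{lem:unionarclasses} is available for $Q$.

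Finally, to identify $Q$ with $\Sigma(S)$, I would apply Corollary~\ref{cor:iso} to $\theta_S\colon S\to\Sigma(S)$, noting that $S\theta_S$ is likewise a straight left I-order in $\Sigma(S)$ (Theorem~\ref{thm:leftampleorders}, Lemma~\ref{lem:amplestraight}) and a union of $\ar$-classes. Condition (i) of the corollary is automatic because $S$ is left ample and $\theta_S$ preserves $^+$ (the remark after Corollary~\ref{cor:iso}). The crux, which I expect to be the main obstacle, is condition (ii): that $\mathcal{T}^Q_S$ and $\mathcal{T}^{\Sigma(S)}_{S\theta_S}$ coincide. I would dispose of it by showing $\mathcal{T}$ is \emph{intrinsic} to $S$. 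Concretely, for any inverse semigroup $K$ in which $S$ is a left I-order and a union of $\ar^K$-classes, straightness lets me write $ab^{-1}=s^{-1}t$ with $s,t\in S$ and $s\,\ar\,t$, so $ab^{-1}K=s^{-1}K$ and $(a,b,c)\in\mathcal{T}^K_S$ reduces to $s^{-1}\leq_{\mathcal{R}}c^{-1}$, equivalently $Ss\subseteq Sc$ by Lemma~\ref{lem:unionarclasses}(iii). Here Lemma~\ref{lem:nassersuperlemma} gives $sa=tb$, and Lemma~\ref{lem:unionarclasses}(iv),(v) identify $S(sa)=Sa\cap Sb$; the $\el$-class $Ss$ is then pinned down by $a,b$ alone, since $s^{-1}s=ab^{-1}ba^{-1}$ forces $Ss=Su$ for the reduced $u=ua^+$ with $ua$ generating $Sa\cap Sb$. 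As this description of $\mathcal{T}^K_S$ refers only to the multiplication of $S$ and to the intrinsic data $Sx\subseteq Sy$ and $Sx\cap Sy=Sz$, it returns the same relation for $K=Q$ and for $K=\Sigma(S)$. Condition (ii) follows, and Corollary~\ref{cor:iso} yields an isomorphism $Q\to\Sigma(S)$ fixing $S$.
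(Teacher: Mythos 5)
Your proof is correct, and up to the point where $Q$ is assembled and shown to contain $S$ as a straight left I-order that is a union of $\ar^Q$-classes, it follows the paper's line (the paper dispatches the bookkeeping for $\overline{\phi_{\alpha,\alpha}}$ and the composites, and the union-of-$\ar$-classes claim, with a ``clearly''/``easy to check''; your appeal to the uniqueness clause of Theorem~\ref{thm:homs} is exactly the right justification). The genuine divergence is in the ``Moreover'' step. The paper stays inside the semilattice structure: it first proves $Sa_{\alpha}\cap Sb_{\beta}=S(a_{\alpha}\phi_{\alpha,\alpha\beta})\cap S(b_{\beta}\phi_{\beta,\alpha\beta})$, then computes $ba^{-1}$ explicitly in the component $Q_{\alpha\beta}$ from the (LC) data of $S_{\alpha\beta}$, matches that factorisation with the one in $\Sigma(S)$ via Remark~\ref{rem:reverse}, and finally runs a chain of equivalences to get condition (ii) of Corollary~\ref{cor:iso} (condition (i) it checks directly rather than citing the redundancy remark). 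You instead prove that $\mathcal{T}^K_S$ is \emph{intrinsic} to $S$ for any inverse semigroup $K$ in which $S$ is a left I-order and a union of $\ar^K$-classes: straightness gives $ab^{-1}=s^{-1}t$ with $s\,\ar\,t$, so membership in $\mathcal{T}^K_S$ reduces to $Ss\subseteq Sc$, and $Ss=Su$ for the normalised $u=ua^+$ with $S(ua)=Sa\cap Sb$ (your compressed justification is sound: $s=sa^+$ follows from $ab^{-1}=s^{-1}t$, and then $s^{-1}s=ab^{-1}ba^{-1}=u^{-1}u$, whence $Ks=Ku$ and Lemma~\ref{lem:unionarclasses}(iii) internalises this to $S$). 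This buys more than the theorem asks: it shows that \emph{any two} inverse semigroups in which $S$ sits as a left I-order and a union of $\ar$-classes are isomorphic over $S$ — in effect a non-bisimple version of the paper's later Lemma~\ref{lem:fullhulls}(i) — so no semilattice-specific computation is needed once the union-of-$\ar$-classes property is established. What the paper's hands-on route buys in exchange is the explicit component-wise formulas (for intersections $Sa_{\alpha}\cap Sb_{\beta}$ and for products $ba^{-1}$ in $Q$), which exhibit concretely how the (LC) structure of $S$ is built from that of the $S_{\alpha\beta}$.
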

\begin{proof} By Theorem~\ref{thm:leftampleorders},
each $S_{\alpha}\theta_{S_{\alpha}}$ is a left I-order
in its inverse hull - we identify $S_{\alpha}$ with
$S_{\alpha}\theta_{S_{\alpha}}$ and write the
inverse hull of $S_{\alpha}$ as $Q_{\a}$. By
Lemma~\ref{lem:amplestraight},
$S_{\a}$ is straight in $Q_{\a}$.

From Proposition~\ref{prop:slample}, $S$ is left ample
and as $S$ has (LC), the connecting
morphisms
are (LC)-preserving. By Theorem~\ref{thm:hullhoms}, each
$\phi_{\a,\b} (\a\geq\b)$ lifts to a morphism $\overline{\phi_{\a,\b}}:
Q_{\alpha}\rightarrow Q_{\b}$. Clearly $\overline{\phi_{\a,\a}}$ is the 
identity map and for any $\a\geq \b\geq \gamma$,
$\overline{\phi_{\a,\b}}\, \overline{\phi_{\b,\gamma}}=
\overline{\phi_{\a,\gamma}}$. Thus
$Q=\mathcal{S}\big(Y;Q_{\alpha};\overline{\phi_{\alpha,\beta}}\big)$
is a strong semilattice of inverse semigroups and $S$ is a
straight left I-order in $Q$.

It remains to show that $Q$ is isomorphic to the inverse hull
$P=\Sigma(S)$ of $S$. First, it is easy to check that $S$ is a union
of $\ar$-classes of $Q$.

For any $a,b\in S$,
\[\begin{array}{rcl}
a\,\ar^Q_S\, b&\Leftrightarrow& a,b\in S_{\a}\mbox{ for some }\alpha
\mbox{ and }a\,\ar^{Q_{\alpha}}_{S_{\alpha}}\, b\\
&\Leftrightarrow& a,b\in S_{\a}\mbox{ for some }\alpha
\mbox{ and }a\,(\ars)^{S_{\alpha}}\, b\\
&\Leftrightarrow& a\,(\ars)^S\, b\\
&\Leftrightarrow& a\theta_S\,\ar^P\, b\theta_S.\end{array}\]

Let $a_{\a},b_{\b}\in S$; we show that
\[Sa_{\a}\cap Sb_{\b}=S(a_{\a}\phi_{\a,\a\b})\cap
S(b_{\b}\phi_{\b,\a\b}).\]
Let \[x=u_{\gamma}a_{\alpha}=v_{\delta}b_{\beta}\in Sa_{\a}\cap Sb_{\b};\]
then
$\gamma\a=\delta \b=\tau$ say, so that $\tau\leq \a\b$ and
\[x=x^+x=(x^+u_{\gamma})a_{\a}=(x^+v_{\delta})b_{\b}=
(x^+u_{\gamma})(a_{\a}\phi_{\a,\tau})=
(x^+v_{\delta})(b_{\b}\phi_{\b,\tau})\]\[
=(x^+u_{\gamma})(a_{\a}\phi_{\a,\a\b})=
(x^+v_{\delta})(b_{\b}\phi_{\b,\a\b})\in S(a_{\a}\phi_{\a,\a\b})\cap
S(b_{\b}\phi_{\b,\a\b}).\]

Conversely, if
\[y=h_{\gamma}(a_{\a}\phi_{\a,\a\b})=k_{\delta}(b_{\b}\phi_{\b,\a\b})\in
S(a_{\a}\phi_{\a,\a\b})\cap S(b_{\b}\phi_{\b,\a\b})\]
then $\gamma\alpha\beta=\delta\a\b=\kappa$ say and
\[y=(y^+h_{\gamma})(a_{\a}\phi_{\a,\a\b})=
(y^+k_{\delta})(b_{\b}\phi_{\b,\a\b})=
(y^+h_{\gamma})a_{\a}=(y^+k_{\delta})b_{\b}\in Sa_{\a}\cap Sb_{\b}.\]

Now let $a,b,c\in S$. Consider $ba^{-1}\in Q$; say $b=b_{\b}$ and
$a=a_{\a}$. Then
\[ba^{-1}=(b\phi_{\b,\a\b})(a\phi_{\a,\a\b})^{-1}=x^{-1}y\]
where $x,y\in S_{\a\b}$, $x=x(b\phi_{\b,\a\b})^+,
y=y(a\phi_{\a,\a\b})^+$,
$S_{\a\b}(b\phi_{\b,\a\b})\cap S_{\a\b}(a\phi_{\a,\a\b})
=S_{\a\b}(x(b\phi_{\b,\a\b}))$ and
\[x(b\phi_{\b,\a\b})=y(a\phi_{\a,\a\b})=xb=ya.\]
Also, $x=x(b^+\phi_{\b,\a\b})=xb^+$ and similarly, $y=ya^+$.
From the proof of Proposition~\ref{prop:slample} and the argument above, we have that
$Sb\cap Sa=S(xb)$. It follows from Remark~\ref{rem:reverse}, that in $P$,
$b\theta_S(a\theta_S)^{-1}=(x\theta_S)^{-1}y\theta_S$.

Now
\[\begin{array}{rcl}
(a,b,c)\in \mathcal{T}^Q_S&\Leftrightarrow&ab^{-1}Q\subseteq c^{-1}Q\\
&\Leftrightarrow& Qba^{-1}\subseteq Qc\\
&\Leftrightarrow&Qx^{-1}y\subseteq Qc\\
&\Leftrightarrow&Qy\subseteq Qc\\
&\Leftrightarrow&Sy\subseteq Sc\\
&\Leftrightarrow& S\theta_S(y\theta_S)\subseteq S\theta_S(c\theta_S)\\
&\Leftrightarrow&P(y\theta_S)\subseteq P(c\theta_S)\\
&\Leftrightarrow& Pb\theta_S(a\theta_S)^{-1}\subseteq P(c\theta_S)\\
&\Leftrightarrow&(a\theta_S)(b\theta_S)^{-1}P\subseteq
(c\theta_S)^{-1}P\\
&\Leftrightarrow& (a\theta_S,b\theta_S,c\theta_S)\in P.\end{array}\]
From Corollary~\ref{cor:iso}, $Q$ is isomorphic to $P$ via an
isomorphism lifting $\theta_S$.
\end{proof}

From Lemma~\ref{lem:unionarclasses} and Theorem~\ref{thm:semilatticesleftample}
we have the following result of Gantos.

\begin{cor}\label{cor:gantos}
(cf. \cite[Main Theorem]{gantos:1971}) Let $S=\mathcal{S}\big(Y;S_{\alpha}\big)$ 
be a semilattice
$Y$ of right cancellative monoids $S_{\alpha}$ with identity
$e_{\alpha}$, such that 
each $S_{\alpha}$ has (LC). Suppose in addition that for any $\alpha\geq
\beta$, if $S_{\alpha}a_{\alpha}\cap S_{\alpha}b_{\alpha}=S_{\alpha}
c_{\alpha}$, then $S_{\beta}a_{\alpha}\cap S_{\beta}b_{\alpha}=S_{\beta}
c_{\alpha}$. For each $\alpha\in Y$, let $Q_{\alpha}$ be the inverse
hull of $S_{\alpha}$, so that $Q_{\alpha}$ is a bisimple inverse monoid,
and $S_{\alpha}$ is the $\ar^{Q_{\alpha}}$-class of $e_{\alpha}$.
Then $Q=\mathcal{S}\big(Y;Q_{\alpha}\big)$ is a 
semigroup of left I-quotients of $S$, such that
$E=\{ e_{\alpha}:\alpha\in Y\}$ is a subsemigroup.

Conversely, let $Q=\mathcal{S}\big(Y;Q_{\alpha}\big)$ be a semilattice
$Y$ of bisimple inverse monoids $Q_{\alpha}$,
with identity $e_{\alpha}$, such that $E=\{ e_{\alpha}:\alpha\in Y\}$ 
is a subsemigroup. Then
$S=\mathcal{S}\big(Y;R_{e_{\alpha}}\big)$ is a semilattice of
right cancellative monoids $R_{e_{\alpha}}$, such that each
$R_{e_{\alpha}}$ has (LC) and for any $\alpha\geq \beta$, if
$R_{e_{\alpha}}a_{\alpha}\cap R_{e_{\alpha}}b_{\alpha}
=R_{e_{\alpha}}c_{\alpha}$, then
$R_{e_{\beta}}a_{\alpha}\cap R_{e_{\beta}}b_{\alpha}
=R_{e_{\beta}}c_{\alpha}$.
\end{cor}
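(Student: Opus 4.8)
The plan is to read both halves of the corollary off the structure theory already developed, so that the only real work is a change of language between Gantos's concrete coset condition and the notion of an (LC)-preserving connecting morphism. Throughout I write $e_\alpha$ for the identity of the monoid indexed by $\alpha$ and use the observation that, because $e_\beta$ is a two-sided identity of the $\beta$-component, $S_\beta(a_\alpha\phi_{\alpha,\beta})=S_\beta a_\alpha$ whenever $\alpha\geq\beta$ (and similarly with $R_{e_\beta}$ in place of $S_\beta$); this single identity is the dictionary that matches the two formulations.

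For the forward implication I would first note that each $S_\alpha$, being right cancellative, is left ample and \emph{unipotent}. Hence for any $\alpha,\beta$ the product $e_\alpha e_\beta$ lies in $S_{\alpha\beta}$, and a short computation (using that each $e_\gamma$ acts as a local identity on $S_\gamma$) shows it to be idempotent; by unipotency it equals $e_{\alpha\beta}$, so $E=\{e_\alpha:\alpha\in Y\}$ is a subsemigroup. Lemma~\ref{lem:strong} then upgrades $S$ to a strong semilattice $\mathcal{S}(Y;S_\alpha;\phi_{\alpha,\beta})$ with $\phi_{\alpha,\beta}(a)=ae_\beta$, the connecting maps being $(2,1)$-morphisms automatically. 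By Corollary~\ref{cor:rcleftorders} each $Q_\alpha=\Sigma(S_\alpha)$ is a bisimple inverse monoid in which $S_\alpha$ is the $\mathcal{R}$-class of $e_\alpha$. The dictionary above identifies Gantos's extra hypothesis with the statement that every $\phi_{\alpha,\beta}$ is (LC)-preserving, so Proposition~\ref{prop:slample}(ii) gives that $S$ itself has (LC), and Theorem~\ref{thm:semilatticesleftample} produces the strong semilattice $Q=\mathcal{S}(Y;Q_\alpha;\overline{\phi_{\alpha,\beta}})$ of inverse semigroups in which $S$ is a straight left I-order. Since $e_\alpha\overline{\phi_{\alpha,\beta}}=e_\beta$, the identities again form a subsemigroup and $Q$ may be written as $\mathcal{S}(Y;Q_\alpha)$, as required.

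For the converse I run the same argument backwards. As $E$ is assumed to be a subsemigroup, Lemma~\ref{lem:strong} makes $Q=\mathcal{S}(Y;Q_\alpha;\psi_{\alpha,\beta})$ a strong semilattice with $\psi_{\alpha,\beta}(q)=qe_\beta$. By the converse half of Corollary~\ref{cor:rcleftorders} each $R_{e_\alpha}$ is a right cancellative monoid with (LC) that is a left I-order in $Q_\alpha$. Because morphisms of inverse semigroups preserve $\leq_{\mathcal{R}}$, each $\psi_{\alpha,\beta}$ maps $R_{e_\alpha}$ into $R_{e_\beta}$, so $S=\mathcal{S}(Y;R_{e_\alpha})$ is a strong subsemilattice of $Q$; moreover $S$ is a left I-order in $Q$ and meets each component $Q_\alpha$ in the single $\mathcal{R}^{Q_\alpha}$-class $R_{e_\alpha}$, and since $\mathcal{R}^Q$-classes do not cross components, $S$ is a union of $\mathcal{R}^Q$-classes. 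Lemma~\ref{lem:unionarclasses}(v) then yields that $S$ has (LC), whence Proposition~\ref{prop:slample}(ii) forces each $\phi_{\alpha,\beta}=\psi_{\alpha,\beta}|_{R_{e_\alpha}}$ to be (LC)-preserving; applying the dictionary once more turns this into exactly the coset implication in the statement.

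The genuinely delicate point, and the step I would write out with care, is this translation between (LC)-preservation and Gantos's coset condition: it depends on the right-identity reduction $R_{e_\beta}(a_\alpha\phi_{\alpha,\beta})=R_{e_\beta}a_\alpha$, and (if one prefers to invoke the ambient inverse-semigroup morphisms directly) on Lemma~\ref{lem:unionarclasses}(iv) to pass between $R_{e_\alpha}$-cosets and $Q_\alpha$-cosets. Everything else---that the restrictions are well defined, that the weak and strong semilattice notations coincide, and that $\mathcal{R}$-classes stay within components---is routine, and once the dictionary is fixed the result is an immediate consequence of Corollary~\ref{cor:rcleftorders}, Proposition~\ref{prop:slample} and Theorem~\ref{thm:semilatticesleftample}.
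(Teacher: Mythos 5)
Your proposal is correct and takes essentially the same route the paper intends: the paper derives this corollary by citing Lemma~\ref{lem:unionarclasses} and Theorem~\ref{thm:semilatticesleftample}, and your write-up fills in exactly the supporting steps that derivation relies on (Lemma~\ref{lem:strong}, Corollary~\ref{cor:rcleftorders}, Proposition~\ref{prop:slample}(ii)) together with the dictionary $S_\beta(a_\alpha\phi_{\alpha,\beta})=S_\beta a_\alpha$ translating Gantos's coset condition into (LC)-preservation. Your ``short computation'' that $E$ is a subsemigroup does go through as claimed (first show $e_\alpha e_\beta=e_\beta$ for $\alpha\geq\beta$ via associativity and unipotency, then reduce the incomparable case to this), so there is no gap.
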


\section{Bisimple inverse semigroups}\label{sec:bisimple}

Let $Q$ be a bisimple inverse semigroup and let $S$ be a subsemigroup
of $Q$ that is a union of $\ar^Q$-classes. Clearly $S$ is left
ample and is embedded as a $(2,1)$-subalgebra of $Q$ and
from a remark
in Section~\ref{sec:preliminaries}, $S$ is a left I-order in $Q$. 
It follows
from Lemma~\ref{lem:unionarclasses} that $S$ has Condition (LC).

Let $\mathbf{BIS}$ be the category with objects ordered pairs
$(Q,S)$, where $Q$ is a bisimple inverse semigroup
and $S$ is a subsemigroup of $Q$ that is a union of
$\ar^Q$-classes. A morphism in $\mathbf{BIS}$ from
$(Q,S)$ to $(P,T)$ is a semigroup morphism
$\psi$ such that $S\psi\subseteq T$. Now let
$\mathbf{LAC}$ be the category with objects left ample semigroups 
with Condition (LC) and such that $\ars\circ\el$ is universal. A morphism
in $\mathbf{LAC}$ from
$S$ to $T$ is a $(2,1)$-morphism
$\phi:S\rightarrow T$ that is
(LC)-preserving. It is easy to see that $\mathbf{BIS}$ and
$\mathbf{LAC}$ are categories.

The next lemma follows from Theorem~\ref{thm:leftampleorders}.

\begin{lem}\label{lem:hulls} Let $S$ be an object in $\mathbf{LAC}$.
Then $(\Sigma(S),S\theta_S)$ is an object in $\mathbf{BIS}$.
\end{lem}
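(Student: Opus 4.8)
The plan is to verify the two defining requirements for $(\Sigma(S),S\theta_S)$ to be an object of $\mathbf{BIS}$: first, that $\Sigma(S)$ is a bisimple inverse semigroup, and second, that $S\theta_S$ is a subsemigroup of $\Sigma(S)$ which is a union of $\ar^{\Sigma(S)}$-classes. Membership of $\mathbf{LAC}$ tells us precisely that $S$ is left ample, satisfies Condition (LC), and has $\ars\circ\el=S\times S$, so the strategy is simply to feed these three facts into the structural results of Section~\ref{sec:inversehulls}.

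First I would note that $S\theta_S$ is a subsemigroup of $\Sigma(S)$: by the construction in Section~\ref{sec:preliminaries}, $\theta_S:S\rightarrow\mathcal{I}_S$ is a $(2,1)$-embedding, and by Definition~\ref{defn:inversehull} the inverse hull $\Sigma(S)$ is the inverse subsemigroup of $\mathcal{I}_S$ generated by $\im\theta_S$, so $S\theta_S=\im\theta_S$ indeed sits inside $\Sigma(S)$. Since $S$ has Condition (LC), the final assertion of Theorem~\ref{thm:leftampleorders} applies directly and gives that $S\theta_S$ is a union of $\ar^{\Sigma(S)}$-classes, which settles the second requirement.

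For the first requirement, I would invoke Corollary~\ref{cor:bisimple}. As $S$ is left ample with Condition (LC) and satisfies $\ars\circ\el=S\times S$, condition (ii) of that corollary holds, and its equivalence with condition (i) yields that $\Sigma(S)$ is bisimple. Combining the two conclusions, $(\Sigma(S),S\theta_S)$ meets both defining conditions and is therefore an object of $\mathbf{BIS}$.

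There is no genuinely hard step here: the substantive work has already been done in Theorem~\ref{thm:leftampleorders} and Corollary~\ref{cor:bisimple}, and the lemma merely records that the hypotheses bundled into membership of $\mathbf{LAC}$ are exactly what those results demand. The only point meriting a moment's care is to match the conclusion of Theorem~\ref{thm:leftampleorders}, phrased there via $R^{\Sigma(S)}$-classes, with the union-of-$\ar^Q$-classes requirement in the definition of $\mathbf{BIS}$; these are the same condition.
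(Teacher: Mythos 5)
Your proof is correct and matches the paper's (very terse) justification, which simply attributes the lemma to Theorem~\ref{thm:leftampleorders}: you use that theorem for the union-of-$\ar^{\Sigma(S)}$-classes requirement and Corollary~\ref{cor:bisimple} for bisimplicity, exactly the intended route. If anything, your write-up is slightly more careful than the paper, since the bisimplicity of $\Sigma(S)$ genuinely needs Corollary~\ref{cor:bisimple} (equivalently Lemma~\ref{lem:unionarclasses}(vi)) and not just Theorem~\ref{thm:leftampleorders}, a point the paper leaves implicit.
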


\begin{lem}\label{lem:fullhulls} (i) Let $(Q,S), (P,T)$ be  objects in
$\mathbf{BIS}$, and suppose that $\phi:S\rightarrow T$ is
an isomorphism. Then $\phi$ lifts to an isomorphism from $Q$ to $P$.

(ii) Let $(Q,S)$ be  an object in
$\mathbf{BIS}$. Then $\psi:\Sigma(S)\rightarrow Q$ given
by $(\rho_a^{-1}\rho_b)\psi=a^{-1}b$ is an isomorphism.
\end{lem}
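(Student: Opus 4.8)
The plan is to derive both statements from Corollary~\ref{cor:iso}, which tells us exactly when an isomorphism of the ``I-order parts'' lifts to the ambient inverse semigroups. Throughout, recall that since $S$ and $T$ are unions of $\ar$-classes in the inverse semigroups $Q$ and $P$, they are left ample $(2,1)$-subalgebras, are straight left I-orders (Lemma~\ref{lem:amplestraight}), and satisfy (LC) (Lemma~\ref{lem:unionarclasses}(v)). For (i), I first note that the isomorphism $\phi$ automatically preserves $^+$: since $a^+$ is the unique idempotent $\ars$-related to $a$ and $\phi$ preserves both idempotents and $\ars$, we get $\phi(a^+)=(a\phi)^+$, so $\phi$ is a $(2,1)$-morphism. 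By the remark following Corollary~\ref{cor:iso}, condition $(i)$ of that corollary is then redundant, and so the whole of (i) reduces to verifying condition $(ii)$, namely that $\phi$ preserves $\mathcal{T}$ in both directions.

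The crux of the argument, and what I expect to be the main obstacle, is to \emph{internalise} the relation $\mathcal{T}^Q_S$ to $S$. Given $a,b\in S$, use (LC) to write $Sb\cap Sa=S(xb)$ with $xb=ya$, $xb^+=x$ and $ya^+=y$; by Remark~\ref{rem:reverse} this yields $ba^{-1}=x^{-1}y$ with $x\,\ars\, y$. Taking set-inverses, $(a,b,c)\in\mathcal{T}^Q_S$ (that is, $ab^{-1}Q\subseteq c^{-1}Q$) holds if and only if $Qba^{-1}\subseteq Qc$; and since $x\,\ars\, y$ gives $Qx^{-1}y=Qy$, this is equivalent to $Qy\subseteq Qc$, which by Lemma~\ref{lem:unionarclasses}(iii) is equivalent to the internal condition $Sy\subseteq Sc$. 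Now $\phi$, being an isomorphism of left ample semigroups, preserves principal left ideals and their intersections (so is (LC)-preserving) and carries the decomposition $Sb\cap Sa=S(xb)$ to $T(b\phi)\cap T(a\phi)=T\big((xb)\phi\big)$ with corresponding elements $x\phi,y\phi$. Hence $Sy\subseteq Sc$ if and only if $T(y\phi)\subseteq T(c\phi)$, giving $(a,b,c)\in\mathcal{T}^Q_S\Leftrightarrow(a\phi,b\phi,c\phi)\in\mathcal{T}^P_T$. Corollary~\ref{cor:iso} then delivers an isomorphism $Q\to P$ lifting $\phi$.

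For (ii), I would simply apply (i) to the $(2,1)$-isomorphism $\theta_S\colon S\to S\theta_S$. Both $(Q,S)$ and $(\Sigma(S),S\theta_S)$ are objects of $\mathbf{BIS}$, the latter by Lemma~\ref{lem:hulls}, so (i) shows that $\theta_S$ lifts to an isomorphism $\overline{\theta_S}\colon Q\to\Sigma(S)$. By the defining formula of the lift from Theorem~\ref{thm:homs}, $\overline{\theta_S}(a^{-1}b)=\rho_a^{-1}\rho_b$ whenever $a\,\ars\, b$. Its inverse is precisely the map $\psi\colon\rho_a^{-1}\rho_b\mapsto a^{-1}b$, so $\psi$ is an isomorphism; well-definedness of $\psi$ on arbitrary (not necessarily straight) representations is automatic, since $\overline{\theta_S}$ is a bijection and $S\theta_S$ is straight in $\Sigma(S)$, so every element of $\Sigma(S)$ already has a representative with $a\,\ars\, b$. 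The only genuinely nontrivial work is the internalisation of $\mathcal{T}$ in the previous paragraph; everything else is routine bookkeeping with properties of unions of $\ar$-classes that are already in hand.
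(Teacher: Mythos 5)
Your overall strategy is viable, but it is not the paper's, and it contains one genuine gap at exactly the point you flag as the crux. For the record: the paper does not prove (i) via Corollary~\ref{cor:iso}. It identifies $S$ with $T$, shows that the relations appearing in Lemma~\ref{lem:equality} (namely $\ar^Q$ on $S$ and relations of the form $a\,\el^Q\,b^{-1}$) are determined internally by $S$ and hence agree in $Q$ and $P$, concludes that $a^{-1}b\mapsto a^{-1}b$ is a well-defined bijection, and then proves multiplicativity by showing that a factorisation $bc^{-1}=x^{-1}y$ is equivalent to the internal condition $Sb\cap Sc=S(xb)$. Your plan --- verify the two conditions of Corollary~\ref{cor:iso}, with condition (i) redundant for $(2,1)$-morphisms --- is closer to the paper's proof of Theorem~\ref{thm:semilatticesleftample}, and is legitimate in principle; your reduction of part (ii) to part (i) via $\theta_S$ and Lemma~\ref{lem:hulls} coincides with the paper's.

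The gap: you invoke Remark~\ref{rem:reverse} to assert that $ba^{-1}=x^{-1}y$ holds \emph{in} $Q$, where $Sb\cap Sa=S(xb)$, $xb=ya$, $xb^+=x$ and $ya^+=y$. But Remark~\ref{rem:reverse} is a statement about the concrete partial bijections $\rho_a\in\mathcal{I}_S$: its conclusion is the identity $\rho_b\rho_a^{-1}=\rho_x^{-1}\rho_y$ in $\Sigma(S)$. It says nothing about how $ba^{-1}$ factorises in an abstract bisimple inverse semigroup $Q$ containing $S$ as a union of $\ar$-classes; transporting it to $Q$ presupposes an isomorphism $Q\rightarrow\Sigma(S)$ sending $a$ to $\rho_a$, which is precisely part (ii) of the lemma being proved, so as written the argument is circular. (The same unproved fact is needed a second time, in $P$, for the reverse implication in condition (ii) of Corollary~\ref{cor:iso}.) The fact is true, but it requires an argument inside $Q$, and supplying it is the real content of the paper's proof: by Lemma~\ref{lem:unionarclasses}(iv), $Sb\cap Sa=S(xb)$ gives $Qb\cap Qa=Q(xb)$; since a principal left ideal of an inverse semigroup is generated by a unique idempotent, $b^{-1}ba^{-1}a=(xb)^{-1}(xb)=b^{-1}x^{-1}xb$, whence
\[
ba^{-1}=b(b^{-1}ba^{-1}a)a^{-1}=bb^{-1}x^{-1}xba^{-1}=x^{-1}xbb^{-1}ba^{-1}=x^{-1}(xb)a^{-1}=x^{-1}yaa^{-1}=x^{-1}ya^+=x^{-1}y,
\]
using that idempotents commute and that $aa^{-1}=a^+$ because $S$ is a $(2,1)$-subalgebra of $Q$ by Lemma~\ref{lem:unionarclasses}(i). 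With this computation inserted in place of the citation of Remark~\ref{rem:reverse} (in both $Q$ and $P$), your verification of condition (ii) of Corollary~\ref{cor:iso}, and hence your whole proof, goes through.
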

\begin{proof} We need only prove $(i)$, for then $(ii)$ follows from
Lemma~\ref{lem:hulls}.

For ease we identify $S$ with $T$ and take $\phi$ to be the identity 
map on $S$. Notice that for any $a,b\in S$, $a\,\ar^Q\, b$ if and only if
$a\,\ars\, b$ in $S$, if and only if $a\,\ar^P\, b$. 
If $a\,\el^Q\, b^{-1}$, then $a\,\el^Q\, bb^{-1}=b^+$, so
from Lemma~\ref{lem:unionarclasses}, $a\,\el^S\, b^+$,
whence $a\,\el^P\, b^+\,\el^P\, b^{-1}$. Consequently,
$a\,\ar^Q\, b^{-1}$ if and only if $a^{-1}\,\el^Q\, b$
if and only if $a^{-1}\,\el^P\, b$ if and only if
$a\,\ar^P\, b^{-1}$.

We recall from Lemma~\ref{lem:equality} 
that for $a,b,c,d\in S$ with $a\,\ar^Q\, b$ and $c\,\ar^Q\, d$,
 $a^{-1}b=c^{-1}d$ in $Q$ if and only if
there exist $x,y\in S$ with $xa=yc$ and
$xb=yd$ and such that $a\,\ar^Q\, x^{-1}$,
$x\,\ar^Q\, y$ and $y\,\el^Q\, c^{-1}$. It follows from the
above observations that the rule that
takes $a^{-1}b\in Q$ (where $a,b\in S$ and $a\,\ars\, b$) to $a^{-1}b$ in $P$
is a bijection.

Suppose now that $bc^{-1}=x^{-1}y$ in $Q$, where
$b,c,x,y\in S$ and $x\,\ars\, y$. Notice that
$yc^+=y$. From Lemma~\ref{lem:nassersuperlemma},
$xb=yc$. Certainly $Sxb\subseteq Sb\cap Sc$. On the other hand,
if $ub=vc\in Sb\cap Sc$, then
\[ub=vc=ubc^{-1}c=ux^{-1}yc\]
so that $Qub\subseteq Qyc$ and so $Sub\subseteq Syc$. It follows
that $Sb\cap Sc=Sxb$. 

Conversely, if we are given that $Sb\cap Sc=Sxb$, where
$xb=yc$, $x\,\ars\, y$ and $yc^+=y$, then 
$b^{-1}bc^{-1}c=(xb)^{-1}xb=b^{-1}x^{-1}xb$
and so \[bc^{-1}=bb^{-1}x^{-1}xbc^{-1}=x^{-1}xbc^{-1}=
x^{-1}ycc^{-1}=x^{-1}y.\]

Suppose now that $a^{-1}b,c^{-1}d\in Q$ with
$a,b,c,d\in S$ and $a\,\ars\, b,c\,\ars\, d$. Then
$bc^{-1}=x^{-1}y$ in $Q$ with $x\,\ars\, y=yc^+$, $xb=yc$ and
$Sxb=Sb\cap Sc$. It follows that $bc^{-1}=x^{-1}y$ in $P$ also. Now
\[(a^{-1}b)(c^{-1}d)=a^{-1}(bc^{-1})d=a^{-1}(x^{-1}y)d=
(xa)^{-1}yd\]
in both $Q$ and $P$. It follows that 
the map that takes $a^{-1}b\in Q$ (where $a,b\in S$ and $a\,\ars\, b$) to 
$a^{-1}b\in P$ is an isomorphism, which clearly restricts to the
identity
on $S$.
\end{proof}

Let $F:\mathbf{LAC}\rightarrow \mathbf{BIS}$ be the functor that
takes an object $S$ of $\mathbf{LAC}$ to $(\Sigma(S),S\theta_S)$.  If $S,T$ are objects in $\mathbf{LAC}$ and
$\phi:S\rightarrow T$ is a morphism in $\mathbf{LAC}$, 
then by Theorem~\ref{thm:hullhoms}, $\phi$ lifts to a morphism
$\overline{\phi}:\Sigma(S)\rightarrow\Sigma(T)$.
More accurately, $\phi':S\theta_S\rightarrow
T\theta_T$ given by $\rho_a\phi'=\rho_{a\phi}$ lifts
to $\overline{\phi}$, so that  $(\rho_a^{-1}\rho_b)
\overline{\phi}=\rho_{a\phi}^{-1}\rho_{b\phi}$. Clearly
$S\theta_S\overline{\phi}=S\theta_S\phi'\subseteq T\theta_T$, so that
$\overline{\phi}$ is a morphism from $(\Sigma(S),S\theta_S)$
to $(\Sigma(T),T\theta_T)$ in $\mathbf{BIS}$. It is straightforward to
verify that $F$ is indeed a functor.

Let $G:\mathbf{BIS}\rightarrow \mathbf{LAC}$ 
take an object $(Q,S)$ of $\mathbf{BIS}$ to $S$ and a morphism
$\psi$ from $(Q,S)$ to $(P,T)$ in $\mathbf{BIS}$ to
$\phi=\psi|_S$. Clearly $\phi$ is a $(2,1)$-morphism.

\begin{lem}\label{lem:lc} The map $G$ defined as above is a functor
from $\mathbf{BIS}$ to $\mathbf{LAC}$.
\end{lem}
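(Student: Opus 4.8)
The plan is to verify the three requirements of a functor: that $G$ carries objects of $\mathbf{BIS}$ to objects of $\mathbf{LAC}$, that it carries morphisms to morphisms, and that it respects identities and composition. The last point I would dispose of first, since $G$ acts on arrows purely by restriction. If $\psi\colon(Q,S)\to(P,T)$ and $\chi\colon(P,T)\to(R,U)$ are composable morphisms in $\mathbf{BIS}$, then (reading maps on the right, as throughout the paper) $(\psi\chi)|_S=(\psi|_S)(\chi|_T)$, because $S\psi\subseteq T$; and $I_Q|_S=I_S$. Thus $G(\psi\chi)=G(\psi)G(\chi)$ and $G$ preserves identities, so the real content lies in the first two requirements.

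For objects, let $(Q,S)$ be an object of $\mathbf{BIS}$. As noted at the start of this section, $S$ is then left ample, embeds as a $(2,1)$-subalgebra of $Q$, is a left I-order in $Q$, and has Condition (LC) by Lemma~\ref{lem:unionarclasses}. What remains is to see that $\ars\circ\el$ is universal on $S$. Since $Q$ is bisimple and $S$ is a union of $\ar^Q$-classes, Lemma~\ref{lem:unionarclasses}(vi) gives $\el^S\circ\ars=S\times S$, and as $\el$ and $\ars$ commute on $S$ by Lemma~\ref{lem:elcircars} (equivalently, by passing to converses of symmetric relations), we obtain $\ars\circ\el=S\times S$. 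Hence $S=G(Q,S)$ is indeed an object of $\mathbf{LAC}$.

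For morphisms, take $\psi\colon(Q,S)\to(P,T)$ in $\mathbf{BIS}$ and set $\phi=\psi|_S$. Being a morphism of inverse semigroups, $\psi$ preserves inverses, hence preserves $a^+=aa^{-1}$; as $S$ and $T$ are $(2,1)$-subalgebras of $Q$ and $P$, this makes $\phi$ a $(2,1)$-morphism $S\to T$. The substantive step is that $\phi$ is (LC)-preserving. Suppose $b,c\in S$ with $Sb\cap Sc=Sw$. By Lemma~\ref{lem:unionarclasses}(iv) applied in $(Q,S)$ this is equivalent to $Qb\cap Qc=Qw$. Now $\psi$, as a morphism between the inverse semigroups $Q$ and $P$, is (LC)-preserving (via $Qe\cap Qf=Qef$ for idempotents, as remarked before Theorem~\ref{thm:hullhoms}), so $P(b\phi)\cap P(c\phi)=P(w\phi)$. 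Finally, since $T$ is a union of $\ar^P$-classes, Lemma~\ref{lem:unionarclasses}(iv) applied in $(P,T)$ internalises this to $T(b\phi)\cap T(c\phi)=T(w\phi)$. Thus $\phi$ is (LC)-preserving and hence a morphism in $\mathbf{LAC}$.

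The main obstacle is precisely this (LC)-preservation of $\phi$, and the device that overcomes it is the repeated use of Lemma~\ref{lem:unionarclasses}(iv), which lets one pass between the principal left ideals $Sx$ of the left ample semigroup and the ideals $Qx$ of the ambient inverse semigroup, in both $(Q,S)$ and $(P,T)$. Once the (LC) datum $Sb\cap Sc=Sw$ is lifted into $Q$, the (already observed) fact that inverse-semigroup morphisms are automatically (LC)-preserving transports it to $P$, and Lemma~\ref{lem:unionarclasses}(iv) brings the conclusion back down to $T$; everything else is routine.
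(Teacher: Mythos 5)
Your proof is correct and follows essentially the same route as the paper: the key step, showing $\phi=\psi|_S$ is (LC)-preserving, is handled exactly as in the paper's proof, by using Lemma~\ref{lem:unionarclasses}(iv) to pass from $Sa\cap Sb=Sc$ to $Qa\cap Qb=Qc$, invoking the fact that morphisms of inverse semigroups are automatically (LC)-preserving, and then applying Lemma~\ref{lem:unionarclasses}(iv) again in $(P,T)$. The additional verifications you include (objects land in $\mathbf{LAC}$ via Lemma~\ref{lem:unionarclasses}(vi) and Lemma~\ref{lem:elcircars}, preservation of $^+$, identities and composition) are exactly the points the paper treats as already established or routine, and you verify them correctly.
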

\begin{proof} We need only check that if $\psi$ is a morphism from
$(Q,S)$ to $(P,T)$ in $\mathbf{BIS}$, then $\phi=\psi_S$ is 
(LC)-preserving.

Let $a,b,c\in S$ be such that $Sa\cap Sb=Sc$. By 
Lemma~\ref{lem:unionarclasses}, $Qa\cap Qb=Qc$, so that as
$\psi$ is certainly an (LC)-morphism, $P(a\psi)\cap P(b\psi)=
P(c\psi)$ and again by Lemma~\ref{lem:unionarclasses},
and using the fact $\phi=\psi|_S$, 
$T(a\phi)\cap T(b\phi)=T(c\phi)$.

\end{proof}
 We now show that $FG$ and $GF$ are naturally isomorphic to 
$I_{\mathbf{LAC}}$ and $I_{\mathbf{BIS}}$, respectively. 

Let $S$ be any object in $\mathbf{LAC}$; then
\[SFG=(\Sigma(S),S\theta_S)G=S\theta_S\]
and $\theta_S:SI_{\mathbf{LAC}}\rightarrow SFG$ is an isomorphism
in $\mathbf{LAC}$. If $\phi:S\rightarrow T$ is a morphism in
$\mathbf{LAC}$, then
\[\phi FG=\overline{\phi}G=\overline{\phi}|_{S\theta_S}=\phi',\]
where $\rho_a\phi'=\rho_{a\phi}$. For any $s\in S$ we have that
\[s\theta_S\phi'=\rho_s\phi'=\rho_{s\phi}=s\phi\theta_T\]
so that

\begin{center}

\begin{pspicture}(-4,0)(12,5)
\psset{linewidth=1.5pt}
\psset{nodesep=1pt}

\rput(1,1){\rnode{T}{$T$}}
\rput(1,4){\rnode{S}{$S$}}
\rput(7,1){\rnode{T'}{$TFG$}}
\rput(7,4){\rnode{S'}{$S FG$}}
\rput(2,2.5){\rnode{U}{$$}}

\rput(6.5,2.5){\rnode{U'}{$$}}
\ncline{->}{S}{T}\Aput{$\phi$}
\ncline{->}{S'}{T'}\Aput{$\phi FG$}
\ncline{->}{S}{S'}\Aput{${\theta_S}$}
\ncline{->}{T}{T'}\Aput{${\theta_T}$}
\end{pspicture}\end{center}
commutes. 

On the other hand, for any object $(Q,S)$ of
$\mathbf{BIS}$, 
\[(Q,S)GF=SF=(\Sigma(S), S\theta_S)\]
and for a morphism $\psi:(Q,S)\rightarrow (P,T)$ in
$\mathbf{BIS}$,
\[\psi GF=\psi|_SF=\overline{\psi|_S},\]
where for $\rho_a^{-1}\rho_b\in \Sigma(S)$, 
$(\rho_a^{-1}\rho_b)\overline{\psi|_S}=\rho^{-1}_{a\psi}\rho_{b\psi}$.
By Lemma~\ref{lem:fullhulls}, $\mu_{(Q,S)}:(Q,S)
\rightarrow (\Sigma(S),S\theta_S)$ given by
$(a^{-1}b)\mu_{(Q,S)}=\rho_a^{-1}\rho_b$ is an isomorphims, which
clearly lies in $\mathbf{BIS}$. We have that for any 
$a^{-1}b\in Q$,
\[(a^{-1}b)\mu_{(Q,S)}(\psi GF)=
(\rho_a^{-1}\rho_b)(\psi GF)
=\rho_{a\psi}^{-1}\rho_{b\psi}=
\big((a\psi)^{-1}b\psi\big)\mu_{(P,T)}=
(a^{-1}b)\psi \mu_{(P,T)},\]
so that

\begin{center}\begin{pspicture}(-4,0)(12,5)
\psset{linewidth=1.5pt}
\psset{nodesep=1pt}

\rput(1,1){\rnode{T}{$(P,T)$}}
\rput(1,4){\rnode{S}{$(Q,S)$}}
\rput(7,1){\rnode{T'}{$(P,T) GF$}}
\rput(7,4){\rnode{S'}{$(Q,S)GF$}}
\rput(2,2.5){\rnode{U}{$$}}

\rput(6.5,2.5){\rnode{U'}{$$}}
\ncline{->}{S}{T}\Aput{$\psi$}
\ncline{->}{S'}{T'}\Aput{$\psi FG$}
\ncline{->}{S}{S'}\Aput{${\mu_{(Q,S)}}$}
\ncline{->}{T}{T'}\Aput{${\mu_{(P,T)}}$}
\end{pspicture}
\end{center}
commutes. 

We now have the main result of this section.

\begin{thm}\label{thm:catequiv} The categories $\mathbf{BIS}$
and $\mathbf{LAC}$ are equivalent.

\end{thm}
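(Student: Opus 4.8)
The plan is to assemble the equivalence of $\mathbf{BIS}$ and $\mathbf{LAC}$ from the functors $F$ and $G$ and the natural isomorphisms already established immediately above the statement. The standard way to prove an equivalence of categories is to exhibit a pair of functors $F\colon\mathbf{LAC}\to\mathbf{BIS}$ and $G\colon\mathbf{BIS}\to\mathbf{LAC}$ together with natural isomorphisms $GF\cong I_{\mathbf{LAC}}$ (equivalently, we showed $FG$ above) and $FG\cong I_{\mathbf{BIS}}$. Since the text has already defined $F$ and $G$ and verified that they are functors (via Theorem~\ref{thm:hullhoms} and Lemma~\ref{lem:lc}), the remaining task is purely to collect the two naturality diagrams into the formal statement.

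First I would invoke the functor $F$, shown to be well defined on objects by Lemma~\ref{lem:hulls} (which places $(\Sigma(S),S\theta_S)$ in $\mathbf{BIS}$) and on morphisms by Theorem~\ref{thm:hullhoms} (which lifts each (LC)-preserving $(2,1)$-morphism $\phi$ to $\overline{\phi}$). Dually, $G$ is a functor by Lemma~\ref{lem:lc}. Then I would point to the first commuting square displayed above, which exhibits the family $\{\theta_S\}$ as a natural transformation $I_{\mathbf{LAC}}\Rightarrow FG$; since each $\theta_S$ is a $(2,1)$-isomorphism onto $S\theta_S$, this family is a natural isomorphism. Likewise the second commuting square exhibits $\{\mu_{(Q,S)}\}$ as a natural transformation $I_{\mathbf{BIS}}\Rightarrow GF$, and each $\mu_{(Q,S)}$ is an isomorphism in $\mathbf{BIS}$ by Lemma~\ref{lem:fullhulls}. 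Having produced natural isomorphisms in both directions, the definition of equivalence of categories is satisfied and the theorem follows.

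In short, the proof is essentially a one-line citation of the work already done: all the substantive content lives in Lemmas~\ref{lem:hulls}, \ref{lem:fullhulls} and \ref{lem:lc}, in Theorem~\ref{thm:hullhoms}, and in the two naturality squares verified just before the statement. The only obstacle worth flagging is checking that the natural transformations are genuinely natural (i.e. that the squares commute) and that their components are isomorphisms; but both of these have already been carried out explicitly in the passage preceding the theorem, so the proof reduces to assembling these facts. I would write it as follows.

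\begin{proof}
By Lemmas~\ref{lem:hulls} and~\ref{lem:lc} and Theorem~\ref{thm:hullhoms}, the assignments $F$ and $G$ are functors between $\mathbf{LAC}$ and $\mathbf{BIS}$. The family of maps $\theta_S$, as $S$ ranges over the objects of $\mathbf{LAC}$, gives a natural isomorphism $I_{\mathbf{LAC}}\Rightarrow FG$, by the first commuting diagram above and the fact that each $\theta_S$ is a $(2,1)$-isomorphism onto $S\theta_S$. Dually, the family of maps $\mu_{(Q,S)}$, as $(Q,S)$ ranges over the objects of $\mathbf{BIS}$, gives a natural isomorphism $I_{\mathbf{BIS}}\Rightarrow GF$, by the second commuting diagram above together with Lemma~\ref{lem:fullhulls}. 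Hence $\mathbf{BIS}$ and $\mathbf{LAC}$ are equivalent.
\end{proof}
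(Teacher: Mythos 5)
Your proposal is correct and follows exactly the paper's own route: the theorem is stated as the culmination of the preceding material, with $F$ and $G$ being functors (Lemma~\ref{lem:hulls}, Theorem~\ref{thm:hullhoms}, Lemma~\ref{lem:lc}) and the families $\{\theta_S\}$ and $\{\mu_{(Q,S)}\}$ giving the natural isomorphisms $I_{\mathbf{LAC}}\Rightarrow FG$ and $I_{\mathbf{BIS}}\Rightarrow GF$ via the two commuting squares and Lemma~\ref{lem:fullhulls}. Your write-up simply makes explicit the assembly that the paper leaves implicit, so there is nothing to correct.
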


As a corollary, we have the classical result due to  Clifford and Warne, 
made explicit in \cite[Chapter X]{petrich:1984}. To state this
result, we let $\mathbf{BIM}$ be the full subcategory of
$\mathbf{BIS}$ consisting of all pairs $(Q,R_1)$, where
$Q$ is a bisimple inverse monoid and 
$R_1$ is the $\mathcal{R}$-class of the identity of $Q$,
and we let $\mathbf{RCC}$ be the full subcategory of $\mathbf{LAC}$
consisting of right cancellative monoids with the (LC) condition.
Since $F|_{\mathbf{RCC}}:\mathbf{RCC}\rightarrow \mathbf{BIM}$
and 
$G|_{\mathbf{BIM}}:\mathbf{BIM}\rightarrow \mathbf{RCC}$,
we deduce the following.

\begin{cor}\cite{clifford:1953,warne:1964,petrich:1984}\label{cor:rightcancellative} The categories
$\mathbf{BIM}$ and $\mathbf{RCC}$ are equivalent.
\end{cor}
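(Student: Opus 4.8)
The plan is to derive this corollary directly from the established equivalence $\mathbf{BIS}\simeq\mathbf{LAC}$ of Theorem~\ref{thm:catequiv}, by restricting the functors $F$ and $G$ to the relevant full subcategories and checking that they land where we want. Since $\mathbf{BIM}$ and $\mathbf{RCC}$ are \emph{full} subcategories, there is no morphism-level work to do: a morphism between objects of $\mathbf{RCC}$ is simply a morphism in $\mathbf{LAC}$ between those objects, and similarly for $\mathbf{BIM}$ in $\mathbf{BIS}$. Thus the whole content reduces to verifying that $F$ carries objects of $\mathbf{RCC}$ to objects of $\mathbf{BIM}$, that $G$ carries objects of $\mathbf{BIM}$ to objects of $\mathbf{RCC}$, and that the natural isomorphisms exhibiting $FG\cong I_{\mathbf{BIS}}$ and $GF\cong I_{\mathbf{LAC}}$ restrict to natural isomorphisms on the subcategories.

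First I would check the object correspondence for $F$. Let $S$ be an object of $\mathbf{RCC}$, i.e.\ a right cancellative monoid with Condition (LC). Such an $S$ is left ample with a single idempotent, and $\ars$ is universal on $S$, so in particular $\ars\circ\el=S\times S$; hence $S$ is indeed an object of $\mathbf{LAC}$. By Corollary~\ref{cor:rcleftorders}, $\Sigma(S)$ is a bisimple inverse monoid and $S\theta_S$ is precisely the $\ar^{\Sigma(S)}$-class $R_1$ of the identity. Therefore $SF=(\Sigma(S),S\theta_S)=(\Sigma(S),R_1)$ is an object of $\mathbf{BIM}$, confirming $F|_{\mathbf{RCC}}:\mathbf{RCC}\rightarrow\mathbf{BIM}$.

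Next I would check the object correspondence for $G$. Let $(Q,R_1)$ be an object of $\mathbf{BIM}$, so $Q$ is a bisimple inverse monoid and $R_1$ is the $\ar$-class of its identity. By the converse part of Corollary~\ref{cor:rcleftorders}, $R_1$ is a right cancellative monoid with Condition (LC); thus $(Q,R_1)G=R_1$ is an object of $\mathbf{RCC}$, giving $G|_{\mathbf{BIM}}:\mathbf{BIM}\rightarrow\mathbf{RCC}$.

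Finally, the natural isomorphisms $\theta_S:S\rightarrow SFG$ and $\mu_{(Q,S)}:(Q,S)\rightarrow(\Sigma(S),S\theta_S)$ constructed before Theorem~\ref{thm:catequiv} are defined uniformly and are isomorphisms in $\mathbf{LAC}$ and $\mathbf{BIS}$ respectively; restricting the index of the natural transformation to objects of $\mathbf{RCC}$ and $\mathbf{BIM}$ yields natural isomorphisms $F|_{\mathbf{RCC}}\,G|_{\mathbf{BIM}}\cong I_{\mathbf{BIM}}$ and $G|_{\mathbf{BIM}}\,F|_{\mathbf{RCC}}\cong I_{\mathbf{RCC}}$, so the restricted functors are mutually quasi-inverse and the two categories are equivalent. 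I do not anticipate a genuine obstacle here; the only point requiring care is the bookkeeping that $F$ and $G$ respect the subcategory boundaries, and this is exactly what Corollary~\ref{cor:rcleftorders} supplies. Everything else is formal, flowing from the fact that the subcategories in question are full.
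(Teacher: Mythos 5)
Your proposal is correct and follows essentially the same route as the paper: the paper likewise deduces the corollary by observing that the equivalence functors $F$ and $G$ of Theorem~\ref{thm:catequiv} restrict to the full subcategories, i.e.\ $F|_{\mathbf{RCC}}:\mathbf{RCC}\rightarrow \mathbf{BIM}$ and $G|_{\mathbf{BIM}}:\mathbf{BIM}\rightarrow \mathbf{RCC}$, with the object-level verification resting on Corollary~\ref{cor:rcleftorders} exactly as you use it. Your write-up merely makes explicit the fullness and natural-isomorphism bookkeeping that the paper leaves implicit.
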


Finally, we remark on the connection between this material and
Reilly's
RP-systems
\cite{reilly:1968}. Reilly defined an RP-system $(R,P)$ to be a 
`right partial semigroup' $R$ together with a subsemigroup $P$
of $R$ satisfying certain conditions. He showed that for
any RP-system $(R,P)$ there exists a bisimple inverse semigroup 
$Q(R,P)$,
some $\mathcal{R}$-class of which is isomorphic (under the appropriate
notion) to $R$. Conversely, for any $\mathcal{R}$-class $R$ of a bisimple
inverse
semigroup, there is a subsemigroup $P$ of $R$ such that $(R,P)$ is an 
RP-system and $Q(R,P)$ is isomorphic to $Q$. We remark that if $S$ is an
object in $\mathbf{LAC}$, then any $\ars$-class of $S$ is
an $\mathcal{R}$-class of a bisimple inverse semigroup and so
an RP-system. Conversely, if $(R,P)$ is an RP-system, let
$Q$ be a bisimple inverse semigroup into which it embeds as an 
$\mathcal{R}$-class $R_q$. Let $S$ be the smallest subsemigroup of
$Q$ containing $R_q$ that is a union of $\mathcal{R}$-classes of $Q$. 
Then $S$ is left ample, and a left I-order in $Q$, so that
$S$ is an object in $\mathbf{LAC}$.


\begin{thebibliography}{999}
\bibitem{cegarra:private} A. Cegarra, {\em private communication}.

\bibitem{petrich:1987} A. Cherubini and M. Petrich,
`The inverse hull of right cancellative semigroups',
{\em J. Algebra} {\bf 111} (1987), 74--113.
\bibitem{clifford:1953} A.H. Clifford, `A class of d-simple semigroups',
{\em American J. Maths.} {\bf 75} (1953), 547--556.

\bibitem{cliffordpreston}A.H. Clifford and G.B. Preston, {\em The algebraic theory
of semigroups}, Mathematical Surveys 7, Vols. 1 and 2, American Mathematical
Society 1961.

\bibitem{easdowngould:1996} D. Easdown and V. Gould, `Commutative orders',
{\em Proc. Royal Soc. Edinburgh}, {\bf 126A} (1996), 1201-1216. 

	\bibitem{fountain:1979} J.B. Fountain, `Adequate semigroups',
 \textit{Proc.\ Edinb.\ Math.\ Soc. (2)} \textbf{22} (1979),
113-125.

\bibitem{fountainpetrich:1985} J.B. Fountain and M. Petrich, 
`Brandt semigroups
of quotients', {\em Math. Proc. Cambridge Phil. Soc.} {\bf 98} (1985), 
413--426.

\bibitem{fountainpetrich:1986} J.B. Fountain and M. Petrich, `Completely 0-simple semigroups
of quotients', {\em J. Algebra} {\bf 101} (1986), 365-402.

\bibitem{gantos:1971} R.L Gantos, `Semilattices of bisimple inverse
semigroups', {\em Quart. J. Math. Oxford} {\bf 22} (1971), 379--393.

\bibitem{ghroda:primitive} N. Ghroda, `Left I-orders in primitive
inverse
semigroups', {\em in preparation}.
\bibitem{gould:2003} V. Gould, `Semigroups of left quotients: existence,
uniqueness and locality', {\em J. Algebra} {\bf 267} (2003), 514--541.

\bibitem{gould:notes} V. Gould  `Notes on restriction semigroups and related
structures', {\em notes available at}
 {\em http://www-users.york.ac.uk/$\sim$varg1}.




\bibitem{gouldkambites:2005} V. Gould and M. Kambites, `Faithful functors
from cancellative categories to cancellative monoids, with an
application to ample semigroups', {\em I.J.A.C.} {\bf 15} (2005),
683--698.


\bibitem{howie:1995}{J.\ M.\ Howie}, {\em Fundamentals of semigroup theory},
Oxford University Press, (1995).

\bibitem{lawson:1999} M.V. Lawson, `Constructing inverse semigroups from category actions',  {\em J. of Pure and Applied Algebra} {\bf 137} (1999), 57--101. 


\bibitem{mcalister:1976} D.B. McAlister, `One-to-one partial right translations of a right cancellative semigroup', {\em J. Algebra} {\bf 43} (1976), 231--251.

\bibitem{nivat:1970} M. Nivat and J.-F. Perrot, 
'Une g\'{e}n\'{e}ralisation du mono\"{i}de bicyclique', 
{\em C. R. Acad. Sci. Paris
S\'{e}r. AB} { \bf 271} (1970), A824-A827.

\bibitem{petrich:1984} M. Petrich, {\em Inverse semigroups}, John Wiley \&
Sons,
1984.


\bibitem{reilly:1968} N.R. Reilly, `Bisimple inverse semigroups',
{\em Trans. American Math. Soc.} {\bf 132} (1968), 101--114.
\bibitem{reilly:1971} N.R. Reilly, `Congruences on bisimple inverse
 semigroups in terms of RP-systems', {\em Proc. London Math. Soc.}
{\bf 23} (1971), 99--127.

\bibitem{warne:1964} R.J. Warne, `Homomorphisms of d-simple inverse semigroups with identity' {\em Pacific J. Math.} {\bf 14} (1964), 1111--1122.
\end{thebibliography}
\end{document}